\documentclass[reqno, a4paper, 10pt]{amsart}
\usepackage{amssymb}
\usepackage{amsmath}
\usepackage[mathscr]{euscript}
\usepackage{hyperref}
\usepackage{graphicx}
\usepackage[normalem]{ulem}

\usepackage{amssymb}
\usepackage{hyperref}
\RequirePackage[dvipsnames]{xcolor} 
\definecolor{halfgray}{gray}{0.55} 
\definecolor{webgreen}{rgb}{0,0.5,0}
\definecolor{webbrown}{rgb}{.6,0,0} \hypersetup{%
  colorlinks=true, linktocpage=true, pdfstartpage=3,
  pdfstartview=FitV,%
  breaklinks=true, pdfpagemode=UseNone, pageanchor=true,
  pdfpagemode=UseOutlines,%
  plainpages=false, bookmarksnumbered, bookmarksopen=true,
  bookmarksopenlevel=1,%
  hypertexnames=true,
  pdfhighlight=/O,
  urlcolor=webbrown, linkcolor=RoyalBlue,
  citecolor=webgreen, 
  pdftitle={},%
  pdfauthor={},%
  pdfsubject={2000 MAthematical Subject Classification: Primary:},%
  pdfkeywords={},%
  pdfcreator={pdfLaTeX},%
  pdfproducer={LaTeX with hyperref}%
}

\usepackage{enumitem}

\usepackage{orcidlink}

\theoremstyle{plain}
\numberwithin{equation}{section}
\newtheorem{theorem}{Theorem}
\newtheorem{proposition}{Proposition}
\newtheorem{lemma}{Lemma}
\newtheorem{corollary}{Corollary}
\newtheorem{definition}{Definition}
\theoremstyle{remark}
\newtheorem{remark}{Remark}
\newtheorem{example}{Example}
\renewcommand{\epsilon}{\varepsilon}
\renewcommand{\phi}{\varphi}

\DeclareMathOperator{\Ima}{Im}
\DeclareMathOperator{\Ker}{Ker}

\DeclareMathOperator{\diag}{diag}
\def\N{\mathbb{N}}
\def\cA{\EuScript{A}}
\def\cB{\EuScript{B}}
\def\R{\mathbb{R}}

\def\Id{\text{\rm Id}}
\def\Z{\mathbb{Z}}

\begin{document}

\title [Smooth linearization of nonautonomous dynamics]{Smooth linearization of nonautonomous dynamics under general dichotomic behaviour}

\author[Lucas Backes]{Lucas Backes \orcidlink{0000-0003-3275-1311}}
\address[Lucas Backes]
{Department of Mathematics\\
    Universidade Federal do Rio Grande do Sul\\
    Av. Bento Goncalves 9500, CEP 91509-900, Porto Alegre, RS, Brazil}
\email[L. Backes]{lucas.backes@ufrgs.br}

\author[Davor Dragi\v cevi\'c]{Davor Dragi\v cevi\'c  \orcidlink{0000-0002-1979-4344}}
\address[Davor Dragi\v cevi\'c]
{Faculty of Mathematics\\
    University of Rijeka\\
    Rijeka 51000, Croatia}
\email[D.~Dragi\v cevi\'c]{ddragicevic@math.uniri.hr}

\author[Wenmeng Zhang]{Wenmeng Zhang \orcidlink{0000-0002-1176-4140}}
\address[Wenmeng Zhang]
{School of Mathematical Sciences\\
    Chongqing Normal University\\
    Chongqing 401331, P.R.China}
\email[W.~M.~Zhang]{wmzhang@cqnu.edu.cn}

\date{\today}

\keywords{Linearization, dichotomy spectrum, $\mu$-dichotomy, exponential dichotomy}
\subjclass[2020]{Primary: 37C15, 37D25; Secondary: 37B55}

\begin{abstract}
The main purpose of this paper is to formulate new conditions for smooth linearization of nonautonomous systems with discrete and continuous time. Our results assume that the linear part admits a very general form of dichotomy known as $\mu$-dichotomy and that the associated $\mu$-dichotomy spectrum exhibits appropriate spectral gap and spectral band conditions. We observe that our notion of $\mu$-dichotomy encompasses the classical notions of exponential, polynomial and logarithmic dichotomies as very particular cases. In particular, our result is in sharp contrast to  most of the previous results in the literature which  assumed that the linear part admits an exponential dichotomy. Our techniques exploit the relationship between $\mu$-dichotomy and exponential dichotomy via a suitable reparametrization of time.
\end{abstract}

\maketitle

\section{Introduction} 

One of the most important notions in the theory of dynamical systems is that of (uniform) hyperbolicity. It plays a fundamental role in many situations, like in the study of stability, chaos, and bifurcations. The counterpart of this notion  in the case of nonautonomous dynamical systems is the \emph{exponential dichotomy}, a concept whose study dates back to the pioneering work of Perron \cite{Per-30}. In the discrete time case, we say that the nonautonomous dynamics defined by the sequence $(A_n)_{n\in \Z}$ of invertible linear operators on $\R^d$ admits an exponential dichotomy
if there exist constants ${K},\lambda>0$ and a sequence of projections $(P_n)_{n\in \Z}$ on $\R^d$ such that for every $n,m\in \Z$ with $m>n$, $A_nP_n=P_{n+1}A_n$ and 
\begin{align}\label{ED-00}
\| A_{m-1}\cdots A_n P_n\| \le  {K}e^{-\lambda(m-n)}\;\text{ and }\;
\| A^{-1}_n\cdots A^{-1}_{m-1} Q_m\| \le  {K} e^{-\lambda (m-n)}
\end{align}
where $Q_m=\Id-P_m$.
As with hyperbolicity for autonomous systems, the notion of an exponential dichotomy has broad applications, including the study of invariant manifolds, invariant foliations, and normal forms \cite{BV-book,Pot-book,SY-book}, and there exists extensive literature regarding this notion for both finite and infinite-dimensional systems (see, for example, \cite{CL-JDE95,HZ-JDE04,P-JDE84,ZZ-JFA16}).

Even though the notion of an exponential dichotomy is of great importance in the study of nonautonomous systems, there are many important classes of systems that do not fit this framework. 
In fact, due to the flexibility of the nonautonomous setting, it is easy to construct broad classes of systems which admit a splitting into stable and unstable directions, but with non necessarily exponential rates of contraction and expansion (see, for instance, Example \ref{example: main theo discrete} below). In this paper, we are interested in systems with this type of general behavior. 
Given any strictly increasing sequence $ \mu =(\mu_n)_{n\in \Z}$ of positive numbers with $\mu_0=1$ and such that 
\[
\lim_{n\to +\infty}\mu_n=+\infty \; \text{ and } \;  \lim_{n\to -\infty}\mu_n=0,
\]
we say that $(A_n)_{n\in \Z}$ admits a \emph{$\mu$-dichotomy} if there exist constants and a sequence of projections as in the case of an exponential dichotomy such that, instead of satisfying \eqref{ED-00}, they satisfy 
\begin{align*}
\| A_{m-1}\cdots A_n P_n\| \le  K \left(\frac{\mu_m}{\mu_n}\right)^{-\lambda} \;\text{ and }\;
\| A^{-1}_n\cdots A^{-1}_{m-1} Q_m\| \le   K \left(\frac{\mu_m}{\mu_n}\right)^{-\lambda}
\end{align*}
for $m\geq n$. Note that by taking $\mu_n=e^n$, $n\in \Z$, we recover the notion of an exponential dichotomy. Other particular cases that are widely explored in the literature correspond to the case where the sequence $\mu$ has a polynomial or logarithmic behavior. We will expand on this in the following sections.

In what follows, we are interested in studying the problem of smooth linearization of  nonautonomous systems whose linear part admits a $\mu$-dichotomy. We will consider systems with both discrete and continuous time.  This problem consists of finding a time dependent smooth change of coordinates sending the solutions of the nonlinear system into the solutions of the linear one. As a preliminary to our main results, we formulate smooth linearization results for one-sided systems whose linear part admits an exponential dichotomy; these are derived from existing results for two-sided systems \cite{DZZ,DZZ20}. Then, by combining these findings with a suitable time reparametrization (as explored in \cite{DS}), we obtain the desired results in the $\mu$-dichotomic context. A key aspect for the validity of our results is that the associated \emph{$\mu$-dichotomy spectrum} exhibits appropriate spectral gap and spectral band conditions.
The importance of our results stems from their broad generality. In fact, we are able to treat in a unified manner several important classes of systems for which results of this type were previously unavailable. We would like to point out the relevant works~\cite{BV, LX} dealing with nonautonomous linearization under $\mu$-dichotomies. However, these works do not deal with  smooth or differentiable linearization.

The linearization problem described above is among the most fundamental ones in the theory of dynamical systems and has a long history (of which we do not attempt to provide a complete survey). In the context of autonomous dynamics, the problem of establishing sufficient conditions under which conjugacy exhibits higher regularity was first considered in the pioneering works of Sternberg~\cite{Sternberg2, Sternberg58}. In the 1950s, he proved that for each $r\in \mathbb{N}$, there is an integer $k\in \mathbb{N}$ such that $C^k$ hyperbolic diffeomorphisms satisfying some non-resonance conditions up to order $k$ admit a $C^r$ linearization.
We also mention the fundamental works of Hartman \cite{HartPAMS60} and Grobman \cite{Grobman}, who proved independently in the 1960s that $C^1$ hyperbolic diffeomorphisms can be $C^0$ linearized near the hyperbolic ﬁxed point. 
This result shows that if one is only interested in the case $r=0$ in Sternberg's theorem, one can take $k=1$ and, moreover, no non-resonance conditions are required (see Section \ref{sec: lin exp dich} for details). We note that infinite-dimensional versions of this result were obtained independently by Palis \cite{Palis} and Pugh \cite{Pugh}.

Palmer \cite{Palmer} established the first nonautonomous version of the Grobman-Hartman theorem for continuous-time dynamics, while a discrete-time version was subsequently formulated in \cite{AW}. Both results were obtained by assuming that the associated linear system admits a uniform exponential dichotomy.
The problem of \emph{smooth} linearization for nonautonomous systems has been considered only recently. To our knowledge, the first results in this direction were obtained in \cite{CMR,CR} for uniformly and nonuniformly exponentially stable systems. Moreover, in \cite{CDS}, the authors established a Sternberg-type theorem in the setting of a uniform exponential dichotomy with continuous time.
More relevant to our context are the works \cite{DZZ,DZZ20} (see also~\cite{D1}) where $C^1$, differentiable (at 0) and H\"older linearization results were obtained in the setting of a nonuniform exponential dichotomy under some spectral gap and spectral band conditions. As previously emphasized, these results will play a central role in our proofs.

Finally, we mention  recent results \cite{BD, CHR, CJ, J} dealing with the smooth linearization of nonautonomous systems in the absence of  any kind of dichotomy, non-resonance or spectral gap assumptions. However, as observed in~\cite{BD},  such results in the setting of~\cite{DZZ} (and consequently also in the setting of the present paper)  can fail to be applicable or could yield weaker results.

This paper is organized as follows. The first three sections are dedicated to discrete-time dynamical systems. More precisely, in Section \ref{sec: pol dich}, we introduce the notions of strong $\mu$-dichotomy and the strong $\mu$-dichotomy spectrum, exploring the relationship between these concepts and exponential dichotomy via time reparametrization. In Section \ref{sec: lin exp dich}, we present results concerning $C^1$, differentiable (at 0), and Hölder linearization under the assumption of an exponential dichotomy on the half-line. Section \ref{sec: lin pol} is devoted to establishing the $C^1$, differentiable (at 0), and Hölder linearization results under the assumption of a $\mu$-dichotomy. At the end of Section \ref{sec: lin pol}, we briefly explain how these results can be extended to the infinite-dimensional setting. Finally, in Section \ref{sec: continuous time case}, we provide continuous-time versions of the results established in Section \ref{sec: lin pol}. 
Appendix \ref{sec: appendix a} serves as an addendum to the proof of the main results in \cite{DZZ}. In this section, we strengthen the properties of the conjugacies presented in the aforementioned work and provide an argument that was missing in the original paper.

\vspace{0.1in}
\textbf{Note:} We observe that a previous version of this paper was written in the particular case when the linear part of the system admits a (nonuniform) polynomial dichotomy. That version is available on ArXiv (\href{https://arxiv.org/abs/2210.04804}{https://arxiv.org/abs/2210.04804}) and will remain as a permanent preprint.

\section{$\mu$-dichotomy and dichotomy spectrum} \label{sec: pol dich} 
In this section, we introduce the notions of $\mu$-dichotomy and $\mu$-dichotomy spectrum, followed by a discussion of their fundamental properties. We start by fixing some notation.

Let $\N$ denote the set of all natural numbers, $\Z$ the set of integers, $\Z^+$ the set of nonnegative integers, and $\Z^-$ the set of nonpositive integers. Given $J\in \{\Z, \Z^-, \Z^+\}$ and a sequence $(A_n)_{n\in J}$ of invertible operators on $\R^d$, let us consider the associated nonautonomous linear system
\begin{equation}\label{eq: LE}
    x_{n+1}=A_nx_n, \quad n,n+1\in J
\end{equation}
whose evolution family is given by
\begin{equation}\label{eq: cocycle}
\cA(m,n):=\begin{cases}
A_{m-1} \cdots A_n, & m>n; \\
\Id, & m=n;\\
A_m^{-1} \cdots A_{n-1}^{-1}, & m<n,
\end{cases}
\end{equation}
where $\Id$ denotes the identity operator on $\R^d$.

\subsection{Growth rates and $\mu$-dichotomy for discrete time dynamics}
Let $ \mu =(\mu_n)_{n\in J}$ be a strictly increasing sequence of positive numbers with $\mu_0=1$ and such that 
\begin{itemize}
\item for $J=\Z^{+}$,
\begin{equation}\label{eq: growth +}
     \lim_{n\to +\infty}\mu_n=+\infty;
     \end{equation}
\item for $J=\Z ^-$,
\begin{equation}\label{eq: growth -}
     \lim_{n\to -\infty}\mu_n=0;
     \end{equation}
\item for $J=\Z$, conditions \eqref{eq: growth +} and \eqref{eq: growth -} are satisfied.
\end{itemize}
Any sequence $\mu$ satisfying these properties is called a \emph{growth rate}.

\begin{definition}\label{def: mu dichot}
Let $(A_n)_{n\in J}$ be a sequence of invertible operators on $\R^d$. We say that $(A_n)_{n\in J}$ admits a \emph{$\mu$-dichotomy} if there exists a family of projections $(P_n)_{n\in J}$ on $\R^d$ and constants $K,\lambda >0$ such that
\begin{itemize}
\item  for $n,n+1\in J$,
\begin{equation}\label{pro}
A_nP_n=P_{n+1}A_n;
\end{equation}
\item for $m,n\in J$ with $m\geq n$,
\begin{equation}\label{pd1}
\|\cA(m,n)P_n\|\leq K \left(\frac{\mu_m}{\mu_n}\right)^{-\lambda} \quad \text{ and }\quad 
\|\cA(n,m)Q_m\|\leq K \left(\frac{\mu_m}{\mu_n}\right)^{-\lambda} \quad 
\end{equation}
where $Q_n:=\Id -P_n$.
\end{itemize}
In addition, if there exists $a\geq \lambda$ such that
\begin{equation}\label{bg}
\|\cA(m,n)\|\leq K \left(\frac{\mu_m}{\mu_n}\right)^{a} \quad \text{ and } \quad 
\|\cA(n,m)\|\leq K \left(\frac{\mu_m}{\mu_n}\right)^{a} 
\end{equation}
for $m\geq n$, we say that $(A_n)_{n\in J}$ admits a \emph{strong $\mu$-dichotomy}.
\end{definition}

\begin{remark}\label{remark: particular cases of dich}
We observe that the notion of $\mu$-dichotomy generalizes several well-known notions of dichotomy. For example, in the case where $J=\N$, by taking $\mu_n=e^{n}$, $n\in \N$, we recover the notion of \emph{exponential dichotomy} (see \cite{Cop-book}); by taking $\mu_n=1+n$, $n\in \N$, we recover the notion of \emph{polynomial dichotomy} (see \cite{BS1,DSS, D}); by taking $\mu_n= \ln(e+n)$, $n\in \N$, we recover the notion of \emph{logarithmic dichotomy} (see \cite{Silva}).
\end{remark}

\begin{remark}
 We emphasize that several versions of the $\mu$-dichotomy for both discrete- and continuous-time dynamics have already appeared in the literature, and various properties of these systems have already been investigated. For instance, there are studies addressing invariant manifolds \cite{BS-2, BS-1, Pan}, the shadowing property \cite{BD1}, spectral properties \cite{Bac25, JG, Silva2}, admissibility \cite{BD2, Silva}, reducibility \cite{CJ, Silva}, expansivity~\cite{D, EPR} and roughness \cite{NP, CZQ, Chu} for systems exhibiting this type of behavior.
\end{remark}

\subsection{Time rescaling} \label{sec: time rescaling}

We now recall a time rescaling procedure described in \cite{DS}. Given a growth rate $\mu=(\mu_n)_{n \in \Z^+}$, we can associate to it the continuous, strictly increasing function $\widetilde \mu\colon [0,+\infty) \to [1,+\infty )$ given by
\[
\widetilde \mu(t)=
\begin{cases}
	\mu_n & \text{ if } \ t=n\\
	\mu_n+(t-n)(\mu_{n+1}-\mu_n) & \text{ if } \ n<t<n+1
\end{cases}, \quad n \in \Z^+.
\]
One can easily check that this is an invertible function and that its inverse ${\widetilde \mu}^{-1}\colon [1,+\infty) \to [0,+\infty)$ is the continuous, strictly increasing function given by
\[
\widetilde \mu^{-1}(t)=
\begin{cases}
	n & \text{ if } \ t=\mu_n\\
	n+\dfrac{t-\mu_n}{\mu_{n+1}-\mu_n} & \text{ if } \ \mu_n<t<\mu_{n+1}
\end{cases}, \quad n \in \Z^+.
\]
Using these maps, we associate with the system~\eqref{eq: LE} the family of invertible linear operators $(B^{\mu}_n)_{n \in \mathbb{N}}$ given by 
\begin{equation}\label{eq:B-mu}
B^{\mu}_n=\cA\left(\lfloor \widetilde\mu^{-1}(e^n) \rfloor +1, \  \lfloor \widetilde\mu^{-1}(e^{n-1}) \rfloor +1 \right), \quad \forall n \in \mathbb{N}
\end{equation}
and consider the nonautonomous linear system
\begin{equation}\label{eq:Lin-B}
	y_{n+1}=B^{\mu}_n \, y_n, \quad n \in \mathbb{N}. 
\end{equation}
It is easy to see that the evolution family associated to \eqref{eq:Lin-B} is given by 
\[
\begin{split}
\cB ^{\mu}(m,n)
 =\cA(\lfloor\widetilde\mu^{-1}(e^{m-1})\rfloor +1,\,\lfloor\widetilde\mu^{-1}(e^{n-1})\rfloor +1).
\end{split}
\]

We have the following relationship between systems \eqref{eq: LE} and \eqref{eq:Lin-B}.
\begin{theorem}[Corollary 3.3 of \cite{DS}] \label{theo: equiv exp mu dich}
Let $\mu$ be a growth rate satisfying 
\begin{equation}\label{eq: grbound}
	\frac{\mu_{n+1}}{\mu_n}\le \theta, \quad \text{for all} \quad n \in \N,
\end{equation} 
for some $\theta\ge 1$ and consider a sequence of invertible linear operators $(A_n)_{n\in \N}$ on $\R^d$ for which there exist $K, a>0$ such that \eqref{bg} is satisfied. Then, there exists $K'>0$ such that
\begin{equation*}
\|\cB ^\mu(m,n)\|\leq K' \left(\frac{\mu_m}{\mu_n}\right)^{a} \quad \text{ and } \quad 
\|\cB ^\mu(n,m)\|\leq K' \left(\frac{\mu_m}{\mu_n}\right)^{a} 
\end{equation*}
for $m\geq n$. Moreover, the following assertions are equivalent:
\begin{itemize}
    \item[i)] $(A_n)_{n\in \N}$ admits a $\mu$-dichotomy;
    \item[ii)] $(B^\mu_n)_{n\in \N}$ admits an exponential dichotomy
\end{itemize}
    
\end{theorem}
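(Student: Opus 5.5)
The approach is to compare the original time $n$ with the rescaled time through the indices
\[
k_n:=\lfloor \widetilde\mu^{-1}(e^{n-1})\rfloor+1,\qquad n\in\N,
\]
so that $B^\mu_n=\cA(k_{n+1},k_n)$ and $\cB^\mu(m,n)=\cA(k_m,k_n)$. First I will show that $\mu_{k_n}$ is comparable to $e^{n-1}$, with constants depending only on $\theta$. By definition, $k_n-1\le \widetilde\mu^{-1}(e^{n-1})<k_n$. Since $\widetilde\mu$ is increasing and interpolates $\mu$, this gives $\mu_{k_n-1}\le e^{n-1}<\mu_{k_n}$. Combined with \eqref{eq: grbound}, this yields
\[
e^{n-1}<\mu_{k_n}\le \theta e^{n-1},\qquad\text{hence}\qquad \theta^{-1}e^{m-n}\le \frac{\mu_{k_m}}{\mu_{k_n}}\le \theta e^{m-n}.
\]
We also need that $(k_n)$ is nondecreasing with $k_1=1$ (since $\widetilde\mu^{-1}(1)=0$) and $k_n\to\infty$. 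Consequently every $j\in\N$ lies in a block $[k_q,k_{q+1})$ for some $q$, some blocks possibly being empty. On each such block, $\mu_j/\mu_{k_q}\le \mu_{k_{q+1}}/\mu_{k_q}\le \theta e$.

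For the bounded growth claim, I plug $k_m,k_n$ into \eqref{bg}. This gives $\|\cB^\mu(m,n)\|\le K(\mu_{k_m}/\mu_{k_n})^a\le K\theta^a e^{a(m-n)}$, and the same bound for $\cB^\mu(n,m)$. This is bounded growth for the rescaled system in the exponential sense, which is the form actually needed afterwards. I read the stated bound in terms of the rescaled time: the ratio of $\mu$-values at the rescaled indices is comparable to $e^{m-n}$.

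For (i)$\Rightarrow$(ii), I set $\widetilde P_n:=P_{k_n}$. Invariance follows from \eqref{pro}, and \eqref{pd1} gives
\[
\|\cB^\mu(m,n)\widetilde P_n\|\le K(\mu_{k_m}/\mu_{k_n})^{-\lambda}\le K\theta^\lambda e^{-\lambda(m-n)},
\]
and similarly for the unstable part.

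For (ii)$\Rightarrow$(i), let $(\widetilde P_q)$ be the exponential dichotomy projections of $(B^\mu_n)$. For $j\in[k_q,k_{q+1})$ I define $P_j:=\cA(j,k_q)\widetilde P_q\cA(k_q,j)$. The main step I expect to be delicate is checking that this is consistent and invariant, i.e.\ $A_jP_j=P_{j+1}A_j$, also when $j+1=k_{q'}$ for a later $q'$ because intermediate blocks are empty. This follows from $\cB^\mu(q',q)\widetilde P_q=\widetilde P_{q'}\cB^\mu(q',q)$ and the cocycle property.

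Then, for $m\ge n$ with $n\in[k_q,k_{q+1})$ and $m\in[k_p,k_{p+1})$, $p\ge q$, I factor
\[
\cA(m,n)P_n=\cA(m,k_p)\,\cB^\mu(p,q)\widetilde P_q\,\cA(k_q,n).
\]
The two outer factors are bounded by $K(\theta e)^a$ using \eqref{bg} and the block estimate. The middle factor is at most $K'e^{-\lambda(p-q)}$, and
\[
e^{p-q}\ge \theta^{-1}\frac{\mu_{k_p}}{\mu_{k_q}}\ge \theta^{-1}(\theta e)^{-1}\frac{\mu_m}{\mu_n}.
\]
This yields $\|\cA(m,n)P_n\|\le C(\mu_m/\mu_n)^{-\lambda}$. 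The unstable estimate is obtained symmetrically, using $\cA(n,m)Q_m=\cA(n,k_q)\cB^\mu(q,p)\widetilde Q_p\cA(k_p,m)$.

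The only real obstacle is this bookkeeping with repeated or skipped values of $k_n$. Its resolution is that every comparison constant depends only on $\theta$, $K$ and $a$.
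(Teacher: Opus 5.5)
Your proof is correct. The paper gives no argument of its own here; it only quotes \cite[Corollary 3.3]{DS}. Your block comparison is the standard time-rescaling argument behind that result. Its ingredients are the indices $k_n=\lfloor\widetilde\mu^{-1}(e^{n-1})\rfloor+1$ with $e^{n-1}<\mu_{k_n}\le\theta e^{n-1}$, and the transported projections $P_j:=\cA(j,k_q)\widetilde P_q\cA(k_q,j)$ on nonempty blocks $[k_q,k_{q+1})$. Your invariance check across a boundary $j+1=k_{q+1}$ is right: it uses $\cB^\mu(q',q)\widetilde P_q=\widetilde P_{q'}\cB^\mu(q',q)$ with $q'=\max\{s: k_s=k_{q+1}\}$. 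It also handles repeated values of $k_n$, since $\cB^\mu(q',q)=\Id$ forces $\widetilde P_q=\widetilde P_{q'}$.

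You should say explicitly that the first displayed bound is false as printed, rather than reading around it. Here is a counterexample:
\begin{itemize}
\item Take $\mu_n=1+n$ (so $\theta=2$), $d=1$ and $A_n=(n+2)/(n+1)$.
\item Then $\cA(m,n)=\mu_m/\mu_n$ for $m\ge n$, so \eqref{bg} holds with $K=a=1$.
\item But $k_n=\lfloor e^{n-1}\rfloor$, so $\cB^\mu(m,1)=(\lfloor e^{m-1}\rfloor+1)/2$, which is not bounded by any multiple of $\mu_m/\mu_1=(m+1)/2$.
\end{itemize}
The correct statement is the one you actually prove, namely $\|\cB^\mu(m,n)\|,\ \|\cB^\mu(n,m)\|\le K\theta^a e^{a(m-n)}$ for $m\ge n$. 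This is also all the paper uses later, both for \eqref{supp} and to supply a \emph{strong} exponential dichotomy of $\mathbb B$ to Theorem~\ref{t0}.

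A trivial detail: the estimate $\mu_{k_1}=\mu_1\le\theta\mu_0$ needs \eqref{eq: grbound} at $n=0$. You may assume this after enlarging $\theta$.
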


\begin{remark}
    It is easy to see that exponential, polynomial and logarithmic growth rates (recall Remark \ref{remark: particular cases of dich}), which are the most common growth rates explored in the literature, satisfy condition \eqref{eq: grbound}. 
\end{remark}

\subsection{$\mu$-dichotomy spectra} 
Let $\mu$ be a growth rate that satisfies \eqref{eq: grbound} and consider a sequence $\mathbb A=(A_n)_{n\in \N}$ of invertible linear operators on $\R^d$. Moreover, associated with $\mathbb A$, let us consider the sequence $\mathbb B=(B^\mu_n)_{n\in \N}$ given by \eqref{eq:B-mu}.

\begin{definition} \label{def: mu spect}
We define $\Sigma_{\mu D, \mathbb A}$ as the set of all $\tau \in \R$ with the property that the system
\begin{equation*}\label{lin1}
    x_{n+1}=\left (\frac{\mu_{n+1}}{\mu_n}\right )^{-\tau }A_n x_n   \quad n\in \N
\end{equation*}
does not admit a strong $\mu$-dichotomy. $\Sigma_{\mu D, \mathbb A}$ is called the \emph{strong $\mu$-dichotomy spectrum} of $\mathbb A$.
\end{definition}

\begin{remark}\label{aaa}
Similarly, we can introduce $\Sigma_{\mu D, \mathbb A}$ for two-sided sequences of invertible operators $(A_n)_{n\in \Z}$.
\end{remark}

An important particular case of strong $\mu$-dichotomy spectrum is the one given by the sequence $\mu_n=e^n$ for $n\in \N$. We are going to denote it by $\Sigma_{ED, \mathbb A}$ and call it the \emph{strong exponential dichotomy spectrum}

The following result shows that $\Sigma_{\mu D, \mathbb A}$ and $\Sigma_{ED, \mathbb B}$ are strongly connected.
\begin{theorem}\label{theo: DS-equal spect}
 Suppose that there exists $K,a>0$ such that $\mathbb A$ satisfies \eqref{bg}.
 Then
\[
\Sigma_{\mu D, \mathbb A}=\Sigma_{ED, \mathbb B}.
\]
\end{theorem}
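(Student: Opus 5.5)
We show that for every $\tau\in\R$, $\tau\notin\Sigma_{\mu D,\mathbb A}$ if and only if $\tau\notin\Sigma_{ED,\mathbb B}$. Fix $\tau$ and consider the rescaled system $A^\tau_n:=(\mu_{n+1}/\mu_n)^{-\tau}A_n$. Its evolution family telescopes:
\[
\cA^\tau(m,n)=\left(\frac{\mu_m}{\mu_n}\right)^{-\tau}\cA(m,n).
\]
First, $\mathbb A^\tau=(A^\tau_n)$ satisfies the bounded growth condition \eqref{bg} with exponent $a+|\tau|$, since $\mu_m/\mu_n\ge1$ for $m\ge n$. Hence Theorem \ref{theo: equiv exp mu dich} applies to $\mathbb A^\tau$, so $\mathbb A^\tau$ admits a $\mu$-dichotomy iff its time-rescaled system $\mathbb B^\tau:=((A^\tau)^\mu_n)_n$ admits an exponential dichotomy. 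Moreover, $\mathbb B^\tau$ has bounded growth in the $\mu$-scale by the first part of that theorem.

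The key step is to compare $\mathbb B^\tau$ with the rescaled system $(e^{-\tau}B^\mu_n)_n$ appearing in the definition of $\Sigma_{ED,\mathbb B}$. Write $k_n:=\lfloor\widetilde\mu^{-1}(e^{n-1})\rfloor+1$. Then
\[
(A^\tau)^\mu_n=\left(\frac{\mu_{k_{n+1}}}{\mu_{k_n}}\right)^{-\tau}B^\mu_n .
\]
By the definition of $\widetilde\mu^{-1}$ we have $\mu_{k_n-1}\le e^{n-1}<\mu_{k_n}$. Together with \eqref{eq: grbound} this gives $e^{n-1}<\mu_{k_n}\le\theta e^{n-1}$. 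Consequently $c_n:=\mu_{k_n}/e^{n}$ is bounded above and below by positive constants, and
\[
(A^\tau)^\mu_n=\frac{c_{n+1}^{-\tau}}{c_n^{-\tau}}\,e^{-\tau}B^\mu_n .
\]
So $\mathbb B^\tau$ is kinematically similar to $(e^{-\tau}B^\mu_n)$ via the bounded, boundedly invertible scalar change of variables $y_n\mapsto c_n^{-\tau}y_n$. Concretely, the evolution families differ by the factor $(c_m/c_n)^{-\tau}$, which lies in a fixed compact subset of $(0,\infty)$. Exponential dichotomy estimates, with the same projections, are preserved under such multiplication at the cost of enlarging $K$. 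The same holds for the bounded growth estimates, since $\mu_m/\mu_n$ is comparable to $e^{m-n}$ along this subsequence.

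Finally, we match the "strong" conditions in the two spectra. A strong $\mu$-dichotomy of $\mathbb A^\tau$ is a $\mu$-dichotomy plus \eqref{bg}, and \eqref{bg} holds automatically, as shown above. A strong exponential dichotomy of $(e^{-\tau}B^\mu_n)$ is an exponential dichotomy plus exponential bounded growth. The latter holds automatically, because $B^\mu$ satisfies bounds of the form $K'(\mu_{k_{m}}/\mu_{k_n})^{a}\lesssim e^{a(m-n)}$. Chaining the equivalences gives: $\tau\notin\Sigma_{\mu D,\mathbb A}$ iff $\mathbb A^\tau$ has a $\mu$-dichotomy iff $\mathbb B^\tau$ has an exponential dichotomy iff $(e^{-\tau}B^\mu_n)$ has a strong exponential dichotomy iff $\tau\notin\Sigma_{ED,\mathbb B}$.

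I expect the main obstacle to be the index bookkeeping in the second paragraph. One must check carefully that the floor and $+1$ in \eqref{eq:B-mu} indeed give $\mu_{k_n}\in(e^{n-1},\theta e^{n-1}]$, including at $n=1$, where $k_1=1$. One must also interpret the bounds of Theorem \ref{theo: equiv exp mu dich} for $\cB^\mu$ correctly as exponential in $m-n$.
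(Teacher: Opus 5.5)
Your proposal is correct and follows the route the paper takes. The paper combines Theorem~\ref{theo: equiv exp mu dich}, applied to each rescaled system, with the argument of \cite[Theorem 4.1]{DS}; you do the same, comparing the reparametrized rescaled system with $(e^{-\tau}B^\mu_n)_n$ through the uniformly bounded scalar factors coming from $e^{n-1}<\mu_{k_n}\le\theta e^{n-1}$, and checking that the bounded-growth parts of both strong dichotomies hold automatically (for $\cB^\mu$ directly from \eqref{bg}, which confirms the exponential reading of that bound), which is exactly the modification the paper flags when it switches to strong dichotomies.
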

\begin{proof}
    This result follows using Theorem \ref{theo: equiv exp mu dich} and proceeding as in the proof of \cite[Theorem 4.1]{DS}. Note that the difference between Theorem \ref{theo: DS-equal spect} and \cite[Theorem 4.1]{DS} relies on the fact that here we are considering a different notion of spectrum. More precisely, here we use the notion of \emph{strong} $\mu$-dichotomy instead of just $\mu$-dichotomy to define it.
\end{proof}

In particular, the previous result allows us to describe the structure of $\Sigma_{\mu D, \mathbb A}$. More precisely, suppose that there exists $K,a>0$ such that $\mathbb A$ satisfies \eqref{bg}. Then, by Theorem~\ref{theo: equiv exp mu dich} we find that $\mathbb B$ admits an exponential dichotomy and 
\begin{equation}\label{supp}
    \sup_{n\in \mathbb Z}\|B^\mu_n\|<+\infty \quad \text{and}\quad \sup_{n\in \mathbb Z}\|(B^\mu_n)^{-1}\|<+\infty. 
\end{equation}
Proceeding as in \cite[Theorem 3.4]{AS} or as in the classical work of Sacker and Sell \cite{SS-JDE78}, we get that there exist $1\le r\le d$ and $a_1\le b_1 <\ldots <a_r \le b_r$ such that
\begin{equation}\label{EDB}
\Sigma_{ED, \mathbb B}=\bigcup_{i=1}^r [a_i, b_i].
\end{equation}
Thus, by Theorem \ref{theo: DS-equal spect} we conclude that 
\begin{equation}\label{EDB-mu}
\Sigma_{\mu D, \mathbb A}=\bigcup_{i=1}^r [a_i, b_i].
\end{equation}

\section{Smooth linearization under exponential dichotomy} \label{sec: lin exp dich}

It is well known that  $C^0$ conjugacy  in general  does not preserve important dynamical properties such as  characteristic directions and  smoothness of invariant manifolds. Preserving these properties requires at least $C^1$ conjugacy. In the 1970s, an effort was made by Belitskii in~\cite{Bel73} to establish a $C^1$ linearization of $C^2$ hyperbolic diffeomorphisms on $\mathbb{R}^d$ under certain second-order non-resonance conditions. This result requires much weaker conditions on smoothness and non-resonance than the Sternberg's theorem in the $C^1$ case. Moreover, Hartman's example given in~\cite{HartPAMS60} shows that Belitskii's non-resonance conditions cannot be avoided.

Since the 1990s, two independent research directions emerged. In one direction, the  goal was to extend Belitskii's result to the infinite-dimensional setting, while the other direction was concerned with   differentiable   (at the fixed point $0$) linearization in the absence of non-resonance conditions. 
Concerning the first direction of research, we mention the work \cite{ZZJ}, which established  $C^1$ linearization result on  Banach spaces under appropriate  spectral gap and  spectral band conditions for the linear part. This result was further extended to the nonautonomous setting in \cite{DZZ}. In the second direction, van Strien~\cite{Stri-JDE90} claimed  the linearization result for  $C^2$ diffeomorphisms on $\R^d$ without any non-resonance conditions. The conclusion was that there exists  a conjugacy  which is simultaneously differentiable at 0 and H\"older
continuous near 0 (weaker than $C^1$ smoothness). However,  van Strien's proof was found to be incorrect (see~\cite{Ray-JDE98}). In~\cite{DZZ20}, the authors gave a correct proof of 
van Strien's result by using  different methods.

In what follows, we present two linearization results for nonautonomous dynamics under the assumption that the linear part admits an exponential dichotomy. The first result (see Theorem~\ref{t0}) gives conditions under which the conjugacies are $C^1$, while the second result (see Theorem~\ref{t1}) concerns the case when the conjugacies are differentiable at $0$ and locally H\"older
continuous. In contrast to the existing result in the literature, the novelty is that Theorems~\ref{t0} and~\ref{t1} are concerned with one-sided dynamics (which will make them applicable to the problem of linearization under $\mu$-dichotomic behavior).

\subsection{$C^1$ linearization under exponential dichotomy} 
We start by recalling that a strong exponential dichotomy is just a strong $\mu$-dichotomy, as described in Definition \ref{def: mu dichot}, with the growth rate $\mu$ given by $\mu_n=e^n$ for $n\in \Z^+$.

\begin{theorem}\label{t0}
Let $\mathbb B=(B_n)_{n\in \Z^+}$ be a sequence of invertible operators on $\R^d$ that admits a strong exponential dichotomy.
Suppose that $\Sigma_{ED, \mathbb B}$  has the form \eqref{EDB}, with
\begin{equation}\label{sp1}
a_1\le b_1 <\ldots <a_k\le b_k <0<a_{k+1}\le b_{k+1}<\ldots <a_r\le b_r,
\end{equation}
for some $1\leq k\leq r\leq d$ and
\begin{equation}\label{sp2}
\begin{cases}
a_{k+1}-b_k>\max \{b_r, -a_1\},\\
b_i-a_i\le -b_k \text{ for } 1\le i\le k,\\
b_i-a_i\le a_{k+1} \text{ for } k+1\le i \le r.\\
\end{cases}
\end{equation}
Moreover, let $f_n\colon \R^d \to \R^d$, $n\in \Z^+$, be a sequence of $C^1$ maps such that
\begin{equation}\label{fzero}
f_n(0)=0 \quad \text{and} \quad Df_n(0)=0,
\end{equation}
and
\begin{equation}\label{fm}
\|Df_n(x)v\| \le \eta \|v\| \; \text{ and } \; \|(Df_n(x)-Df_n(y))v\| \le L\|x-y\| \cdot \|v\|,
\end{equation}
with constants $L, \eta>0$ for $x, y, v\in \R^d$. Then, provided that $\eta$ is sufficiently small,  there exists a sequence of $C^1$-diffeomorphisms $h_n\colon \R^d\to \R^d$, $n\in \Z^+$,  such that
\begin{itemize}
\item for $n\in \Z^+$,
$
h_{n+1} \circ (B_n+f_n)=B_n\circ h_n;
$
\item there exist $M, \rho>0$ such that
\begin{equation*}
\|D h_n(x)v\|\le M\|v\| \; \text{ and }\; \|Dh_n^{-1}(x)v\| \le M\|v\|, \quad \forall n\in \Z^+,
\end{equation*}
for all $v\in \R^d$ and $x,y\in \mathbb{R}^d$ satisfying $\|x\|\le \rho$.
\end{itemize}
\end{theorem}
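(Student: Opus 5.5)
We derive Theorem~\ref{t0} from the two-sided $C^1$ linearization result of \cite{DZZ} by extending the one-sided data to $n\in\Z$ without changing the spectrum, and then restricting the resulting conjugacies to $n\in\Z^+$.

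\textbf{Step 1: extend the linear part.} The natural extension keeps the dynamics on the negative half-line frozen. Let $E_n$ be the block-diagonal spectral splitting $\R^d=E^1_0\oplus\cdots\oplus E^r_0$ at time $0$. Each $E^i_0$ corresponds to the spectral interval $[a_i,b_i]$ and is obtained from the intersections of the ranges of the dichotomy projections of the shifted systems $e^{-\tau}B_n$ for $\tau$ in the gaps. Define
\[
B_n := \bigoplus_{i=1}^r e^{c_i}\,\Id_{E^i_0}\quad\text{for } n<0,
\]
where $c_i\in[a_i,b_i]$ is fixed. We then extend the dichotomy projections by $P_n=P_0$ for $n<0$.

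\textbf{Step 2: verify the extension has the right spectrum.} We must show that $\widehat{\mathbb B}=(B_n)_{n\in\Z}$ admits a strong exponential dichotomy and that its spectrum is contained in $\bigcup_i[a_i,b_i]$. The argument runs over each gap $\tau\in(b_i,a_{i+1})$.
\begin{itemize}
\item The system $e^{-\tau}B_n$ on $n\ge0$ has an exponential dichotomy whose stable space at time $0$ is $E^1_0\oplus\cdots\oplus E^i_0$. This space is independent of $\tau$ within the gap, since for one-sided systems the stable space is unique.
\item For $n<0$, on this stable space the rates $e^{c_j-\tau}$ with $j\le i$ are contracting, while on the complementary space $E^{i+1}_0\oplus\cdots\oplus E^r_0$ they are expanding.
\item Gluing the two half-line dichotomies at $n=0$ yields a dichotomy on $\Z$. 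The estimates for $m\ge0\ge n$ follow by composing the estimates on each side.
\item The bounded-growth condition \eqref{bg} is immediate.
\end{itemize}
Hence the spectrum of the extension lies in the union of the intervals. Any gap of the extended spectrum that is not a gap of \eqref{EDB} can only enlarge the set of gaps, and conditions \eqref{sp1}--\eqref{sp2} are monotone under shrinking the spectrum. We expect they persist, possibly after passing to a refined spectral decomposition in which each band still satisfies the band inequalities.

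\textbf{Step 3: extend the nonlinearity and apply \cite{DZZ}.} Set $f_n:=0$ for $n<0$. Then \eqref{fzero} and \eqref{fm} hold for all $n\in\Z$ with the same $\eta$ and $L$. The $C^1$ theorem of \cite{DZZ}, in the uniform case and in the strengthened form of Appendix~\ref{sec: appendix a}, now applies. It provides $C^1$ diffeomorphisms $h_n$, $n\in\Z$, satisfying $h_{n+1}\circ(B_n+f_n)=B_n\circ h_n$ together with uniform bounds on $Dh_n$ and $Dh_n^{-1}$ on the ball of radius $\rho$. Restricting to $n\in\Z^+$ gives the conclusion, since the conjugacy equation for $n\ge0$ involves only the original data.

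\textbf{Main obstacle.} The delicate point is Step~2: we need the two-sided spectrum to satisfy exactly the gap and band hypotheses \eqref{sp2} that \cite{DZZ} requires. Those hypotheses are stated in terms of the spectral intervals, so we must check that no new spectrum appears and that the invariant splitting matches the one used in \cite{DZZ}. If the frozen constant extension causes difficulties, we would instead take $B_n$ for $n<0$ to be the constant operator $B_0$ conjugated onto the spectral splitting with rescaled blocks. A further alternative is to check directly that the Lyapunov–Perron/invariant-foliation construction of \cite{DZZ} only uses forward-time estimates for the conjugacy on $n\ge0$.
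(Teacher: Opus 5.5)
Your route is the paper's: extend $\mathbb B$ to $n<0$ by a constant operator acting as $e^{c_i}\Id$ on a splitting adapted to the stable filtration, glue the two half-line dichotomies of $(e^{-\tau}B_n)$ at $n=0$ for every $\tau\notin\bigcup_i[a_i,b_i]$ (the outer ranges $\tau<a_1$ and $\tau>b_r$ are handled the same way as the interior gaps), set $f_n=0$ for $n<0$, apply Theorem~\ref{Ap}, and restrict to $\Z^+$.

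\textbf{The step you flag as the main obstacle.} It closes with a one-line observation, which is what the paper uses. Restricting a strong exponential dichotomy of $(e^{-\tau}B_n)_{n\in\Z}$ to $\Z^+$ gives one for $(e^{-\tau}B_n)_{n\in\Z^+}$, so $\Sigma_{ED,\mathbb B}\subset\Sigma_{ED,\widehat{\mathbb B}}$. Together with the inclusion you already proved, the two spectra are equal. Hence \eqref{sp1}--\eqref{sp2} transfer verbatim. You need no monotonicity argument, no refined decomposition and none of your fallback constructions.

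\textbf{A correction to Step~1.} On a half-line only the nested stable spaces $\{0\}=S_1\subset S_2\subset\cdots\subset S_{r+1}=\R^d$ at time $0$ are canonical; the kernels of the projections are not. So the $E^i_0$ cannot be obtained by intersecting ranges, and there is no canonical ``spectral splitting'' at time $0$. Take the $E^i_0$ to be arbitrary complements, $S_{i+1}=S_i\oplus E^i_0$, exactly as the paper does with the $V_i$; any choice works.

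For the same reason, ``$P_n=P_0$ for $n<0$'' must refer to a re-chosen projection. For each gap, take the projection at time $0$ onto $E^1_0\oplus\cdots\oplus E^i_0$ along $E^{i+1}_0\oplus\cdots\oplus E^r_0$, and transport it along $\Z^+$. The original dichotomy projection of $\mathbb B$ at time $0$ need not commute with the constant extension. This re-choice is what the gluing result of \cite{BDV0} takes care of.
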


\begin{remark}
Theorem~\ref{t0} is a consequence of Theorem \ref{Ap} whose proof will be given in Appendix A. In fact, Theorem \ref{Ap} deals with the case of $n\in \Z$, while Theorem \ref{t0} deals with the case of $n\in \Z^+$. Moreover, observe that $0\notin \Sigma_{ED, \mathbb B}$ since $\mathbb B$ admits a strong exponential dichotomy.
\end{remark}

\begin{proof} 
Set $f_n:=0$ for all $n<0$. Then, \eqref{fm} holds for every $x, y\in \R^d$ and $n\in \Z$. Choose $c_1, \ldots, c_{r+1} \in \R$ such that
\[
c_1<a_1\le b_1<c_2 <a_2\le b_2 <\ldots < a_r \le b_r <c_{r+1}.
\]
For each $i\in \{1, 2, \ldots, r+1\}$, set
\[
S_i:=\bigg \{ x\in \R^d: \sup_{n\ge 0}\left ( \frac{1}{e^{c_in}} \| \mathcal B(n,0)x\|\right )<+\infty \bigg \}.
\]
Then, for each $i\in \{1,2, \ldots, r+1\}$, the sequence $(\frac{1}{e^{c_i}}B_n)_{n\in \Z^+}$ admits a strong exponential dichotomy with respect to projections $\tilde P_n^i$ for $n\in \Z^+$, where $\Ima \tilde P_0^i=S_i$ (see, for example, \cite[Proposition 3.3]{Bac25}). Moreover, the following holds true:
\begin{itemize}
\item $S_i$ does not depend on the particular choice of $c_i$ (e.g. \cite[Lemma 3.5]{Bac25});
\item $S_1=\{0\} \subsetneq S_2\ldots \subsetneq S_r \subsetneq S_{r+1}=\R^d$ (e.g. \cite[Lemma 3.6]{Bac25}).
\end{itemize}
Next, choose subspaces $V_2, \ldots, V_{r}$ of $\R^d$ such that
\[
\R^d=S_r\oplus V_r, \ S_r=S_{r-1} \oplus V_{r-1}, \ldots, S_3=S_2\oplus V_2.
\]
Hence,
\[
\R^d=V_r\oplus V_{r-1} \oplus \ldots \oplus V_2 \oplus S_2.
\]
Define an operator $B$ on $\R^d$ by
\begin{align}\label{BZ-}
B\rvert_{S_2}=e^{a_1}\Id_{S_2}, \ B\rvert_{V_2}=e^{a_2}\Id_{V_2}, \ldots, B\rvert_{V_r}=e^{a_r}\Id_{V_r},
\end{align}
where $\Id_V$ denotes the identity map on a subspace $V\subset \R^d$. Moreover, 
set $B_n:=B$ for $n<0$ and consider $\mathbb B':=(B_n)_{n\in \Z}$.

Let $\Sigma_{ED, \mathbb B'}$ be defined as in Definition \ref{def: mu spect} with $\mu_n=e^n$, $n\in \Z$ (recall Remark \ref{aaa}).
We now claim that $\Sigma_{ED, \mathbb B}=\Sigma_{ED, \mathbb B'}$. As mentioned above, $S_i$ does not depend on $c_i$. Thus, for any
\[
\tilde c_1 \in (-\infty, a_1), ~~\tilde c_i\in (b_{i-1}, a_{i}) \text{ for } i\in \{2,\ldots ,r\}, \text{ and }  \tilde c_{r+1}\in (b_r, \infty),
\]
we find that the sequence $(\frac {1}{e^{\tilde c_i}}B_n)_{n\in \Z^+}$ also admits a strong exponential dichotomy with projections $\tilde P_n^i$ for all $n\in \Z^+$ such that
$\Ima \tilde P_0^i=S_{i}$. Moreover, it is easy to see from \eqref{BZ-} that the sequence $\left(\frac {1}{e^{\tilde c_i}}B_n\right)_{n\in \Z^-}$ admits a strong exponential dichotomy on $\Z^-$ with projections $\tilde P_n^i$, $n\in \Z^-$ such that $\Ker \tilde P_0^-=V_{i} \oplus \ldots \oplus V_r$. Since
\[
\R^d=S_{i}\oplus V_{i} \oplus \ldots \oplus V_r,
\]
we conclude from the proof of~\cite[Theorem 2.3]{BDV0} that $(\frac {1}{e^{\tilde c_i}}B_n)_{n\in \Z}$ admits a strong exponential dichotomy on $\Z$.
This implies that $\R \setminus \Sigma_{ED, \mathbb B} \subset \R \setminus \Sigma_{ED, \mathbb B'}$, i.e., $\Sigma_{ED, \mathbb B'}\subset \Sigma_{ED, \mathbb B}$. On the other hand, we trivially have that $\Sigma_{ED, \mathbb B}\subset \Sigma_{ED, \mathbb B'}$, proving that the two spectra coincide, i.e.,
\[
\Sigma_{ED, \mathbb B'}
=\Sigma_{ED, \mathbb B}
=\bigcup_{i=1}^r [a_i, b_i].
\]
The conclusion of the theorem now follows readily from Theorem~\ref{Ap} given in Appendix A (which is a corollary of \cite[Theorem 2]{DZZ}).
\end{proof}

\subsection{Differentiable and H\"older linearization under exponential dichotomy}
Using similar arguments to the proof of Theorem \ref{t0}, we can also obtain the following result, where we remove the spectral gap condition, i.e., the first inequality of (\ref{sp2}), and obtain the simultaneously differentiable (at 0) and H\"older linearization.
\begin{theorem}\label{t1}
Suppose that $\mathbb B=(B_n)_{n\in \Z^+}$ is as in the statement  of~Theorem {\rm \ref{t0}} and that $\Sigma_{ED, \mathbb B}$ has the form \eqref{EDB} and satisfies \eqref{sp1} and
\begin{equation}\label{sp3}
b_i-a_i\le -b_k \text{ for } 1\le i\le k \;\text{ and }\; b_j-a_j\le a_{k+1} \text{ for } k+1\le j \le r.
\end{equation}
Let $f_n\colon \R^d \to \R^d$, $n\in \Z^+$, be a sequence of $C^1$ maps such that \eqref{fzero} and \eqref{fm} hold and let $\alpha_1\in \mathbb{R}$ be an arbitrary constant satisfying
\begin{equation*}\label{alpha1}
    0<\alpha_1<\min\Big\{\frac{ a_{k+1}- b_k}{ b_r},\frac{ a_{k+1}- b_{k}}{ -a_1}\Big\}.
\end{equation*}
Then, provided that $\eta$ is sufficiently small, there exists a sequence of homeomorphisms $h_n\colon \R^d\to \R^d$, $n\in \Z^+$, such that
\begin{itemize}
\item for $n\in \Z^+$,
$
h_{n+1} \circ (B_n+f_n)=B_n\circ h_n;
$
\item there exist constants $\tilde L, \rho>0$ and $0<\varrho<1$ such that
\begin{align*}
\|h_n(x)-x\|=o(\|x\|^{1+\varrho}) \; \text{ and } \; \|h_n^{-1}(x)-x\|=o(\|x\|^{1+\varrho})
\end{align*}
as $\|x\|\to 0$ and
\begin{align*}
\|h_n(x)-h_n(y)\|\!\le\! \tilde L\|x-y\|^{\alpha_1} \; \text{ and }\;
\|h_n^{-1}(x)-h_n^{-1}(y)\|\!\le\! \tilde L \|x-y\|^{\alpha_1}
\end{align*}
for $x,y\in \mathbb{R}^d$ satisfying $\|x\|,\|y\|\le \rho$.
\end{itemize}
\end{theorem}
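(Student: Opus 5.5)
The proof will mirror that of Theorem~\ref{t0}: extend the one-sided data to two-sided data with the same spectrum, and then invoke the two-sided differentiable/H\"older linearization result of \cite{DZZ20}, which replaces \cite[Theorem 2]{DZZ} used there.

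First we set $f_n:=0$ for $n<0$. Then \eqref{fzero} and \eqref{fm} hold for all $n\in\Z$ with the same constants $\eta,L$.

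Next we choose $c_1<a_1\le b_1<c_2<\dots<b_r<c_{r+1}$ and build the filtration $S_1=\{0\}\subsetneq S_2\subsetneq\dots\subsetneq S_{r+1}=\R^d$ from the strong exponential dichotomies of $(e^{-c_i}B_n)_{n\in\Z^+}$. We pick complements $V_2,\dots,V_r$ and define the constant operator $B$ by \eqref{BZ-}, setting $B_n:=B$ for $n<0$. We then argue exactly as in the proof of Theorem~\ref{t0}, using \cite[Lemmas 3.5, 3.6]{Bac25} and the gluing argument of \cite[Theorem 2.3]{BDV0}. This shows that $\mathbb B'=(B_n)_{n\in\Z}$ admits a strong exponential dichotomy and that $\Sigma_{ED,\mathbb B'}=\Sigma_{ED,\mathbb B}=\bigcup_{i=1}^r[a_i,b_i]$. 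Consequently the spectral conditions \eqref{sp1} and \eqref{sp3} transfer verbatim to $\mathbb B'$, and so does the admissible range of $\alpha_1$, which depends only on $a_1,b_k,a_{k+1},b_r$. We also note that the bounded-growth constants in \eqref{bg} remain finite, since $B$ is a fixed invertible operator.

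We then apply the two-sided result of \cite{DZZ20} in its uniform version, specialized to zero nonuniformity, to the system $x_{n+1}=B_nx_n+f_n(x_n)$, $n\in\Z$. Its hypotheses are precisely \eqref{sp1}, the band conditions \eqref{sp3}, and the smallness and Lipschitz conditions \eqref{fm}. For $\eta$ small it yields homeomorphisms $h_n$, $n\in\Z$, with the following properties:
\begin{itemize}
\item they satisfy $h_{n+1}\circ(B_n+f_n)=B_n\circ h_n$;
\item they satisfy $\|h_n^{\pm1}(x)-x\|=o(\|x\|^{1+\varrho})$;
\item they are $\alpha_1$-H\"older on $\{\|x\|\le\rho\}$, with constants $\tilde L,\rho,\varrho$ independent of $n$.
\end{itemize}
Restricting to $n\in\Z^+$ gives the statement.

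The main obstacle is the uniformity in $n$ of the differentiability estimate $o(\|x\|^{1+\varrho})$ and of the H\"older constants. As \cite{DZZ20} is formulated, this uniformity may be implicit or only proved with $n$-dependent constants in the nonuniform setting. We therefore plan to check, in the spirit of Appendix~\ref{sec: appendix a}, that with nonuniformity exponent zero all the constants in the construction are uniform. The construction decouples the stable and unstable parts via invariant foliations and then uses a Lyapunov–Perron type contraction on weighted spaces. The weights involve $\|x\|^{1+\varrho}$ with $\varrho$ chosen using the band gaps \eqref{sp3}.
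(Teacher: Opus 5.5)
Your overall route is the paper's: extend $(B_n)$ and $(f_n)$ to $\Z$ exactly as in the proof of Theorem~\ref{t0}, then invoke \cite{DZZ20}. The gap is that at the step you yourself call the main obstacle, uniformity in $n$, you give only a plan to re-audit the nonautonomous construction with zero nonuniformity. The paper avoids that audit altogether. It applies \cite[Lemma 4]{DZZ20} directly to the lifted pair $\mathbb A^*,\mathbb F$ on $(Y_\infty,\|\cdot\|_\infty)$ from Appendix~\ref{sec: appendix a}, with $A_n:=B_n$. The spectral hypotheses are read off from $|\sigma(\mathbb A^*)|=\bigcup_i[e^{a_i},e^{b_i}]$.

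That lemma yields a single homeomorphism $\Phi$ of $Y_\infty$ with $\Phi\circ\mathbb F=\mathbb A^*\circ\Phi$ and $\|\Phi(\mathbf x)-\mathbf x\|_\infty=o(\|\mathbf x\|_\infty^{1+\varrho})$. It is also $\alpha_1$-H\"older on a ball $\|\mathbf x\|_\infty\le\rho$, and the same holds for $\Phi^{-1}$. Set $h_n(v):=\Phi(\mathbf v^n)_n$. Then $\|h_n(x)-h_n(y)\|\le\|\Phi(\mathbf x^n)-\Phi(\mathbf y^n)\|_\infty$ and $\|h_n(x)-x\|\le\|\Phi(\mathbf x^n)-\mathbf x^n\|_\infty$. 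Hence $\tilde L$, $\rho$ and the $o(\cdot)$ term are automatically independent of $n$, exactly as in the proof of \eqref{conj}, and no Lyapunov--Perron estimates need to be retraced.

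What your proposal does not supply is why the $h_n$ are homeomorphisms satisfying $h_{n+1}\circ(B_n+f_n)=B_n\circ h_n$. You take this as a black-box output of a two-sided theorem. Both facts follow once one knows that $\Phi(\mathbf x)_n$ depends only on $x_n$: then $\Phi=(h_n)_n$ coordinatewise, $\Phi^{-1}=(h_n^{-1})_n$, and the conjugacy can be read off coordinate by coordinate. This coordinatewise structure is the real content of Appendix~\ref{sec: appendix a}, which shows it for:
\begin{itemize}
\item the foliation maps $p,q$;
\item the splitting maps $\eta,\sigma$;
\item the linearizations of the contractive and expanding parts, including the local-to-global extension.
\end{itemize}
Remark~\ref{855rem} notes that in the present setting only the regularity of $p,q,\eta$ changes: they are locally H\"older and differentiable at $0$.

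The paper states explicitly that this argument was missing from the earlier proofs. So it is precisely what you must add. You invoke Appendix~\ref{sec: appendix a} only for uniformity, but its essential role is bijectivity and the conjugacy relation; uniformity then comes for free from the lift.
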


\begin{proof}
The proof can be obtained in a similar manner to the proof of Theorem~\ref{t0}. More precisely, we extend the sequences $\mathbb B=(B_n)_{n\in \Z^+}$ and $(f_n)_{n\in \Z^+}$ to two-sided sequences $(B_n)_{n\in \Z}$ and $(f_n)_{n\in \Z}$
 exactly as in the proof of Theorem~\ref{t0}. Then, it remains to apply~\cite[Lemma 4]{DZZ20} to $\mathbb A^*$ (for $A_n:=B_n$, $n\in \mathbb Z)$ and $\mathbb F$ introduced in Appendix A (see the proof of Theorem~\ref{Ap} and Remark~\ref{855rem}).
\end{proof}

\section{Smooth linearization under $\mu$-dichotomy}
\label{sec: lin pol}

In this section, we present our main results concerning the linearization under $\mu$-dichotomic behavior.

\subsection{$C^1$ linearization under $\mu$-dichotomy}

\begin{theorem}\label{theo: main linearization discrete}
Let $\mu=(\mu_n)_{n\in \Z^+}$ be a growth rate that satisfies~\eqref{eq: grbound} for some $\theta \ge 1$ and $\mathbb A=(A_n)_{n\in \mathbb N}$ be a sequence of invertible linear operators on $\mathbb R^d$ that admits a strong $\mu$-dichotomy. Furthermore, suppose $\Sigma_{\mu D, \mathbb A}$ has the form \eqref{EDB-mu}
with $1\le r\le d$ and that the numbers $a_i$ and $b_i$ satisfy \eqref{sp1} and \eqref{sp2}.
Finally, let $(g_n)_{n\in \N}$ be a sequence of $C^1$ maps $g_n\colon \R^d \to \R^d$ satisfying the following:
\begin{itemize}
\item for every $n\in \N$,
\begin{equation}\label{eq: gn 0 and Dgn 0}
g_n(0)=0 \;\text{ and } \; Dg_n(0)=0;
\end{equation}
\item there exists $c>0$ such that 
\begin{equation}\label{eq: bound deriv g}
\|Dg_n(x)\| \le c\frac{\mu_n'}{\mu_n}, \quad \text{for $x\in \R^d$ and $n\in \N$,}
\end{equation}
where $\mu_n'=\mu_{n+1}-\mu_n$;
\item there exists $M>0$ such that 
\begin{equation}\label{4-4-3}
\|Dg_n(x)-Dg_n(y)\| \le M\frac{\mu_n'}{\mu_n}\|x-y\|, \quad \text{for $x, y\in \R^d$ and $n\in \N$.}
\end{equation}
\end{itemize}
Then, provided that $c$ is sufficiently small, there exists a sequence $(\psi_n)_{n\in \N}$ of $C^1$-diffeomorphisms $\psi_n\colon \R^d \to \R^d$ such that
\begin{equation}\label{lincond}
\psi_{n+1}\circ (A_n+g_n)=A_n  \circ \psi_n, \; \text{ for } n\in \N,
\end{equation}
and there exist $T, \rho>0$ so that 
\begin{equation}\label{hn-1}
\|D\psi_n(x)\| \le T \quad \text{and} \quad \|D\psi_n^{-1}(x)\| \le T,
\end{equation}
for $n\in \N$ and $x\in \R^d$ with $\|x\|\le \rho$.
\end{theorem}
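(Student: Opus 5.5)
The plan is to reduce Theorem~\ref{theo: main linearization discrete} to Theorem~\ref{t0} through the time rescaling of Section~\ref{sec: time rescaling}, and then to fill in the intermediate times explicitly.

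\textbf{Setup.} Set $N_n:=\lfloor \widetilde\mu^{-1}(e^{n-1})\rfloor+1$ for $n\in\N$, so that $B^\mu_n=\cA(N_{n+1},N_n)$. Let $\Phi(m,k)$ denote the nonlinear evolution of $x_{k+1}=(A_k+g_k)x_k$. Define
\[
F_n:=\Phi(N_{n+1},N_n)=B^\mu_n+f_n, \qquad f_n:=F_n-B^\mu_n .
\]
By Theorems~\ref{theo: equiv exp mu dich} and~\ref{theo: DS-equal spect}, $\mathbb B=(B^\mu_n)$ admits a strong exponential dichotomy. Moreover $\Sigma_{ED,\mathbb B}=\Sigma_{\mu D,\mathbb A}$, so \eqref{sp1} and \eqref{sp2} hold for $\mathbb B$.

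\textbf{Block bounds.} I will use the following two estimates.
\begin{itemize}
\item Within a block $N_n\le j\le k\le N_{n+1}$, the definition of $N_n$ together with \eqref{eq: grbound} gives $\mu_k/\mu_j\le e\theta$ (or a similar constant). Then \eqref{bg} yields $\|\cA(k,j)\|,\|\cA(j,k)\|\le K(e\theta)^a=:C_0$, uniformly in $n$.
\item Setting $x_k=\mu_{k+1}/\mu_k\in[1,\theta]$, the elementary inequality $x-1\le x\ln x$ gives
\[
\sum_{k=N_n}^{N_{n+1}-1}\frac{\mu_k'}{\mu_k}\le \theta\ln\frac{\mu_{N_{n+1}}}{\mu_{N_n}}\le \theta(1+\ln\theta).
\]
\end{itemize}

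\textbf{Verifying the hypotheses of Theorem~\ref{t0}.} Conditions \eqref{fzero} are immediate from \eqref{eq: gn 0 and Dgn 0}. For \eqref{fm}, I would write
\[
DF_n(x)=\prod_k\bigl(A_k+Dg_k(x_k)\bigr),
\]
with $x_k$ the orbit of $x$, and expand this against $\prod_k A_k$ by a discrete variation of constants argument:
\[
DF_n(x)-B^\mu_n=\sum_k \cA(N_{n+1},k+1)\,Dg_k(x_k)\,D\Phi(k,N_n)(x).
\]
A discrete Gronwall argument using $C_0$ bounds $\|D\Phi(k,N_n)\|$ by $C_0\exp\bigl(C_0c\,\theta(1+\ln\theta)\bigr)$. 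Consequently $\|Df_n(x)\|\le C_1c$, so $\eta=C_1c$ is small when $c$ is small.

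The Lipschitz bound for $Df_n$ is obtained the same way. I expand $D\Phi(k,N_n)(x)-D\Phi(k,N_n)(y)$ telescopically and use \eqref{4-4-3}, the bound $\|x_k-y_k\|\le C\|x-y\|$, and again the summability of $\mu_k'/\mu_k$ over a block. This gives a uniform constant $L$. I expect this step to be the main technical obstacle: it requires careful bookkeeping of uniform constants over blocks of unbounded length, and showing that everything depends only on $\theta$, $K$, $a$, $c$ and $M$.

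\textbf{Conclusion.} Theorem~\ref{t0} now gives $C^1$-diffeomorphisms $h_n$ with $h_{n+1}\circ F_n=B^\mu_n\circ h_n$ and derivatives bounded by $M'$ on $\|x\|\le\rho'$. For $N_n\le k<N_{n+1}$ I define
\[
\psi_k:=\cA(k,N_n)\circ h_n\circ \Phi(N_n,k).
\]
A direct check gives \eqref{lincond}: inside a block the relation is tautological, and at block boundaries it reduces to the conjugacy for $h_n$. The index set $\{k\ge N_1\}$ covers $\N$ since $N_1=1$; if the indexing differs slightly, the finitely many initial times are handled by the same formula backwards.

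For \eqref{hn-1}, the derivatives of $\cA(k,N_n)$, $\Phi(N_n,k)$ and their inverses are bounded by constants of type $C_0e^{O(c)}$, by the block bounds. Moreover $\Phi(N_n,k)$ maps the ball of radius $\rho$ into the ball of radius $C\rho$. Choosing $\rho=\rho'/C$ therefore gives \eqref{hn-1} with $T=C^2M'$, and the same argument applies to $\psi_k^{-1}=\Phi(k,N_n)\circ h_n^{-1}\circ\cA(N_n,k)$.
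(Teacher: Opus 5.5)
Your proposal is correct and follows essentially the same route as the paper: you rescale to $\mathbb B=(B^\mu_n)$, set $f_n=\Phi(N_{n+1},N_n)-B^\mu_n$, and verify \eqref{fzero}--\eqref{fm} by variation of constants, discrete Gronwall and $\sum_k \mu_k'/\mu_k\le\theta\ln(\mu_{N_{n+1}}/\mu_{N_n})$; you then apply Theorem~\ref{t0} and interpolate with $\psi_k=\cA(k,N_n)\circ h_n\circ\Phi(N_n,k)$, where your uniform block constants merely streamline the paper's global bound $\|D\mathcal G(m,n)\|\le K(\mu_m/\mu_n)^{\tilde a}$. The one step you leave implicit is that each $A_k+g_k$ is a global diffeomorphism, which your backward maps $\Phi(N_n,k)$ require; the paper checks this first via the contraction $x\mapsto A_k^{-1}y-A_k^{-1}g_k(x)$, whose constant $cK\theta^{a+1}$ is less than $1$ for small $c$.
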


\begin{proof} Given $n\in \N$, let $G_n:=A_n+g_n$. We start observing that, whenever $c>0$ given by \eqref{eq: bound deriv g} is small enough, $G_n$ is a bijection (a diffeomorphism, in fact) for every $n\in \N$. For this purpose, note that \eqref{eq: bound deriv g} implies that
\begin{equation}\label{LIP}
    \|g_n(x)-g_n(y)\|\leq c\frac{\mu_n'}{\mu_n}\|x-y\|
\end{equation}
for every $x,y\in \R^d$. Now, given $y\in \R ^d$, we observe that the map $x\mapsto A_n^{-1}y-A_n^{-1}g_n(x)$ is a contraction on $\R^d$. Indeed, given $x_1, x_2\in \R ^d$, using \eqref{bg}, \eqref{eq: grbound} and~\eqref{LIP}, we have that
\[
\begin{split}
\|A_n^{-1}y-A_n^{-1}g_n(x_1)-(A_n^{-1}y-A_n^{-1}g_n(x_2))\| &=\|A_n^{-1}g_n(x_1)-A_n^{-1}g_n(x_2)\| \\
&\le K \left (\frac{\mu_{n+1}}{\mu_n}\right )^a \|g_n(x_1)-g_n(x_2)\| \\
&\le cK\left (\frac{\mu_{n+1}}{\mu_n}\right )^a \frac{\mu_{n}'}{\mu_n}\|x_1-x_2\| \\
&\le cK\left (\frac{\mu_{n+1}}{\mu_n}\right )^{a+1}\|x_1-x_2\| \\
&\le cK\theta^{a+1}\|x_1-x_2\|.
\end{split}
\]
Therefore, provided that $cK\theta^{a+1}<1$, we see that
$x\mapsto A_n^{-1}y-A_n^{-1}g_n(x)$ is a contraction and, in particular, has a unique fixed point. That is, there exists a unique $x\in \R^d$ that satisfies $x=A_n^{-1}y-A_n^{-1}g_n(x)$, which is equivalent to $G_ n(x)=A_n x+g_n(x)=y$. This shows that $G_n$ is a bijection as claimed. In particular, we can consider
\begin{align*}
\mathcal G(m,n):=
\begin{cases}
G_{m-1} \circ \ldots \circ G_n, & m>n;\\
\Id, & m=n;\\
G_{m}^{-1} \circ \ldots \circ G_{n-1}^{-1}, & m<n.
\end{cases}
\end{align*}

Let $\mathbb B=(B_n)_{n\in \N}:=(B^\mu_n)_{n\in \N}$ be the sequence given by~\eqref{eq:B-mu}. By Theorem~\ref{theo: equiv exp mu dich}, we have that $\mathbb B$ admits an exponential dichotomy and satisfies \eqref{supp}.
Moreover, Theorem~\ref{theo: DS-equal spect} gives $\Sigma_{ED, \mathbb B}=\Sigma_{\mu D, \mathbb A}$. For $n\in \N$, let $f_n\colon \R^d\to \R^d$ be given by
\begin{equation}\label{fnx}
f_n(x):=\sum_{j=\lfloor \widetilde{\mu}^{-1}(e^{n-1})\rfloor+1}^{\lfloor \widetilde{\mu}^{-1}(e^{n})\rfloor}\cA(\lfloor \widetilde{\mu}^{-1}(e^{n})\rfloor+1, j+1)g_j (\mathcal G(j, \lfloor \widetilde{\mu}^{-1}(e^{n-1})\rfloor+1)(x)). 
\end{equation}
Our objective now is to show that $\mathbb B$ and $(f_n)_{n\in \N}$ satisfy the hypotheses of Theorem \ref{t0} so that we can translate the conclusions of that theorem into our setting. We start with some auxiliary observations.

Note that, by~\eqref{eq: grbound}, for each $t \in \Z^+$, we have
\[
\frac{\tilde{\mu}(t+1)}{\tilde{\mu}(t)}=\frac{\mu_{t+1}}{\mu_t}\le \theta
\]
and, for each $t \in [0,+\infty)\setminus\Z^+$, we have
\[
\frac{\tilde{\mu}(t+1)}{\tilde{\mu}(t)}=\frac{\mu_{r+1}+(t-r)(\mu_{r+2}-\mu_{r+1})}{\mu_r+(t-r)(\mu_{r+1}-\mu_r)}\le \frac{\mu_{r+2}}{\mu_r}=\frac{\mu_{r+2}}{\mu_{r+1}}\frac{\mu_{r+1}}{\mu_r} \le \theta^2,
\]
where $r=\lfloor t\rfloor \in \Z^+$. Consequently, since $\theta \ge 1$, we conclude that
\begin{equation}\label{eq:bound2}
\frac{\tilde{\mu}(t+1)}{\tilde{\mu}(t)}\le \theta^2, \quad \text{for all} \ t \in [0,+\infty).
\end{equation}
We also have the following useful estimate from \cite[Lemma 3.1]{DSV}: \begin{equation}\label{nm-1}
\sum_{j=n}^{m-1}\frac{\mu_j'}{\mu_j}\le \theta \log \left (\frac{\mu_m}{\mu_n}\right ).
\end{equation}

Next, we claim  that there exists $\tilde a \ge a$ such that 
\begin{equation}\label{krt}
\|D\mathcal G(m, n)(x)\| \le  K \left (\frac{\mu_{\max (m,n)}}{\mu_{\min (m,n)}}\right )^{\tilde a}, \quad \text{for $m, n\in \N$ and $x\in \mathbb R^d$}.
\end{equation}
Suppose initially that $m\ge n$. Observe that
\begin{equation*}
\mathcal G(m, n)(x)=\cA(m, n)x+\sum_{j=n}^{m-1} \cA (m, j+1)g_j(\mathcal G(j, n)(x)),
\end{equation*}
for $x\in \R^d$ and $m\ge n \ge 1$ and thus 
\begin{equation}\label{gj}
D\mathcal G(m, n)(x)=\cA(m, n)+\sum_{j=n}^{m-1}\cA(m, j+1)Dg_j(\mathcal G(j, n)(x))D\mathcal G(j, n)(x).
\end{equation}
Consequently, using \eqref{bg} and \eqref{eq: bound deriv g}, we get
\[
\begin{split}
\|D\mathcal G(m, n)(x)\| &\le K\left (\frac{\mu_m}{\mu_n}\right )^a+Kc\sum_{j=n}^{m-1}\left (\frac{\mu_m}{\mu_{j+1}}\right )^a \frac{\mu_j'}{\mu_j}\|D\mathcal G(j, n)(x)\| \\
&\le K\left (\frac{\mu_m}{\mu_n}\right )^a+Kc\sum_{j=n}^{m-1}\left (\frac{\mu_m}{\mu_{j}}\right )^a \frac{\mu_j'}{\mu_j}\|D\mathcal G(j, n)(x)\|.
\end{split}
\]
Hence, 
\begin{equation}\label{eq: aux Gronw}
\left (\frac{\mu_n}{\mu_m}\right )^a \|D\mathcal G(m, n)(x)\| \le K+Kc \sum_{j=n}^{m-1}\frac{\mu_j'}{\mu_j}\left (\frac{\mu_n}{\mu_j}\right )^a \|D\mathcal G(j, n)(x)\|.
\end{equation}
Now, in order to get the desired estimate, we need the following discrete-version of Gronwall's lemma (see, e.g. \cite[Lemma 4.32]{E-book}).
\begin{lemma}\label{lm-G}
Let $n\in \N$ and $\alpha >0$. Suppose that $(u_m)_{m\ge n}$ and $(z_m)_{m\ge n}$ are two nonnegative sequences that 
$
u_m \le K\{\alpha + \sum_{j=n}^{m-1} z_ju_j\}
$
for $m \ge n$.
Then
\begin{align*}
u_m \le K\alpha \,e^{K\sum_{j=n}^{m-1} z_j}, \quad \forall m \ge n.
\end{align*}
\end{lemma}
Thus, applying Gronwall's lemma to \eqref{eq: aux Gronw} and using \eqref{nm-1} we get that   
\[
\|D\mathcal G(m, n)(x)\| \le K\left (\frac{\mu_m}{\mu_n}\right )^a e^{Kc\sum_{j=n}^{m-1}\frac{\mu_j'}{\mu_j}} \le K\left (\frac{\mu_m}{\mu_n}\right )^{a+Kc\theta}.
\]
Therefore, \eqref{krt} holds with $\tilde a:=a+Kc\theta$. The case where $m<n$ can be obtained similarly.

As a final auxiliary result, we need to estimate
\[
\|D\mathcal G(m, n)(x)-D\mathcal G(m, n)(y)\|
\]
for $m\ge n$. By~\eqref{gj} we have that
\[
\begin{split}
&D\mathcal G(m, n)(x)-D\mathcal G(m, n)(y)\\
&=\sum_{j=n}^{m-1}\cA(m, j+1)Dg_j(\mathcal G(j, n)(x))D\mathcal G(j, n)(x) \\
&\phantom{=}-\sum_{j=n}^{m-1}\cA(m, j+1)Dg_j(\mathcal G(j, n)(y))D\mathcal G(j, n)(y) \\
&=\sum_{j=n}^{m-1}\cA(m, j+1)Dg_j(\mathcal G(j, n)(x))(D\mathcal G(j, n)(x)-D\mathcal G(j, n)(y))\\
&\phantom{=}-\sum_{j=n}^{m-1}\cA(m, j+1)(D g_j(\mathcal G(j, n)(y))-D g_j(\mathcal G(j, n)(x))D\mathcal G(j, n)(y).
\end{split}
\]
Thus, using \eqref{bg} and~\eqref{eq: bound deriv g}, we get that 
\[
\begin{split}
&\left \|\sum_{j=n}^{m-1}\cA(m, j+1)Dg_j(\mathcal G(j, n)(x))(D\mathcal G(j, n)(x)-D\mathcal G(j, n)(y))\right \| \\
&\le \sum_{j=n}^{m-1}\left \|\cA(m, j+1)Dg_j(\mathcal G(j, n)(x))(D\mathcal G(j, n)(x)-D\mathcal G(j, n)(y)) \right \| \\
&\le Kc \sum_{j=n}^{m-1}\left (\frac{\mu_m}{\mu_{j+1}}\right )^a  \frac{\mu_j'}{\mu_j}\|D\mathcal G(j, n)(x)-D\mathcal G(j, n)(y)\| \\
&\le Kc \sum_{j=n}^{m-1}\left (\frac{\mu_m}{\mu_{j}}\right )^a  \frac{\mu_j'}{\mu_j}\|D\mathcal G(j, n)(x)-D\mathcal G(j, n)(y)\|.
\end{split}
\]
Moreover, \eqref{bg}, \eqref{4-4-3}, \eqref{nm-1} and \eqref{krt} imply that 
\[
\begin{split}
&\left \|\sum_{j=n}^{m-1}\cA(m, j+1)(D g_j(\mathcal G(j, n)(y))-D g_j(\mathcal G(j, n)(x))D\mathcal G(j, n)(y)\right \| \\
&\le KM\sum_{j=n}^{m-1}\left (\frac{\mu_m}{\mu_{j+1}}\right )^a \frac{\mu_j'}{\mu_j}\|\mathcal G(j, n)(x)-\mathcal G(j, n)(y)\| \cdot \|D\mathcal G(j, n)(y)\| \\
&\le K^3M \|x-y\|\sum_{j=n}^{m-1}\left (\frac{\mu_m}{\mu_j}\right )^a \frac{\mu_j'}{\mu_j} \left (\frac{\mu_j}{\mu_n}\right )^{2\tilde a} \\
&\le K^3M\|x-y\|\left (\frac{\mu_m}{\mu_n}\right )^{2\tilde a}\left (\frac{\mu_m}{\mu_n}\right )^a\sum_{j=n}^{m-1}\frac{\mu_j'}{\mu_j} \\
&\le K^3M \theta \|x-y\|\left (\frac{\mu_m}{\mu_n}\right )^{2\tilde a} \left (\frac{\mu_m}{\mu_n}\right )^a \log \left (\frac{\mu_m}{\mu_n}\right ).
\end{split}
\]
Combining these observations, we conclude that, for $m\geq n$,
\[
\begin{split}
\left (\frac{\mu_n}{\mu_m}\right )^a\|D\mathcal G(m, n)(x)-D\mathcal G(m, n)(y)\| &\le K^3M \theta \|x-y\|\left (\frac{\mu_m}{\mu_n}\right )^{2\tilde a} \log \left (\frac{\mu_m}{\mu_n}\right ) \\
&\phantom{\le}+Kc \sum_{j=n}^{m-1}\left (\frac{\mu_n}{\mu_{j}}\right )^a  \frac{\mu_j'}{\mu_j}\|D\mathcal G(j, n)(x)-D\mathcal G(j, n)(y)\|.
\end{split}
\]
Thus, using Gronwall's lemma once again, we get that
\begin{equation}\label{eq: DG is holder}
\begin{split}
\|D\mathcal G(m, n)(x)-D\mathcal G(m, n)(y)\| &\le K^3M \theta \|x-y\|\left (\frac{\mu_m}{\mu_n}\right )^{3\tilde a+Kc\theta} \log \left (\frac{\mu_m}{\mu_n}\right )\\
\end{split}
\end{equation}
for $m\geq n$.

We now estimate the size of $\|Df_n(x)\|$. Note that
\[
\begin{split}
&Df_n(x)=\\
&\sum_{j=\lfloor \widetilde{\mu}^{-1}(e^{n-1})\rfloor+1}^{\lfloor \widetilde{\mu}^{-1}(e^{n})\rfloor}\cA(\lfloor \widetilde{\mu}^{-1}(e^{n})\rfloor+1, j+1)Dg_j (\mathcal G(j, \lfloor \widetilde{\mu}^{-1}(e^{n-1})\rfloor+1)(x))D\mathcal G(j, \lfloor \widetilde{\mu}^{-1}(e^{n-1})\rfloor+1)(x).
\end{split}
\]
Thus, using \eqref{bg}, \eqref{eq: bound deriv g}, \eqref{nm-1}, and~\eqref{krt}, we get that 
\begin{equation*}\label{dfnx}
\begin{split}
\|Df_n(x)\| &\le K^2 c\sum_{j=\lfloor \widetilde{\mu}^{-1}(e^{n-1})\rfloor+1}^{\lfloor \widetilde{\mu}^{-1}(e^{n})\rfloor}\left (\frac{\mu_{\lfloor \widetilde{\mu}^{-1}(e^{n})\rfloor+1}}{\mu_{j+1}}\right )^a  \frac{\mu_j'}{\mu_j} \left (\frac{\mu_j}{\mu_{\lfloor \widetilde{\mu}^{-1}(e^{n-1})\rfloor+1}}\right)^{\tilde a} \\
&\le K^2c\left (\frac{\mu_{\lfloor \widetilde{\mu}^{-1}(e^{n})\rfloor+1}}{\mu_{\lfloor \widetilde{\mu}^{-1}(e^{n-1})\rfloor+1}}\right )^{a+\tilde a}\sum_{j=\lfloor \widetilde{\mu}^{-1}(e^{n-1})\rfloor+1}^{\lfloor \widetilde{\mu}^{-1}(e^{n})\rfloor}\frac{\mu_j'}{\mu_j} \\
&\le \theta K^2c\left (\frac{\mu_{\lfloor \widetilde{\mu}^{-1}(e^{n})\rfloor+1}}{\mu_{\lfloor \widetilde{\mu}^{-1}(e^{n-1})\rfloor+1}}\right )^{a+\tilde a}\log \left (\frac{\mu_{\lfloor \widetilde{\mu}^{-1}(e^{n})\rfloor+1}}{\mu_{\lfloor \widetilde{\mu}^{-1}(e^{n-1})\rfloor+1}}\right ) \\
&\le c\theta^{1+2(a+\tilde a)}K^2 e^{a+\tilde a}\log (\theta^2 e)
\end{split}
\end{equation*} 
where in the last inequality, we have used that (recall the properties of $\widetilde{\mu}$ and $\widetilde{\mu}^{-1}$ given in Section \ref{sec: time rescaling} and \eqref{eq:bound2})
\begin{equation}\label{eq: aux mu mu tilde inverse}
\begin{split}
\frac{\mu_{\lfloor \widetilde{\mu}^{-1}(e^{n})\rfloor+1}}{\mu_{\lfloor \widetilde{\mu}^{-1}(e^{n-1})\rfloor+1}} \le \frac{\widetilde{\mu}(\widetilde{\mu}^{-1}(e^n)+1)}{e^{n-1}}=\frac{\widetilde{\mu}(\widetilde{\mu}^{-1}(e^n)+1)}{\widetilde{\mu}(\widetilde{\mu}^{-1}(e^n))} \ \frac{e^{n}}{e^{n-1}}\le e\theta^2.
\end{split}
\end{equation}
Hence, there exists $C>0$ such that 
\begin{equation*}\label{8-3-7}
    \|Df_n(x)\| \le cC, \quad \text{for $n\in \N$ and $x\in \R^d$.}
\end{equation*}

Finally, recalling \eqref{fnx}, we have that 
\begin{equation}\label{eq: exp fn Gn An}
f_n=\mathcal G(\lfloor \widetilde{\mu}^{-1}(e^{n})\rfloor+1, \lfloor \widetilde{\mu}^{-1}(e^{n-1})\rfloor+1)-\cA(\lfloor \widetilde{\mu}^{-1}(e^{n})\rfloor+1, \lfloor \widetilde{\mu}^{-1}(e^{n-1})\rfloor+1).
\end{equation}
Thus,
\begin{equation*}\label{Dfnxy}
\begin{split}
&Df_n(x)-Df_n(y)\\
&=D\mathcal G(\lfloor \widetilde{\mu}^{-1}(e^{n})\rfloor+1, \lfloor \widetilde{\mu}^{-1}(e^{n-1})\rfloor+1)(x)-D\mathcal G(\lfloor \widetilde{\mu}^{-1}(e^{n})\rfloor+1, \lfloor \widetilde{\mu}^{-1}(e^{n-1})\rfloor+1)(y).
\end{split}
\end{equation*}
Consequently, by \eqref{eq: DG is holder} and \eqref{eq: aux mu mu tilde inverse}, there exists $\tilde M>0$ such that 
\[
\|Df_n(x)-Df_n(y)\| \le \tilde M\|x-y\|, \quad \text{for $n\in \N$ and $x, y\in \R^d$.}
\]

Combining the previous observations, we conclude that $\mathbb B$ and $(f_n)_{n\in \N}$ satisfy the hypotheses of Theorem \ref{t0}. Thus, there is a sequence $(h_n)_{n\in \N}$ of $C^1$-diffeomorphisms $h_n\colon \R^d \to \R^d$ satisfying
\begin{equation}\label{LIN-proof}
    h_{n+1}\circ (B_n+f_n)=B_n\circ h_n
\end{equation}
for $n\in \N$. Moreover, there exist $T_1, \rho_1>0$ such that 
\begin{equation}\label{psi-1}
\|Dh_n(x)\| \le T_1 \quad \text{and} \quad \|Dh_n^{-1}(x)\| \le T_1,
\end{equation}
for $n\in \N$ and $x\in \R^d$ with $\|x\|\le \rho_1$.
In particular, recalling \eqref{eq:B-mu} and \eqref{eq: exp fn Gn An}, we get that
\begin{equation}\label{lin}
    h_{n+1}\circ \mathcal G(\lfloor \widetilde{\mu}^{-1}(e^{n})\rfloor+1, \lfloor \widetilde{\mu}^{-1}(e^{n-1})\rfloor+1)=\cA (\lfloor \widetilde{\mu}^{-1}(e^{n})\rfloor+1, \lfloor \widetilde{\mu}^{-1}(e^{n-1})\rfloor+1)\circ h_n,
\end{equation}
for $n \in \mathbb N$.

Now, given $k\in \N$, choose $n\in \Z^+$ such that 
\begin{equation}\label{nn+1}
\lfloor\widetilde\mu^{-1}(e^{n+1})\rfloor +1>  k
\ge \lfloor\widetilde\mu^{-1}(e^n)\rfloor+1
\end{equation}
and define $\psi_k\colon \R^d \to \R^d$ by 
\begin{equation}\label{psik}
\psi_k:=\cA (k, \lfloor\widetilde\mu^{-1}(e^{n})\rfloor+1)\circ h_{n+1}\circ \mathcal G(\lfloor\widetilde\mu^{-1}(e^{n})\rfloor+1, k).
\end{equation}
Clearly, $\psi_k$ is a $C^1$-diffeomorphism for each $k\in \N$.
We claim that 
\begin{equation}\label{linearization}
    \psi_{k+1}\circ (A_k+g_k)=A_k \circ \psi_k, \quad \text{for every $k\in \N$.}
\end{equation}
Indeed, fix $k\in \N$ and choose $n\in \Z^+$ so that~\eqref{nn+1} holds. If $\lfloor\widetilde\mu^{-1}(e^{n+1})\rfloor +1>k+1$, then 
\[
\begin{split}
\psi_{k+1}\circ (A_k+g_k) &=\cA (k+1, \lfloor\widetilde\mu^{-1}(e^{n})\rfloor+1)\circ h_{n+1}\circ \mathcal G(\lfloor\widetilde\mu^{-1}(e^{n})\rfloor+1, k+1)\circ \mathcal G(k+1, k)\\
&=A_k \circ \cA (k, \lfloor\widetilde\mu^{-1}(e^{n})\rfloor+1)\circ h_{n+1}\circ \mathcal G(\lfloor\widetilde\mu^{-1}(e^{n})\rfloor+1, k) \\
&=A_k \circ \psi_k,
\end{split}
\]
yielding~\eqref{linearization}.  On the other hand, if $\lfloor\widetilde\mu^{-1}(e^{n+1})\rfloor +1=k+1$, then using~\eqref{lin} we get that 
\[
\begin{split}
\psi_{k+1}\circ (A_k+g_k) &=h_{n+2} \circ \mathcal G(\lfloor\widetilde\mu^{-1}(e^{n+1})\rfloor +1, \lfloor\widetilde\mu^{-1}(e^{n+1})\rfloor) \\
&=h_{n+2}\circ \mathcal G(\lfloor\widetilde\mu^{-1}(e^{n+1})\rfloor +1, \lfloor\widetilde\mu^{-1}(e^{n})\rfloor +1)\circ \mathcal G(\lfloor\widetilde\mu^{-1}(e^{n})\rfloor +1, \lfloor\widetilde\mu^{-1}(e^{n+1})\rfloor)\\
&=\cA (\lfloor\widetilde\mu^{-1}(e^{n+1})\rfloor +1, \lfloor\widetilde\mu^{-1}(e^{n})\rfloor +1)\circ h_{n+1}\circ \mathcal G(\lfloor\widetilde\mu^{-1}(e^{n})\rfloor +1, \lfloor\widetilde\mu^{-1}(e^{n+1})\rfloor)\\
&=A_k\circ \cA (\lfloor\widetilde\mu^{-1}(e^{n+1})\rfloor , \lfloor\widetilde\mu^{-1}(e^{n})\rfloor +1)\circ h_{n+1}\circ \mathcal G(\lfloor\widetilde\mu^{-1}(e^{n})\rfloor +1, \lfloor\widetilde\mu^{-1}(e^{n+1})\rfloor)\\
&=A_k\circ \psi_k.
\end{split}
\]
Thus,~\eqref{linearization} also holds in this case.

Let us now estimate the size of $\|D\psi_k(x)\|$. For this purpose, we start by observing that, since $\mathcal G(m, n)(0)=0$, condition \eqref{krt} together with the mean-value theorem implies that 
\begin{equation}\label{gmnx}
\|\mathcal G(m, n)(x)\| \le K\left (\frac{\mu_{\max (m,n)}}{\mu_{\min (m,n)}}\right )^{\tilde a}\|x\|, \quad \text{for $m,n\in \N$ and $x\in \mathbb R^d$.}
\end{equation}
Thus, \eqref{bg}, \eqref{krt}, \eqref{eq: aux mu mu tilde inverse} and~\eqref{psi-1} gives us that
\[
\begin{split}
\|D\psi_k(x)\| &\le T_1K\left (\frac{\mu_k}{\mu_{\lfloor \widetilde{\mu}^{-1}(e^n)\rfloor+1}}\right )^a  \|D\mathcal G(\lfloor \widetilde{\mu}^{-1}(e^n)\rfloor+1, k)(x)\| \\
&\le T_1K^2 \left (\frac{\mu_k}{\mu_{\lfloor \widetilde{\mu}^{-1}(e^n)\rfloor+1}}\right )^{a+\tilde a} \\
&\le T_1 K^2 (e\theta^2)^{a+\tilde a},
\end{split}
\]
for $k\in \N$ and $x\in \R^d$ such that $\|x\| \le \frac{\rho_1}{K(e\theta^2)^{\tilde a}}$, so that (recall \eqref{eq: aux mu mu tilde inverse} and \eqref{gmnx})
\[
\|\mathcal G(\lfloor \widetilde{\mu}^{-1}(e^n)\rfloor+1, k)(x)\| \le K \left (\frac{\mu_k}{\mu_{\lfloor \widetilde{\mu}^{-1}(\eta_n)\rfloor+1}}\right )^{\tilde a}\|x\| \le K(e\theta^2)^{\tilde a}\|x\| \le \rho_1.
\]
This establishes the first estimate in~\eqref{hn-1}. The second can be established in a similar manner. This completes the proof of the theorem.
\end{proof}

We now apply Theorem \ref{theo: main linearization discrete} to a very simple example in the case where the growth rate $\mu$ is \emph{polynomial}, that is, $\mu_n=1+n$ for $n\in \N$, which is not covered by previously known results.
\begin{example}\label{example: main theo discrete}
Let $\mu=(\mu_n)_{n\in \N}$ be given by $\mu_n=1+n$ for $n\in \N$ and consider the sequences of linear operators $\mathbb{A}=(A_n)_{n\in \N}$ and $(P_n)_{n\in \N}$ acting on $\R^2$ given by
\[
A_n:=\begin{pmatrix}
\frac{n}{n+1} & 0\\
0 & \frac{n+1}{n}
\end{pmatrix} \quad\text{ and }\quad P_n:=\begin{pmatrix}
1 & 0\\
0 & 0
\end{pmatrix},\quad n\in \N.
\]
Then,
\[
\cA(m,n)=\begin{pmatrix}
\frac{n}{m} & 0\\
0 & \frac{m}{n}
\end{pmatrix} \quad  \text{for every } m,n\in \N.
\]
It is easy to see that $(A_n)_{n\in\N}$ admits a strong $\mu$-dichotomy with constants $K=a=\lambda=1$. Moreover, $\Sigma_{\mu D,\mathbb{A}}=\{-1,1\}$. In particular, $r=2$, $a_1=b_1=-1$ and $a_2=b_2=1$ and conditions \eqref{sp2} are satisfied.

Consider now $\xi\colon \R\to \R$ given by $\xi(x)=x^2e^{-x^2}$. Then, $D\xi(x)=2xe^{-x^2}(1-x^2)$ and, consequently, $|D\xi(x)|\leq 1$ and $|D\xi(x)-D\xi(y)|\leq 2|x-y|$. Thus, taking $g_n:\R^2\to \R^2$ as
\[g_n(x_1,x_2)=\frac{c}{n+1}(\xi(x_1),\xi(x_2)), \quad n\in \N,\]
where $c>0$ is a constant, it follows that conditions \eqref{eq: gn 0 and Dgn 0}, \eqref{eq: bound deriv g} and \eqref{4-4-3} are satisfied. In particular, Theorem \ref{theo: main linearization discrete} may be applied whenever $c>0$ is small enough. On the other hand, it is easy to see that the sequence  $\mathbb{A}=(A_n)_{n\in \N}$ does not admit a (nonuniform) strong exponential dichotomy. Therefore, previously known results such as the one in \cite{DZZ} can not be applied to this example.
\end{example}

The previous example shows how easy it is to construct nonautonomous systems with dichotomic behavior to which our result applies, and which were not covered by previous works. In particular, it clearly demonstrates the broad scope of Theorem \ref{theo: main linearization discrete}.

\subsection{Differentiable and H\"older linearization  under $\mu$-dichotomy}
In the next result, we remove the spectral gap condition from our hypotheses (i.e., the first inequality of \eqref{sp2}), and obtain a linearization that is simultaneously differentiable (at 0) and H\"older continuous in a neighborhood of 0.

\begin{theorem}\label{theo: diff+holder linearization}
Suppose that $\mathbb A=(A_n)_{n\in \N}$ and $\Sigma_{\mu D, \mathbb A}$ are given as in Theorem~\ref{theo: main linearization discrete} and that \eqref{sp1} and \eqref{sp3} hold.
Let $(g_n)_{n\in \N}$ be a sequence of $C^1$ maps $g_n\colon \R^d \to \R^d$ such that \eqref{eq: gn 0 and Dgn 0}, \eqref{eq: bound deriv g} and \eqref{4-4-3} hold, and let $\alpha_1$ and $\varrho$ be given in Theorem \ref{t1}.
Then, provided that $c$ is sufficiently small, there exists a sequence of homeomorphisms $\psi_n\colon \mathbb{R}^d\to \mathbb{R}^d$, $n\in \N$, such that \eqref{lincond} holds.
Moreover, there exist constants $ L', \rho'>0$ such that 
\begin{equation}\label{000}
\psi_n(x)=x+ o(\|x\|^{1+\varrho})\; \text{ and }\;
\psi_n^{-1}(x)=x+ o(\|x\|^{1+\varrho})
\end{equation}
as $\|x\|\to 0$ and
\begin{equation}\label{001}
\|\psi_n(x)-\psi_n(y)\|\!\le\!  L'\|x-y\|^{\alpha_1} \; \text{ and }\;
\|\psi_n^{-1}(x)-\psi_n^{-1}(y)\|\!\le\! \ L'  \|x-y\|^{\alpha_1}
\end{equation}
for $x,y\in \mathbb{R}^d$ satisfying $\|x\|,\|y\|\le \rho'$. 
\end{theorem}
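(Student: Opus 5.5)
The plan is to repeat the reduction used for Theorem~\ref{theo: main linearization discrete}, with Theorem~\ref{t1} taking the place of Theorem~\ref{t0}.

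\emph{Step 1: rescale time.} Set $G_n:=A_n+g_n$. As before, $G_n$ is a diffeomorphism once $cK\theta^{a+1}<1$. Let $\mathbb B=(B^\mu_n)_{n\in\N}$ be given by \eqref{eq:B-mu} and let $f_n$ be given by \eqref{fnx}. Nothing in the estimates obtained in the proof of Theorem~\ref{theo: main linearization discrete} uses the spectral gap. These estimates are \eqref{krt}, \eqref{eq: DG is holder}, \eqref{eq: aux mu mu tilde inverse}, the bound $\|Df_n(x)\|\le cC$, and the Lipschitz bound $\|Df_n(x)-Df_n(y)\|\le\tilde M\|x-y\|$. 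Together with $f_n(0)=0$ and $Df_n(0)=0$, which follow from \eqref{eq: gn 0 and Dgn 0} and \eqref{fnx}, they show that $(f_n)$ satisfies \eqref{fzero} and \eqref{fm} with $\eta=cC$. By Theorem~\ref{theo: DS-equal spect}, $\Sigma_{ED,\mathbb B}=\Sigma_{\mu D,\mathbb A}$, so \eqref{sp1} and \eqref{sp3} hold for $\mathbb B$. By Theorem~\ref{theo: equiv exp mu dich} and \eqref{supp}, $\mathbb B$ admits a strong exponential dichotomy. Hence Theorem~\ref{t1} applies for $c$ small. It yields homeomorphisms $h_n$ satisfying \eqref{LIN-proof}, hence also \eqref{lin}, together with the $o(\|x\|^{1+\varrho})$ asymptotics and the $\alpha_1$-Hölder bounds on a ball of radius $\rho_1$.

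\emph{Step 2: define the conjugacy.} Define $\psi_k$ exactly as in \eqref{psik}, with $n$ chosen by \eqref{nn+1}. The verification of \eqref{lincond} is purely algebraic: it uses only \eqref{lin} and the cocycle identities, so it carries over verbatim. Each $\psi_k$ is a homeomorphism with inverse
\[
\psi_k^{-1}=\mathcal G(k,\lfloor\widetilde\mu^{-1}(e^{n})\rfloor+1)\circ h_{n+1}^{-1}\circ\cA(\lfloor\widetilde\mu^{-1}(e^{n})\rfloor+1,k).
\]

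\emph{Step 3: regularity.} Write $N:=\lfloor\widetilde\mu^{-1}(e^{n})\rfloor+1$. By \eqref{eq: aux mu mu tilde inverse}, $\mu_k/\mu_N\le e\theta^2$. So by \eqref{bg}, \eqref{krt} and \eqref{gmnx}, the maps $\cA(k,N)^{\pm1}$ and $\mathcal G(N,k)^{\pm1}$ are Lipschitz with a uniform constant $\Lambda$, and they map the ball of radius $\rho'=\rho_1/\Lambda$ into the ball of radius $\rho_1$. The Hölder estimate \eqref{001} then follows by composition, with $L'=\Lambda^{1+\alpha_1}\tilde L$.

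For \eqref{000}, write $\Phi:=\mathcal G(N,k)$ and $A:=\cA(N,k)$. Since $A^{-1}=\cA(k,N)$,
\[
\psi_k(x)-x=A^{-1}\bigl(h_{n+1}(\Phi x)-\Phi x\bigr)+A^{-1}\bigl(\Phi x-Ax\bigr).
\]
The first term is $o(\|\Phi x\|^{1+\varrho})=o(\|x\|^{1+\varrho})$, uniformly, because $\|\Phi x\|\le\Lambda\|x\|$. For the second term, note that $\Phi(x)-Ax$ is a finite sum of terms $g_j(\cdots)$ with a bounded number of nonzero summands. Since $Dg_j(0)=0$ and $Dg_j$ is Lipschitz by \eqref{4-4-3}, each $g_j(y)$ is $O(\|y\|^2)$. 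Therefore $\Phi(x)-Ax=O(\|x\|^2)=o(\|x\|^{1+\varrho})$ because $\varrho<1$. The estimate for $\psi_k^{-1}$ is symmetric, using $\mathcal G(k,N)$ and its Lipschitz derivative, which is obtained as in \eqref{eq: DG is holder}.

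\emph{Main obstacle.} The delicate point is uniformity in $k$ of the $o(\cdot)$ terms. This requires the little-$o$ in Theorem~\ref{t1} to be uniform in $n$, which is how I read that statement (it comes from the appendix construction). It also requires the bounds on $\mathcal G(N,k)$ to be uniform, which is guaranteed by the bounded ratio \eqref{eq: aux mu mu tilde inverse}.
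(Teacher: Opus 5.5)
Your proposal is correct and follows the paper's proof: the same $\mathbb B$ and $f_n$, Theorem~\ref{t1} in place of Theorem~\ref{t0}, the same $\psi_k$ from \eqref{psik}, and the H\"older bounds by composition. The only cosmetic difference is how you get \eqref{000}. You use the closed-form splitting $\psi_k-\Id=\cA(k,N)\circ(h_{n+1}-\Id)\circ\mathcal G(N,k)+\cA(k,N)\circ(\mathcal G(N,k)-\cA(N,k))$, whereas the paper inducts on $k$ inside each block via $\psi_{k+1}=A_k\circ\psi_k\circ(A_k+g_k)^{-1}$; this is the same computation unrolled.

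One minor inaccuracy: the number $k-N$ of summands in $\mathcal G(N,k)-\cA(N,k)$ is finite for each $k$ but not bounded in $k$ in general. For $\mu_n=1+n$ the blocks have length of order $e^{n+1}-e^n$. This is harmless, since \eqref{000} is asserted for each fixed $n$. If you want uniformity in $k$, use $\|g_j(y)\|\le\tfrac M2\tfrac{\mu_j'}{\mu_j}\|y\|^2$ together with \eqref{nm-1} instead.
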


\begin{proof}
We use the same notation as in the proof of Theorem~\ref{theo: main linearization discrete}. In particular, let $\mathbb B$ and $(f_n)_{n\in \N}$ be as in that proof. By Theorem~\ref{t1}, there exist a sequence of homeomorphisms $h_n\colon \R^d \to \R^d$, $n\in \Z^+$ satisfying~\eqref{LIN-proof} and  constants $\tilde L, \varrho,\rho>0$  such that 
\begin{equation}\label{cd0}
\|h_n(x)-x\|=o(\|x\|^{1+\varrho}) \;\text{ and }\;
\|h_n^{-1}(x)-x\|=o(\|x\|^{1+\varrho})
\end{equation}
as $\|x\|\to 0$ and
\begin{equation}\label{cd1}
\|h_n(x)-h_n(y)\|\!\le\! \tilde L\|x-y\|^{\alpha_1} \;\text{ and }\;
\|h_n^{-1}(x)-h_n^{-1}(y)\|\!\le\! \tilde L \|x-y\|^{\alpha_1},
\end{equation}
for $x,y\in\mathbb{R}^d$ satisfying $\|x\|,\|y\|\le \rho$. 

Next, we may construct a sequence of homeomorphisms $\psi_k \colon \R^d \to \R^d$, $k\in \N$, exactly as in the proof of Theorem~\ref{theo: main linearization discrete} (see~\eqref{psik}) such that~\eqref{lincond} holds. Now, observe that for
$\|x\| \le \frac{\rho}{K(e\theta^2)^{\tilde a}}$, we have that (recall \eqref{eq: aux mu mu tilde inverse} and \eqref{gmnx})
\[
\|\mathcal G(\lfloor \widetilde{\mu}^{-1}(e^n)\rfloor+1, k)(x)\| \le K \left (\frac{\mu_k}{\mu_{\lfloor \widetilde{\mu}^{-1}(\eta_n)\rfloor+1}}\right )^{\tilde a}\|x\| \le K(e\theta^2)^{\tilde a}\|x\| \le \rho.
\]
Therefore, taking $\|x\|,\|y\| \le \frac{\rho}{K(e\theta^2)^{\tilde a}}$ and $n$ as in \eqref{nn+1}, and using \eqref{bg},  \eqref{krt}, \eqref{eq: aux mu mu tilde inverse} and \eqref{cd1}, we have
{\small
\[
\begin{split}
&\| \psi_k(x)-\psi_k(y)\|\\
&=\|\cA (k, \lfloor\widetilde\mu^{-1}(e^{n})\rfloor+1)(h_{n+1}(\mathcal G(\lfloor\widetilde\mu^{-1}(e^{n})\rfloor+1, k)(x)))-\cA (k, \lfloor\widetilde\mu^{-1}(e^{n})\rfloor+1)( h_{n+1} ( \mathcal G(\lfloor\widetilde\mu^{-1}(e^{n})\rfloor+1, k)(y)))\| \\
&\le K\left (\frac{\mu_k}{\mu_{\lfloor \widetilde{\mu}^{-1}(e^n)\rfloor+1}}\right )^a\|h_{n+1}(\mathcal G(\lfloor\widetilde\mu^{-1}(e^{n})\rfloor+1, k)(x))- h_{n+1} ( \mathcal G(\lfloor\widetilde\mu^{-1}(e^{n})\rfloor+1, k)(y))\|\\
&\le \tilde{L} K\left (\frac{\mu_k}{\mu_{\lfloor \widetilde{\mu}^{-1}(e^n)\rfloor+1}}\right )^a\|\mathcal G(\lfloor\widetilde\mu^{-1}(e^{n})\rfloor+1, k)(x)-  \mathcal G(\lfloor\widetilde\mu^{-1}(e^{n})\rfloor+1, k)(y)\|^{\alpha_1}\\
&\le \tilde{L} K^{1+\alpha_1}\left (\frac{\mu_k}{\mu_{\lfloor \widetilde{\mu}^{-1}(e^n)\rfloor+1}}\right )^{a+\alpha_1 \tilde a}\| x-y\|^{\alpha_1}\\
&\le \tilde{L} K^{1+\alpha_1}(e\theta^2)^{a+\alpha_1 \tilde a}\| x-y\|^{\alpha_1}.\\
\end{split}
\]}
Hence, the first estimate in~\eqref{001} is proved by setting $\rho':=\frac{\rho}{K(e\theta^2)^{\tilde a}}$. Similarly, we can establish the second one.

We now claim that for each $k\in \N$,
\begin{equation}\label{cd0-k}
    \|\psi_k(x)-x\|=o(\|x\|^{1+\varrho})\quad \mbox{as $\|x\|\to 0$}.
\end{equation}
When $k=\lfloor\widetilde\mu^{-1}(e^n)\rfloor+1$ for some $n\in \N$, \eqref{cd0-k} follows readily from~\eqref{cd0} since $\psi_k=h_{n+1}$. Assume now that~\eqref{cd0-k} 
holds for some $\lfloor\widetilde\mu^{-1}(e^{n+1})\rfloor +1>  k
\ge \lfloor\widetilde\mu^{-1}(e^n)\rfloor+1
$. By~\eqref{linearization}, we have $\psi_{k+1}=A_k\circ \psi_k \circ (A_k+g_k)^{-1}$. Hence,  
\[
\begin{split}
\psi_{k+1}-\Id &=A_k\circ \psi_k \circ (A_k+g_k)^{-1}-\Id \\
&=A_k\circ (\Id+{\psi}_k-\Id)\circ (A_k^{-1}+(A_k+g_k)^{-1}-A_k^{-1})-\Id \\
&=A_k \circ ((A_k+g_k)^{-1}-A_k^{-1})+A_k\circ ({\psi}_k-\Id)\circ (A_k+g_k)^{-1}.
\end{split}
\]
Now, we estimate each of the summands in the last equality. First, note that 
\[A_k \circ ((A_k+g_k)^{-1}-A_k^{-1})(x)= -g_k\circ(A_k+g_k)^{-1}(x). \]
Moreover, since $Dg_k(0)=0$, it follows by \eqref{eq: grbound} and \eqref{4-4-3} that
\[\|Dg_k(x)\|\leq M \frac{\mu_k'}{\mu_k}\|x\|\leq M\theta\|x\| \; \text{ for } x\in \R^d. \]
Thus, since $g_k(0)=0$, combining the previous observation with the mean-value theorem, we get that
\[\|g_k(x)\|\leq M\theta \|x\|^2 \; \text{ for } x\in \R^d.\]
Putting all this together with \eqref{gmnx}, we get
\begin{align*}
\|A_k \circ ((A_k+g_k)^{-1}-A_k^{-1})(x)\|&= \|g_k\circ(A_k+g_k)^{-1}(x)\| \\
&\le M\theta \|(A_k+g_k)^{-1}(x)\|^2\\
&=o(\|x\|^{1+\varrho})
\end{align*}
as $\|x\|\to 0$ (recall that $\varrho <1$). Moreover, from \eqref{bg}, \eqref{eq: grbound}, \eqref{gmnx} and our assumption that \eqref{cd0-k} holds for $k$, we see that
\[
\begin{split}
&\|(A_k\circ ({\psi}_k-\Id)\circ (A_k+g_k)^{-1})(x)\|
\\
&\le  K\left(\frac{\mu_{k+1}}{\mu_k}\right) ^a\|(({\psi}_k-\Id)\circ (A_k+g_k)^{-1})(x)\|
\\
&= o(\|(A_k+g_k)^{-1}(x)\|^{1+\varrho})
=o(\|x\|^{1+\varrho})
\end{split}
\]
as $\|x\|\to 0$. From the last two estimates, we conclude that~\eqref{cd0-k} holds for $k+1$. By induction, we find that~\eqref{cd0-k} holds for each $k\in \N$, which proves the first estimate of~\eqref{000}. The second one can be proved similarly, and the proof of the theorem is completed. 
\end{proof}

\subsection{The case of infinite-dimensional dynamics}\label{sec: inf dim discrete} 
The purpose of this short subsection is to indicate how one can extend our results to the infinite-dimensional setting. Let $X$ be an arbitrary Hilbert space and $\mathcal B(X)$ be the space of all bounded linear operators on $X$.  

The notions of $\mu$-dichotomy and a strong $\mu$-dichotomy spectrum can be introduced for sequences of invertible operators in $\mathcal B(X)$ as in Section \ref{sec: pol dich}. Moreover, one can establish versions of Theorems \ref{theo: equiv exp mu dich} and \ref{theo: DS-equal spect} in an infinite-dimensional setting by arguing exactly as in the finite-dimensional setting. Indeed, note that in the proofs of these results, finite dimensionality was never used.
However, in the infinite-dimensional setting, $\Sigma_{ED, \mathbb A}$ may not have the form given in \eqref{EDB}. Hence, one can formulate versions of Theorems~\ref{theo: main linearization discrete}
and \ref{theo: diff+holder linearization} in the infinite-dimensional case by assuming an additional condition that $\Sigma_{\mu D, \mathbb A}$ has the form~\eqref{EDB-mu} (which in the finite-dimensional setting is automatically satisfied).

Finally, we note that the preparatory results Theorem~\ref{t0} and Theorem~\ref{Ap} can be established in the same manner. We only emphasize that the assumption that $X$ is a Hilbert space would be used in the construction of (closed) subspaces $S_i$ appearing in the proof of Theorem~\ref{t0}.

\section{The case of continuous time}\label{sec: continuous time case}
In this section, we present versions of Theorems \ref{theo: main linearization discrete} and \ref{theo: diff+holder linearization} in the case of continuous time dynamical systems. 

Let us consider a linear nonautonomous equation 
\begin{equation}\label{LDE}
x'=A(t)x, \quad t\ge 1,
\end{equation}
where $A:[1,\infty)\to \R^{d\times d}$ is a continuous map acting on $[1, \infty)$ and with values in the family of linear operators on $\R^d$. By $T(t,s)$ we denote the evolution family associated with~\eqref{LDE}.
 Moreover, given a continuous function  $f\colon [1, \infty) \times \R^d \to \R^d$, we consider the following semilinear differential equation
\begin{equation}\label{NDE}
x'=A(t)x+f(t,x), \quad t\ge 1.
\end{equation}

\subsection{Growth rates and $\mu$-dichotomies for continuous time dynamics} We say that a function $\mu\colon [1,+\infty)\to (0,+\infty)$ is a \emph{growth rate} if it is strictly increasing and $\lim_{t\to +\infty}\mu(t)=+\infty$. Moreover, if $\mu$ is differentiable, we say that it is a \emph{differentiable growth rate}.

\begin{definition}\label{def: nonunif pol dich continuous}
We say that~\eqref{LDE} (or equivalently, that the evolution family $(T(t,s))_{t,s\geq 1}$) admits a \emph{$\mu$-dichotomy} if there exist $K,\lambda>0$ and a family of projections $P(t)$, $t\ge 1$ on $\R^d$ such that the following properties hold:
\begin{itemize}
\item  for $t\ge s\ge 1$,
\begin{equation}\label{Pro}
P(t)T(t,s)=T(t,s)P(s);
\end{equation}
\item for $t\ge  s$,
\begin{equation}\label{B1}
\| T(t,s)P(s)\| \le K \bigg (\frac{\mu(t)}{\mu(s)}\bigg )^{-\lambda},
\end{equation}
and 
\begin{equation}\label{B2}
\|T(s,t)Q(t)\| \le K \bigg (\frac{\mu(t)}{\mu(s)}\bigg )^{-\lambda},
\end{equation}
where $Q(t)=\Id-P(t)$.
\end{itemize}
In addition, if there exists $a\geq \lambda$ such that for $t\ge s$,
\begin{equation}\label{bnd}
\| T(t,s)\| \le K\left(\frac{\mu(t)}{\mu(s)} \right)^a \quad \text{and} \quad \| T(s,t)\| \le K \left(\frac{\mu(t)}{\mu(s)}\right)^a,
\end{equation}
we say that \eqref{LDE} admits a \emph{strong $\mu$-dichotomy}.
\end{definition}
Analogously to the discrete time case, by taking $\mu(t)=e^t$, $t\geq 1$, we recover the classical notion of \emph{(strong) exponential dichotomy}; by taking $\mu(t)=1+t$, $t\geq 1$, we recover the notion of \emph{(strong) polynomial dichotomy}; and finally, by taking $\mu(t)=\ln(e+t)$, $t\geq 1$, we recover the notion of \emph{(strong) logarithmic dichotomy}.

In the next proposition, we present a relationship between the dynamics of \eqref{LDE} and a suitable discretization of it. 
\begin{proposition} \label{prop: equiv pol dich contXdisc}
Assume that there exist $K,a>0$ such that the evolution family $T(t,s)$ of \eqref{LDE} satisfies \eqref{bnd}. Moreover, suppose that the growth rate $\mu$ satisfies
\begin{equation}\label{eq: grbound-cont}
	\frac{\mu(t+1)}{\mu (t)}\le \theta, \quad \text{for all} \quad t\geq 1,
\end{equation} 
for some $\theta\ge 1$
Then, the following properties are equivalent:
\begin{itemize}
\item \eqref{LDE} admits a strong $\mu$-dichotomy;
\item the sequence $\mathbb A =(A_n)_{n\in \N}$ admits a strong $\mu$-dichotomy, where
\begin{equation}\label{AN}
A_n=T(n+1, n), \quad n\in \N.
\end{equation}
\end{itemize}
\end{proposition}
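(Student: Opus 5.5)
The plan is to prove the two implications separately. The discrete growth rate is taken to be $\mu_n:=\mu(n)$ for $n\in\N$ (after a harmless normalisation so that $\mu_1=1$, which only affects constants). Two facts are used throughout. First, the discrete evolution family of $\mathbb A$ is $\cA(m,n)=T(m,n)$ for $m,n\in\N$. Second, for $t\ge 1$ we write $\lfloor t\rfloor$ for the integer part, so that $t-\lfloor t\rfloor\in[0,1)$. Condition \eqref{eq: grbound-cont} together with monotonicity of $\mu$ gives
\[
1\le \frac{\mu(t)}{\mu(\lfloor t\rfloor)}\le \theta \quad\text{and}\quad 1\le \frac{\mu(\lfloor t\rfloor+1)}{\mu(t)}\le\theta .
\]
Combined with \eqref{bnd}, this yields $\|T(t,\lfloor t\rfloor)\|\le K\theta^a$ and $\|T(\lfloor t\rfloor,t)\|\le K\theta^a$, and similarly for the pairs $(\lfloor t\rfloor+1,t)$ and $(t,\lfloor t\rfloor+1)$. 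These bounds let us move between integer and real times at the cost of a multiplicative constant.

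\emph{Continuous $\Rightarrow$ discrete.} Set $P_n:=P(n)$. The invariance \eqref{pro} follows from \eqref{Pro} with $t=n+1$, $s=n$. Estimates \eqref{pd1} and \eqref{bg} are simply the restrictions of \eqref{B1}, \eqref{B2} and \eqref{bnd} to integer times, with the same $K,\lambda,a$. So this direction is immediate.

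\emph{Discrete $\Rightarrow$ continuous.} Given projections $P_n$ for $\mathbb A$, we define
\[
P(t):=T(t,\lfloor t\rfloor)P_{\lfloor t\rfloor}T(\lfloor t\rfloor,t).
\]
These are projections. For the invariance \eqref{Pro}, take $t\ge s$ and write $T(t,s)=T(t,\lfloor t\rfloor)\cA(\lfloor t\rfloor,\lfloor s\rfloor)T(\lfloor s\rfloor,s)$. The discrete invariance $\cA(m,n)P_n=P_m\cA(m,n)$, which follows from \eqref{pro} by iteration, then gives the claim, and $Q(t)$ is the analogous conjugate of $Q_{\lfloor t\rfloor}$.

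For \eqref{B1}, we bound
\[
\|T(t,s)P(s)\|\le \|T(t,\lfloor t\rfloor)\|\,\|\cA(\lfloor t\rfloor,\lfloor s\rfloor)P_{\lfloor s\rfloor}\|\,\|T(\lfloor s\rfloor,s)\|\le K^3\theta^{2a}\Bigl(\frac{\mu(\lfloor t\rfloor)}{\mu(\lfloor s\rfloor)}\Bigr)^{-\lambda}.
\]
Since $\mu(\lfloor t\rfloor)\ge \mu(t)/\theta$ and $\mu(\lfloor s\rfloor)\le\mu(s)$, the last factor is at most $\theta^\lambda(\mu(t)/\mu(s))^{-\lambda}$. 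For \eqref{B2} we argue in the same way, using $\|T(s,t)Q(t)\|\le\|T(s,\lfloor s\rfloor)\|\,\|\cA(\lfloor s\rfloor,\lfloor t\rfloor)Q_{\lfloor t\rfloor}\|\,\|T(\lfloor t\rfloor,t)\|$. For \eqref{bnd}, we factor $T(t,s)$ and $T(s,t)$ through $\lfloor t\rfloor,\lfloor s\rfloor$ and apply \eqref{bg}, absorbing the resulting powers of $\theta$ into the constant. In every case the new constant is of the form $K^3\theta^{c(a,\lambda)}$.

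The main obstacle is bookkeeping rather than a conceptual difficulty. Each use of \eqref{bnd} must go in the correct time direction (forward versus backward within a unit interval), and every ratio $\mu(\lfloor t\rfloor)/\mu(\lfloor s\rfloor)$ has to be compared with $\mu(t)/\mu(s)$ in the right direction for the sign of the exponent: $-\lambda$ for \eqref{B1} and \eqref{B2}, $+a$ for \eqref{bnd}. The two-sided bound on $\mu(t)/\mu(\lfloor t\rfloor)$ stated at the start handles all of these cases uniformly.
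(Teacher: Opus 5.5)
Your proposal is correct and follows the paper's proof essentially step for step: restriction to integer times for the first implication, and for the converse the conjugated projections $P(t)=T(t,\lfloor t\rfloor)P_{\lfloor t\rfloor}T(\lfloor t\rfloor,t)$ together with the three-factor estimate, which gives the constant $K^3\theta^{2a+\lambda}$. The only difference is that you re-derive \eqref{bnd} by factoring through integer times; this is unnecessary, since \eqref{bnd} is already a hypothesis of the proposition.
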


\begin{proof}
Taking $\cA(m,n)$ as in \eqref{eq: cocycle} with $(A_n)_{n\in \N}$ given by \eqref{AN}, we see that
\begin{equation}\label{AT}
\cA(m,n)=T(m,n), \quad m, n\in \N.
\end{equation}
Assume that~\eqref{LDE} admits a strong $\mu$-dichotomy with projections $P(t)$, $t\ge 1$. By~\eqref{B1} and~\eqref{AT}, we have
\[
\| \cA(m,n)P(n)x\| \le K \bigg (\frac{\mu(m)}{\mu (n)}\bigg )^{-\lambda}\|x\|, \quad m\ge n \text{ and }x\in\R^d.
\]
Similarly, \eqref{B2} and~\eqref{AT} imply that 
\[
\| \cA(n,m)Q(m)x\| \le K \bigg (\frac{\mu(m)}{\mu (n)}\bigg )^{-\lambda}\|x\|, \quad m\ge n \text{ and }x\in\R^d.
\]
Moreover, by \eqref{bnd} and~\eqref{AT}, we get that for every $x\in \R^d$ and $m\ge n$,
\begin{equation*}
\| \cA(m,n)x\| \le {K}\bigg (\frac{\mu (m)}{\mu(n)}\bigg )^{a}\|x\| \quad \text{and} \quad \| \cA(n,m)x\| \le {K}\bigg (\frac{\mu (m)}{\mu(n)}\bigg )^{a}\|x\|.
\end{equation*}
Consequently, we conclude that the sequence $(A_n)_{n\in \N}$ admits a strong $\mu$-dichotomy with projections $P(n)$, $n\in \N$.

Assume now that the sequence $(A_n)_{n\in \N}$ admits a strong $\mu$-dichotomy with projections $P_n$, $n\in \N$. Hence, there exist $K>0$ and $a\geq \lambda >0$ such that~\eqref{pro}, \eqref{pd1} and~\eqref{bg} hold.
For $n\in \N$ and $t\in [n, n+1)$, we set
\[
P(t)=T(t,n)P_n T(n,t).
\]
One can easily verify that $P(t)$ is a projection for each $t\ge 1$.
Take $t,s\geq 1$ and assume $t\geq s$. We choose $m, n\in \N$ such that $m\le t<m+1$ and $n\le s <n+1$. Clearly, $m\ge n$. Then
\[
\begin{split}
T(t,s)P(s)&=T(t,s)T(s,n)P_nT(n, s)\\
&=T(t,n)P_n T(n,s) =T(t,m)\cA(m,n)P_n T(n,s)\\
&=T(t,m)P_m\cA(m,n) T(n,s)=T(t,m)P_m T(m,t)T(t,s)\\
&=P(t)T(t,s).
\end{split}
\]
In particular, \eqref{Pro} holds. 

Moreover, the previous calculations combined with \eqref{pd1}, \eqref{bnd}, and \eqref{eq: grbound-cont} also show that for $x\in \R^d$,
\[
\begin{split}
\|T(t,s)P(s)x \| &=\| T(t,m)\cA(m,n)P_n T(n,s)x\|\\
&\leq K \left(\frac{\mu(t)}{\mu(m)}\right)^a K\left(\frac{\mu(m)}{\mu(n)}\right)^{-\lambda} K\left(\frac{\mu(s)}{\mu(n)}\right)^a\|x\|    \\
&\le K^3 \theta^{2a} \bigg (\frac{\mu(m)}{\mu(n)}\bigg )^{-\lambda}\|x\|
\le K^3 \theta^{2a+\lambda} \bigg (\frac{\mu(t)}{\mu (s)} \bigg )^{-\lambda}\|x\|,
\\
\end{split}
\]
which yields~\eqref{B1}. Similarly, one can establish~\eqref{B2}. Finally, noting that \eqref{bnd} is satisfied by our hypotheses, it follows that \eqref{LDE} admits a strong $\mu$-dichotomy as claimed. This completes the proof of the proposition.
\end{proof}

\subsection{$\mu$-dichotomy spectrum} Let $\mu\colon [1, +\infty)\to (0,+\infty)$ be a \emph{differentiable} growth rate.
\begin{definition}
  We define   $\Sigma_{\mu D, A(\cdot)}$ to be the set of all $\tau \in \R$ with the property that the system
\begin{equation}\label{LDE2}
x'=\bigg (A(t)-\tau \frac{\mu'(t)}{\mu(t)} \Id \bigg)x, \quad t\ge 1
\end{equation}
does not admit a strong $\mu$-dichotomy. $\Sigma_{\mu D, A(\cdot)}$ is called the \emph{strong $\mu$-dichotomy spectrum} of \eqref{LDE}.
\end{definition}

Given $\tau\in\R$, let us consider 
\[
T_\tau(t,s)=\bigg (\frac{\mu (t)}{\mu(s)}\bigg )^{-\tau}T(t,s).
\]
Then, differentiating with respect to $t$,
\[
\begin{split}
T_\tau(t,s)'
& =-\tau\left(\frac{\mu(t)}{\mu(s)}\right)^{-\tau-1}\frac{\mu'(t)}{\mu(s)}T(t,s)+\left(\frac{\mu(t)}{\mu(s)}\right)^{-\tau}T(t,s)'\\
& =\left(A(t)-\tau\frac{\mu'(t)}{\mu(t)}\Id \right)\left(\frac{\mu(t)}{\mu(s)}\right)^{-\tau}T(t,s)\\
& =\left(A(t)-\tau\frac{\mu'(t)}{\mu(t)}\Id\right)T_\tau(t,s),\\
\end{split}
\]
and $T_\tau(t,s)$ is the evolution family of~\eqref{LDE2}. Moreover, we have that
\[
T_\tau(n+1,n)=\bigg (\frac{\mu(n+1)}{\mu(n)}\bigg)^{-\tau}T(n+1,n), \quad n\in \N.
\]
Combining this observation with Proposition  \ref{prop: equiv pol dich contXdisc}, we see that, if $T(t,s)$ satisfies \eqref{bnd} and $\mu$ satisfies \eqref{eq: grbound-cont} for some $\theta\geq 1$, then the following properties are equivalent:
\begin{itemize}
\item the sequence $\left(\left(\frac{\mu(n+1)}{\mu(n)}\right)^{-\tau}T(n+1,n)\right)_{n\in \N}$ does not admit a strong $\mu$-dichotomy;
\item \eqref{LDE2} does not admit a strong $\mu$-dichotomy.
\end{itemize}

The following result is a direct consequence of the above fact.

\begin{corollary}\label{cor: equal spectrum cont}
Assume that there exist $K, a>0$ such that~\eqref{bnd} holds and that $\mu$ satisfies \eqref{eq: grbound-cont} for some $\theta\geq 1$. Then
\[
\Sigma_{\mu D, A(\cdot)}=\Sigma_{\mu D, \mathbb A},
\]
where the sequence $\mathbb A=(A_n)_{n\in \N}$ is given by~\eqref{AN}. In particular, $\Sigma_{\mu D, A(\cdot)}$ is given by~\eqref{EDB-mu} with $1\leq r\le d$ and $a_1\leq b_1<a_2\leq \ldots < a_r\leq b_r$.
\end{corollary}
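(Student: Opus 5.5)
The plan is to compare the two spectra pointwise in $\tau$, using the equivalence established just before the statement. First fix $\tau\in\R$ and check that the hypotheses of Proposition~\ref{prop: equiv pol dich contXdisc} hold for the rescaled equation~\eqref{LDE2}. Its evolution family is $T_\tau(t,s)=(\mu(t)/\mu(s))^{-\tau}T(t,s)$, so from~\eqref{bnd} we get, for $t\ge s$,
\[
\|T_\tau(t,s)\|\le K\Big(\frac{\mu(t)}{\mu(s)}\Big)^{a+|\tau|} \quad\text{and}\quad \|T_\tau(s,t)\|\le K\Big(\frac{\mu(t)}{\mu(s)}\Big)^{a+|\tau|},
\]
since $\mu(t)/\mu(s)\ge 1$. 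Thus \eqref{bnd} holds for $T_\tau$ with exponent $a+|\tau|$, and \eqref{eq: grbound-cont} is unchanged. Proposition~\ref{prop: equiv pol dich contXdisc} then says that \eqref{LDE2} admits a strong $\mu$-dichotomy if and only if the sequence $(T_\tau(n+1,n))_{n\in\N}=\big((\mu(n+1)/\mu(n))^{-\tau}A_n\big)_{n\in\N}$ does, with $A_n$ as in~\eqref{AN}.

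Second, note that this last sequence is exactly the system used in Definition~\ref{def: mu spect} for $\mathbb A$, with the discrete growth rate $\mu_n:=\mu(n)$. Here one should normalise $\mu_1=1$ (or the indexing base point) by dividing $\mu$ by a constant, which changes neither the dichotomy estimates nor $\mu'/\mu$. Taking complements over $\tau$ then gives $\Sigma_{\mu D, A(\cdot)}=\Sigma_{\mu D,\mathbb A}$.

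For the structural claim, apply \eqref{AT} and \eqref{bnd} to see that $\mathbb A$ satisfies \eqref{bg}. The discrete sequence $(\mu(n))_n$ satisfies \eqref{eq: grbound}, since it is inherited from \eqref{eq: grbound-cont}. Theorems~\ref{theo: equiv exp mu dich} and~\ref{theo: DS-equal spect} then give $\Sigma_{\mu D,\mathbb A}=\Sigma_{ED,\mathbb B}$, and the Sacker--Sell type description~\eqref{EDB} yields~\eqref{EDB-mu}.

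The only step needing care is the applicability of Proposition~\ref{prop: equiv pol dich contXdisc} to the $\tau$-shifted family, because the exponent in the bounded-growth condition shifts by $|\tau|$. There is also a minor bookkeeping issue in matching the index set ($\N$ versus $\Z^+$) and the normalisation $\mu_0=1$ of a discrete growth rate. Both are handled by the estimate displayed above and by rescaling $\mu$ by a constant.
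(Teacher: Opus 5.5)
Your proposal is correct and follows the paper's own route. For each $\tau$ you apply Proposition~\ref{prop: equiv pol dich contXdisc} to the rescaled family $T_\tau$, identify $T_\tau(n+1,n)=(\mu(n+1)/\mu(n))^{-\tau}A_n$ with the system in Definition~\ref{def: mu spect}, and get the interval structure from Theorems~\ref{theo: equiv exp mu dich}, \ref{theo: DS-equal spect} and \eqref{EDB}, exactly as the paper does. Your explicit checks fill in steps the paper leaves tacit: that $T_\tau$ satisfies \eqref{bnd} with exponent $a+|\tau|$, and that normalising or reindexing $\mu$ leaves all the relevant ratios unchanged.
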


\subsection{$C^1$ linearization under $\mu$-dichotomy} The following is a version of Theorem \ref{theo: main linearization discrete} in the continuous time setting.
\begin{theorem}\label{theo: linarization continuous time pol}
Let $\mu$ be a differentiable growth rate satisfying \eqref{eq: grbound-cont} for some $\theta\geq 1$ and suppose that \eqref{LDE} admits a strong $\mu$-dichotomy. Assume that $\Sigma_{\mu D, A(\cdot)}$ is given by~\eqref{EDB-mu}, where $a_i$ and $b_i$ satisfy~\eqref{sp1} and~\eqref{sp2}.
 Furthermore, suppose that $f\colon [1, \infty)\times \R^d \to \R^d$ is a $C^1$ map satisfying the following conditions:
\begin{itemize}
\item for $t\ge 1$,
\begin{equation}\label{1950}
f(t,0)=0 \quad \text{and} \quad D_x f(t,0)=0;
\end{equation}
\item there exists $\eta >0$ such that 
\begin{equation}\label{1951}
\| D_x f(t,x)\| \le \eta \frac{\mu'(t)}{\mu(t)}, \quad \text{for $x\in \R^d$};
\end{equation}
\item there exists $L>0$ such that 
\begin{equation}\label{1952}
\|D_x f(t,x)-D_x f(t, y)\| \le L\frac{\mu'(t)}{\mu(t)} \|x-y\|, \quad \text{for $x, y\in \R^d$}.
\end{equation}
\end{itemize}
Then, provided that $\eta$ is sufficiently small,  there exist $C^1$ maps $H, G\colon [1, \infty) \times \R^d \to \R^d$ with the following properties:
\begin{itemize}
\item[i)] if $t\mapsto x(t)$ is a solution of~\eqref{NDE}, then $t\mapsto H(t, x(t))$ is a solution of~\eqref{LDE};
\item[ii)] if $t\mapsto x(t)$ is a solution of~\eqref{LDE}, then $t\mapsto G(t, x(t))$ is a solution of~\eqref{NDE};
\item[iii)] for $t\ge 1$ and $x\in \R^d$, 
\[
H(t, G(t,x))=x \quad \text{and} \quad G(t, H(t,x))=x;
\]
\item[iv)] there exist $R, \zeta>0$ such that 
\[
\|D_xH(t,x)\|\leq R \; \text{ and }\; \|D_xG(t,x)\| \leq R , 
\]
for every $t\geq 1$ and $x\in \R^d$ that satisfies $\|x\|\leq \zeta$.
\end{itemize}
\end{theorem}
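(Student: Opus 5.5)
We reduce the statement to the discrete-time Theorem~\ref{theo: main linearization discrete} by time-one discretization, and then interpolate the resulting conjugacies along the flow.

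Let $T(t,s)$ be the evolution family of~\eqref{LDE}. Let $\Phi(t,s)$ denote the (nonlinear) evolution of~\eqref{NDE}; it is globally defined because $f(t,\cdot)$ is globally Lipschitz by~\eqref{1951}. Set $A_n:=T(n+1,n)$ as in~\eqref{AN} and $G_n:=\Phi(n+1,n)$. Define $g_n:=G_n-A_n$, so that the variation of constants formula gives
\[
g_n(x)=\int_n^{n+1}T(n+1,s)f(s,\Phi(s,n)x)\,ds .
\]
By Proposition~\ref{prop: equiv pol dich contXdisc}, the sequence $\mathbb A$ admits a strong $\mu$-dichotomy. By Corollary~\ref{cor: equal spectrum cont}, $\Sigma_{\mu D,\mathbb A}=\Sigma_{\mu D,A(\cdot)}$, so~\eqref{sp1} and~\eqref{sp2} hold for $\mathbb A$. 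Moreover $\mu_n:=\mu(n)$ is a growth rate satisfying~\eqref{eq: grbound}. Clearly $g_n(0)=0$ and $Dg_n(0)=0$, using~\eqref{1950} and $\Phi(s,n)0=0$.

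The main obstacle is verifying~\eqref{eq: bound deriv g} and~\eqref{4-4-3} for $g_n$. Differentiating gives
\[
D_x\Phi(t,n)=T(t,n)+\int_n^t T(t,s)D_xf(s,\Phi(s,n)x)D_x\Phi(s,n)\,ds .
\]
Using~\eqref{bnd} and~\eqref{1951}, a continuous Gronwall argument yields $\|D_x\Phi(t,s)\|\le K(\mu(t)/\mu(s))^{a+K\eta}$, the continuous analogue of~\eqref{krt}, since $\int_s^t \mu'/\mu=\log(\mu(t)/\mu(s))$. Consequently $\|Dg_n(x)\|\le C\eta\int_n^{n+1}\mu'(s)/\mu(s)\,ds=C\eta\log(\mu_{n+1}/\mu_n)$, with $C$ depending on $K,a,\theta$. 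A second Gronwall argument, mirroring the derivation of~\eqref{eq: DG is holder} and using~\eqref{1952}, bounds the difference $\|Dg_n(x)-Dg_n(y)\|$ by $C'L\log(\mu_{n+1}/\mu_n)\|x-y\|$. The delicate point is converting $\log(\mu_{n+1}/\mu_n)$ into $\mu_n'/\mu_n=(\mu_{n+1}-\mu_n)/\mu_n$. This works because $\log(1+u)\le u$, so $\log(\mu_{n+1}/\mu_n)\le \mu_n'/\mu_n$. Hence~\eqref{eq: bound deriv g} holds with $c=C\eta$, and~\eqref{4-4-3} holds as well.

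For $\eta$ small, Theorem~\ref{theo: main linearization discrete} then provides $C^1$-diffeomorphisms $\psi_n$ with $\psi_{n+1}\circ G_n=A_n\circ\psi_n$ and bounds~\eqref{hn-1}. Define, for $t\in[n,n+1)$,
\[
H(t,x):=T(t,n)\,\psi_n(\Phi(n,t)x),\qquad G(t,x):=\Phi(t,n)\,\psi_n^{-1}(T(n,t)x).
\]
These are $C^1$ in $(t,x)$ on each $[n,n+1)$. The relation $\psi_{n+1}\circ\Phi(n+1,n)=T(n+1,n)\circ\psi_n$ shows that the definitions agree at $t=n+1$, so the maps glue continuously. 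Moreover, along solutions each map is given by one global formula, so regularity in $t$ across integers follows.

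Properties i)–iii) are then immediate. For instance, if $x(t)$ solves~\eqref{NDE}, then $\Phi(n,t)x(t)=x(n)$, so $H(t,x(t))=T(t,n)\psi_n(x(n))$, which solves~\eqref{LDE}. Property iv) follows from~\eqref{hn-1} together with the bounds $\|T(t,n)\|\le K\theta^a$ and $\|D_x\Phi(n,t)\|\le K\theta^{a+K\eta}$. Taking $\zeta$ small enough that $\Phi(n,t)x$, and likewise $T(n,t)x$, lies in the ball of radius $\rho$, we obtain $R=K^2\theta^{2a+K\eta}T$.
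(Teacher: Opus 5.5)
Your proposal is correct and follows essentially the same route as the paper. Both use the time-one discretization $A_n=T(n+1,n)$ with $g_n=\Phi(n+1,n)-A_n$, Gronwall estimates to verify the hypotheses of the discrete theorem, and the same formulas for $H$ and $G$. The differences are minor: you convert $\log(\mu(n+1)/\mu(n))$ into $(\mu(n+1)-\mu(n))/\mu(n)$ via $\log(1+u)\le u$, whereas the paper bounds $\mu(r)\ge\mu(n)$ inside the integral. You also address the gluing across integers, which the paper does not check. That remark is best made precise by noting that the relation $\psi_{k+1}=A_k\circ\psi_k\circ G_k^{-1}$ gives $H(t,x)=T(t,1)\psi_1(\Phi(1,t)x)$ and $G(t,x)=\Phi(t,1)\psi_1^{-1}(T(1,t)x)$ for all $t\ge 1$, so both maps are given by a single $C^1$ formula.
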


\begin{proof} The general idea of the proof consists of using the discretization of \eqref{LDE} given by \eqref{AN} to obtain Theorem \ref{theo: linarization continuous time pol} as a consequence of Theorem \ref{theo: main linearization discrete}.

Let $(A_n)_{n\in \N}$ be the sequence of operators on $\R^d$ given by~\eqref{AN}. Combining our assumptions with Proposition \ref{prop: equiv pol dich contXdisc} and Corollary \ref{cor: equal spectrum cont}, we easily see that $(A_n)_{n\in \N}$ satisfies the hypotheses of Theorem \ref{theo: main linearization discrete}. Let $t\mapsto \varphi (t, t_0; x_0)$ denote the solution of~\eqref{NDE} satisfying $x(t_0)=x_0$. Hence, by the variation of constants formula, we have that 
\begin{align}\label{def-phi}
\phi(t, n; x)=T(t,n)x+\int_n^t T(t,r)f(r, \phi(r, n;x))\, dr.
\end{align}
For $n\in \N$ and $x\in \R^d$, set 
\begin{equation}\label{eq: def gn cont case}
g_n(x)=\phi(n+1, n;x)-A_n x=\int_n^{n+1}T(n+1, r)f(r, \phi(r,n;x))\, dr.
\end{equation}
We will now verify that this sequence of $C^1$ maps $g_n:\mathbb{R}^d\to \mathbb{R}^d$, $n\in \N$ satisfies all the assumptions of Theorem~\ref{theo: main linearization discrete}.

It follows from~\eqref{1950} that  $\phi(t, t_0;0)=0$, and thus $g_n(0)=0$ for $n\in \N$. Moreover, 
\[
\begin{split}
Dg_n(0)
&=\int_n^{n+1}T(n+1, r)D_xf(r, 0)D_x \phi(r,n;0)\, dr =0,
\end{split}
\]
for $n\in \N$. We conclude that~\eqref{eq: gn 0 and Dgn 0} is true.

 Observe that 
\begin{equation}\label{dphi}
D_x\phi(t, n; x)=T(t,n)+\int_n^t T(t,r)D_xf(r, \phi(r, n;x))D_x\phi(r, n;x)\, dr
\end{equation}
for $t\ge n$ and $x\in \R^d$.  In particular, from~\eqref{bnd}, \eqref{eq: grbound-cont} and~\eqref{1951} we obtain that for every $t\in [n, n+1]$ and $x\in \R^d$,
\[
\begin{split}
\lVert D_x\phi(t, n; x) \rVert&\le K\bigg (\frac{\mu (t)}{\mu (n)} \bigg )^a  + \int_n^t K \bigg (\frac{\mu(t)}{\mu(r)} \bigg )^a \eta \frac{\mu'(r)}{\mu(r)} \lVert D_x\phi(r, n;x)\rVert\, dr
\\
&\le K\theta^a  +\int_n^t K\eta  \theta^a  \frac{\mu'(r)}{\mu(r)} \lVert D_x\phi(r, n;x)\rVert\, dr.
\end{split}
\]
 Hence, from Gronwall's lemma it follows that 
$\lVert D_x\phi(t, n; x) \rVert \le K \theta^a  e^{\int_n^t K\eta  \theta^a  \frac{\mu'(r)}{\mu(r)}\, dr}$. Thus, since (using \eqref{eq: grbound-cont})
\begin{equation}\label{eq: aux int mu' mu}
\int_n^t   \frac{\mu'(r)}{\mu(r)}\, dr\leq  \ln \mu (n+1)-\ln \mu(n)\leq \ln \theta\leq  \theta,
\end{equation}
we get 
\begin{equation}\label{910x}
\lVert D_x\phi(t, n; x) \rVert \le \hat M 
\end{equation}
for every $t\in [n, n+1]$ and $x\in \R^d$, where $\hat M:=K\theta^ae^{K\eta \theta^{a+1} }$ (which, in particular, is independent of $n$).
On the other hand, note that
\begin{equation}\label{Dfm}
Dg_n(x)=\int_n^{n+1}T(n+1, r)D_xf(r, \phi(r, n;x))D_x\phi(r, n;x)\, dr.
\end{equation}
Then, combining \eqref{bnd}, \eqref{eq: grbound-cont}, \eqref{1951}, \eqref{910x} with \eqref{Dfm} we get
\[
\begin{split}
\lVert Dg_n(x) \rVert &\le \int_n^{n+1}K \bigg (\frac{\mu(n+1)}{\mu(r)} \bigg )^a  \eta \frac{\mu'(r)}{\mu(r)} \hat M  \, dr \\
&\le \frac{K\hat M \theta^{a}\eta }{\mu(n)} \int_n^{n+1}\mu'(r)dr =K\hat M \theta^{a}\eta \left( \frac{\mu(n+1)-\mu(n)}{\mu(n)} \right),
\end{split}
\]
for each $n\in \N$ and $x\in \R^d$. We conclude that \eqref{eq: bound deriv g} holds with $c:=K\hat M\theta^{a}\eta >0$, which can be made sufficiently small by taking $\eta$ sufficiently small. 

Next, we observe that~\eqref{dphi} implies that 
\[
\begin{split}
&D_x\phi(t, n;x)-D_x\phi(t, n; y)
\\
&=\int_n^t T(t,r)D_xf(r, \phi(r, n;x))D_x\phi(r, n;x)\, dr \\
&\phantom{=}-\int_n^t T(t,r)D_xf(r, \phi(r, n;y))D_x\phi(r, n;y)\, dr \\
&=\int_n^t T(t,r)D_xf(r, \phi(r, n;x))(D_x\phi(r, n;x)-D_x\phi(r, n;y))\, dr \\
&\phantom{=}+\int_n^t T(t,r)(D_xf(r, \phi(r, n;x))-D_xf(r, \phi(r, n;y)))D_x\phi(r, n;y)\, dr.
\end{split}
\]
Hence, it follows from~\eqref{bnd}, \eqref{eq: grbound-cont}, \eqref{1951}, \eqref{1952}, \eqref{eq: aux int mu' mu} and~\eqref{910x} (together with the
mean-value theorem) that
\[
\begin{split}
&\lVert D_x\phi(t, n;x)-D_x\phi(t, n; y)  \rVert
\\
&\leq \int_n^t K \bigg (\frac{\mu(t)}{\mu(r)}\bigg )^a L\frac{\mu'(r)}{\mu(r)} \lVert \phi(r, n;x)-\phi(r, n;y)\rVert \hat M \, dr\\
&\phantom{=}+\int_n^t K \bigg (\frac{\mu(t)}{\mu(r)}\bigg )^a \eta \frac{\mu'(r)}{\mu(r)} \lVert D_x\phi(r, n;x)-D_x\phi(r, n;y)\rVert \, dr\\
&\le KL \theta^{a+1}\hat M^2\|x-y\|+ \int_n^t K\theta^a \eta \frac{\mu'(r)}{\mu(r)} \lVert D_x\phi(r, n;x)-D_x\phi(r, n;y)\rVert \, dr,
\end{split}
\]
for $t\in [n, n+1]$ and $x, y\in \R^d$.   By Gronwall's inequality again, one can conclude that there exists $\check M\geq 1$ such that
\begin{equation}\label{Df-Df}
\lVert D_x\phi(t, n;x)-D_x\phi(t, n; y)  \rVert  \le  \check M\lVert x-y\rVert,
\end{equation}
for $n\in \N$, $t\in [n, n+1]$ and $x, y\in \R^d$.

Moreover, since
\[
\begin{split}
&Dg_{n}(x)-Dg_{n}(y)
\\
&=\int_n^{n+1}T(n+1, r)D_xf(r, \phi(r, n;x))D_x\phi(r, n;x)\, dr \\
&\phantom{=}-\int_n^{n+1}T(n+1, r)D_xf(r, \phi(r, n;y))D_x\phi(r, n;y)\, dr \\
&=\int_n^{n+1}T(n+1, r)D_xf(r, \phi(r, n;x))(D_x\phi(r, n;x)-D_x\phi(r, n;y))\, dr \\
&\phantom{=}+\int_n^{n\!+\!1}T(n+1, r)(D_xf(r, \phi(r, n;x))
\!-\!D_xf(r, \phi(r, n;y)))D_x\phi(r, n;y)\, dr, \\
\end{split}
\]
we obtain from \eqref{bnd}, \eqref{eq: grbound-cont}, \eqref{1951}, \eqref{1952}, \eqref{910x} and \eqref{Df-Df} (together with the mean-value
theorem) that
\[
\begin{split}
&\lVert Dg_{n}(x)-Dg_{n}(y)  \rVert
\\
&\le \int_n^{n+1}K \bigg (\frac{\mu(n+1)}{\mu(r)} \bigg )^a \eta \frac{\mu'(r)}{\mu(r)} \check M\lVert x-y\rVert\, dr \\
&\phantom{\le}+\int_n^{n+1}K \bigg (\frac{\mu(n+1)}{\mu(r)} \bigg )^a L\frac{\mu'(r) }{\mu(r)} \hat M  \lVert x-y\rVert \hat M  \, dr \\
&\le \frac{K\check M \theta^{a}\eta}{\mu(n)} \|x-y\| \int_{n}^{n+1}\mu'(r)dr +\frac{K L \hat M^2 \theta^{a} }{\mu(n)} \|x-y\| \int_{n}^{n+1}\mu'(r)dr\\
&= \left(K\check M \theta^{a}\eta + K L \hat M^2 \theta^{a}\right) \frac{\mu(n+1)-\mu(n)}{\mu(n)} \|x-y\|\\
\end{split}
\]
for $n\in \N$ and $x, y\in \R^d$. Then, we conclude that \eqref{4-4-3} holds, and, therefore, all the hypotheses of Theorem \ref{theo: main linearization discrete} are satisfied for $\mathbb{A}=(A_n)_{n\in \N}$ and $(g_n)_{n\in \N}$ defined above. 

Thus, provided that $\eta$ is sufficiently small, by Theorem \ref{theo: main linearization discrete} there exists  a sequence $(\psi_n)_{n\in \N}$ of $C^1$-diffeomorphisms $\psi_n \colon \R^d \to \R^d$ satisfying~\eqref{lincond} and~\eqref{hn-1} (for some $T, \rho >0$).
We now set 
\begin{equation}\label{eq: def of H and G}
    H(t,x)=T(t,n) \psi_n (\phi(n, t;x)) \; \text{ and }\;  G(t,x)=\phi(t, n; \psi_n^{-1}(T(n,t)x)),
\end{equation}
for $n\in \N$, $t\in [n, n+1)$ and $x\in \R^d$. In order to verify that $H(t,x)$ and $G(t,x)$ satisfy properties i) and ii) of our statement, we see from \eqref{eq: def gn cont case} that 
\[
\varphi(n+1,n;x)=A_nx+g_n(x)
\]
for any $n\in \N$ and $x\in \R^d$.
Combining this fact with \eqref{lincond} we get that
\[ 
\begin{split}
H(t,\varphi(t,1;x))&=T(t,n) \psi_n (\phi(n, t;\varphi(t,1;x)))\\
&=T(t,n) \psi_n (\phi(n, 1;x)))\\
&=T(t,n) \psi_n (\phi(n,n-1; \varphi(n-1,1;x)))\\
&=T(t,n) \psi_n ((A_{n-1}+g_{n-1})(\varphi(n-1,1;x)))\\
&=T(t,n) (A_{n-1} \circ \psi_{n-1} (\varphi(n-1,1;x)))\\
&=T(t,n-1) \psi_{n-1} (\varphi(n-1,1;x)),\\
\end{split}
\]
for every $t\in[n,n+1)$ and  $n>1$. The formula above also gives us that 
\[
T(t,n) \psi_n (\phi(n, 1;x)))=
T(t,n-1) \psi_{n-1} (\varphi(n-1,1;x)).
\]
Then, proceeding recursively, we can conclude that 
\[
H(t,\varphi(t,1;x))=T(t,1) \psi_{1} (x) 
\]
for $t\geq 1$. In particular, $H(t,\varphi(t,1;x))$ is a solution of \eqref{LDE} which proves i). Similarly, we can prove that $G$ satisfies property ii) of our statement. 

Now, we observe that
\[
\begin{split}
H(t, G(t, x)) &=T(t, n)\psi_n(\phi(n, t; G(t,x))) \\
&=T(t,n)\psi_n (\phi(n, t; \phi(t, n;\psi_n^{-1}(T(n, t)x)) ) \\
&=T(t,n)\psi_n (\psi_n^{-1}(T(n,t)x)) \\
&=T(t,n)T(n, t)x =x,
\end{split}
\]
for each $x\in \R^d$, $t\in [n, n+1)$ and $n\in \N$, we get $H(t, G(t, x))=x$ for every $t\geq 1$ and $x\in \R^d$. Similarly, we can show that $G(t, H(t, x))=x$ for every $t\geq 1$ and $x\in \R^d$. Consequently, conclusion iii) of our statement also holds true. It remains to show that iv) is satisfied. 

Proceeding as we did to obtain~\eqref{910x}, we find that there exists $\hat M>0$ such that
\begin{equation}\label{Dx-Inv}
\|D_x\phi(n,t;x) \| \le  \hat M
\end{equation}
for $n\in \N$, $t\in [n, n+1)$ and $x\in \R^d$. In particular, recalling that $\varphi(n,t;0)=0$ (which is a consequence of \eqref{1950}) and combining \eqref{Dx-Inv} with the mean-value theorem, we get
\begin{equation}\label{bx1}
\|\phi(n,t;x) \| \le \hat M  \|x\|.
\end{equation}
Therefore, if $\|x\|\leq \frac{ \rho}{\hat M}$ we get that $\|\phi(n,t;x) \| \leq \rho$ for every $t\in[n,n+1)$ and $n\in \N$. Now, since for every
$t\ge 1$ there is an $n\in\mathbb{N}$ such that $t\in [n,n+1)$, combining this observation with \eqref{hn-1}, \eqref{bnd}, \eqref{eq: grbound-cont}, \eqref{Dx-Inv} and the definition of $H$, we get that 
\[
\begin{split}
\|D_xH(t,x)\|&\leq\|T(t,n)\| \|D_x\psi_n (\phi(n, t;x))\|\|D_x\phi(n, t;x)\|\\
&\leq K \theta^a T  \hat M,
\end{split}
\]
whenever $\|x\|\leq \frac{\rho}{\hat M }$. Similarly,
\[
\begin{split}
\|D_xG(t,x)\|&\leq K \theta^a T  \hat M,\\
\end{split}
\]
for every $t\geq 1$ whenever $\|x\|\leq \frac{ \rho}{\hat M }$. Therefore, taking $R=K \theta^a T  \hat M$ and $\zeta =\frac{\rho}{\hat M}$, we obtain iv). This concludes the proof of Theorem \ref{theo: linarization continuous time pol}.
\end{proof}

\begin{example} The following is a continuous time version of the Example \ref{example: main theo discrete}. Let $\mu \colon [1,+\infty)\to (0,+\infty)$ be given by $\mu(t)=1+t$ and define $A,P\colon [1,+\infty)\to \R^{2\times 2}$ by
\[
A(t)=\begin{pmatrix}
-\frac{1}{t} & 0\\
0 & \frac{1}{t}
\end{pmatrix} \text{ and } P(t)=\begin{pmatrix}
1 & 0\\
0 & 0
\end{pmatrix}.
\]
Consider the associated dynamical system given by \eqref{LDE}. Then, the evolution family associated to this system is
\[
T(t,s)=\begin{pmatrix}
\frac{s}{t} & 0\\
0 & \frac{t}{s}
\end{pmatrix} \text{ for every } t,s\geq 1.
\]
We can easily see that \eqref{LDE} admits a strong $\mu$-dichotomy with constants $K=a=\lambda=1$ and projections $P(t)$, $t\geq 1$. Moreover, the discretization of this system defined by \eqref{AN} is given precisely by $\mathbb{A}=(A_n)_{n\in \N}$ and $(P_n)_{n\in \N}$ from the Example \ref{example: main theo discrete}. Therefore, by Corollary \ref{cor: equal spectrum cont}, 
\[
\Sigma_{\mu D,A(\cdot)}=\Sigma_{\mu D,\mathbb{A}}=\{-1,1\}
\] 
and $r=2$, $a_1=b_1=-1$ and $a_2=b_2=1$. Consequently, conditions \eqref{sp1} and \eqref{sp2} are satisfied. 

Considering $f:[1,+\infty)\times \R^2\to \R^2$ given by
\[f(t,(x_1,x_2))=\frac{\eta}{t+1}(\xi(x_1),\xi(x_2)),\]
where $\eta>0$ is a constant and $\xi$ is as in Example \ref{example: main theo discrete}, it follows that conditions \eqref{1950}, \eqref{1951}, and \eqref{1952} are satisfied. In particular, Theorem \ref{theo: linarization continuous time pol} may be applied whenever $\eta>0$ is small enough. Moreover, as in the discrete time case, we can easily verify that \eqref{LDE} does not admit a (nonuniform) strong exponential dichotomy and, therefore, previously available results are not applicable to this example.
\end{example}

\subsection{Differentiable and H\"older linearization under $\mu$-dichotomy} The following is a version of Theorem \ref{theo: diff+holder linearization} in the continuous time case. 

\begin{theorem}\label{theo: diff+holder linearization continuous}
Let \eqref{LDE} and $\mu$ be as in Theorem {\rm\ref{theo: linarization continuous time pol}} and assume that $\Sigma_{\mu D, A(\cdot)}$ is given by~\eqref{EDB-mu}, where $a_i$ and $b_i$ are such that \eqref{sp1} and~\eqref{sp3} are satisfied. Assume that $f\colon [1, \infty)\times \R^d \to \R^d$ is a $C^1$ map that satisfies the conditions \eqref{1950}-\eqref{1952}. Then, provided that $\eta$ is sufficiently small, there exist continuous maps $H, G\colon [1, \infty) \times \R^d \to \R^d$ satisfying properties i), ii) and iii) of Theorem {\rm \ref{theo: linarization continuous time pol}}.
Moreover, there exist constants $\tilde R,\tilde \zeta>0$ such that 
\begin{equation}\label{eq: H is diff 0 cont}
H(t,x)=x+o(\|x\|^{1+\varrho})\; \text{ and }\;
G(t,x)=x+ o(\|x\|^{1+ \varrho})
\end{equation}
as $\|x\|\to 0$ and
\begin{align}\label{eq: H and G are holder}
\begin{split}
&\|H(t,x)-H(t,y)\| \leq \tilde R \|x-y\|^{\alpha_1}\; \text{ and }\; \|G(t,x)-G(t,y)\| \leq  \tilde R  \|x-y\|^{\alpha_1}
\end{split}
\end{align}
for $x,y\in\mathbb{R}^d$ satisfying $\|x\|,\|y\|\le \tilde \zeta $, where $\varrho$ and $\alpha_1$ are given in Theorem \ref{theo: diff+holder linearization}.
\end{theorem}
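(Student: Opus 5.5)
We follow the proof of Theorem~\ref{theo: linarization continuous time pol} and replace Theorem~\ref{theo: main linearization discrete} by Theorem~\ref{theo: diff+holder linearization}. Let $(A_n)_{n\in\N}$ be given by~\eqref{AN} and $(g_n)_{n\in\N}$ by~\eqref{eq: def gn cont case}. The verification already carried out in the proof of Theorem~\ref{theo: linarization continuous time pol} shows two things. First, $(A_n)$ admits a strong $\mu$-dichotomy with $\Sigma_{\mu D,\mathbb A}=\Sigma_{\mu D,A(\cdot)}$, by Proposition~\ref{prop: equiv pol dich contXdisc} and Corollary~\ref{cor: equal spectrum cont}, so \eqref{sp1} and \eqref{sp3} hold. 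Second, $(g_n)$ satisfies \eqref{eq: gn 0 and Dgn 0}, \eqref{eq: bound deriv g} with $c=K\hat M\theta^a\eta$, and \eqref{4-4-3}. That verification did not use the spectral gap. Hence Theorem~\ref{theo: diff+holder linearization} applies for small $\eta$. It yields homeomorphisms $\psi_n$ satisfying \eqref{lincond}, \eqref{000} and \eqref{001}, with constants $L',\rho'$ independent of $n$.

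We then define $H$ and $G$ by \eqref{eq: def of H and G}. Properties i), ii) and iii) follow verbatim from the argument in the proof of Theorem~\ref{theo: linarization continuous time pol}, since that argument used only \eqref{lincond} and the cocycle property of $\phi$. Continuity of $H$ and $G$ in $(t,x)$ needs a short check at integer times $t=n+1$. Using $\phi(n+1,n;\cdot)=A_n+g_n$ and \eqref{lincond}, the formula on $[n,n+1)$ extends continuously to $t=n+1$ and agrees there with $\psi_{n+1}$. The same holds for $G$.

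For the Hölder estimate \eqref{eq: H and G are holder}, fix $t\in[n,n+1)$. By \eqref{bx1}, if $\|x\|,\|y\|\le \rho'/\hat M$, then $\phi(n,t;x)$ and $\phi(n,t;y)$ lie in the ball of radius $\rho'$. The mean value theorem with \eqref{Dx-Inv} gives $\|\phi(n,t;x)-\phi(n,t;y)\|\le \hat M\|x-y\|$. Combining this with \eqref{001} and $\|T(t,n)\|\le K\theta^a$, we get
\[
\|H(t,x)-H(t,y)\|\le K\theta^a L'\hat M^{\alpha_1}\|x-y\|^{\alpha_1}.
\]
For $G$ we argue symmetrically: $\|T(n,t)\|\le K\theta^a$ bounds the inner linear map, $\psi_n^{-1}$ is Hölder by \eqref{001}, and $\phi(t,n;\cdot)$ is Lipschitz with constant $\hat M$ by \eqref{910x}. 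This gives $\tilde\zeta=\min\{\rho'/\hat M,\rho'/(K\theta^a)\}$ and $\tilde R=K\theta^a\hat M^{\alpha_1}L'+\hat M L'(K\theta^a)^{\alpha_1}$.

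The main point is \eqref{eq: H is diff 0 cont}, which must be read with $t$ fixed. We write
\[
H(t,x)-x=T(t,n)\big[\psi_n(\phi(n,t;x))-\phi(n,t;x)\big]+\big[T(t,n)\phi(n,t;x)-x\big].
\]
In the first term, $\|\phi(n,t;x)\|\le \hat M\|x\|$ together with \eqref{000} gives $o(\|x\|^{1+\varrho})$. For the second term we use the variation of constants formula backwards in time: $T(t,n)\phi(n,t;x)-x=\int_t^n T(t,r)f(r,\phi(r,t;x))\,dr$. Since $D_xf(r,0)=0$, \eqref{1952} gives $\|f(r,z)\|\le L\frac{\mu'(r)}{\mu(r)}\|z\|^2$. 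Together with \eqref{bnd}, \eqref{bx1} and \eqref{eq: aux int mu' mu}, this bounds the second term by $C\|x\|^2=o(\|x\|^{1+\varrho})$, because $\varrho<1$. For $G$ the decomposition is analogous: $G(t,x)-x=[\phi(t,n;z)-T(t,n)z]+T(t,n)[\psi_n^{-1}(y)-y]$, where $y=T(n,t)x$ and $z=\psi_n^{-1}(y)$. The first bracket is $O(\|z\|^2)$ by the same quadratic estimate, and $\|z\|\le 2\|y\|\le 2K\theta^a\|x\|$ for small $x$ by \eqref{000}. The second bracket is $o(\|y\|^{1+\varrho})$ by \eqref{000}.

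The only delicate issue is uniformity in $t$. The little-$o$ in \eqref{000} is only asserted for each fixed $n$, so \eqref{eq: H is diff 0 cont} is obtained pointwise in $t$, which is what the statement requires. The Hölder bounds, by contrast, are uniform because $L'$ and $\rho'$ are.
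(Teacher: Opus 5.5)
Your proposal is correct and follows essentially the same route as the paper: apply Theorem~\ref{theo: diff+holder linearization} to the time-one discretization $(A_n,g_n)$, define $H$ and $G$ by \eqref{eq: def of H and G}, get the H\"older bounds from \eqref{001} together with the uniform Lipschitz bounds \eqref{910x}, \eqref{Dx-Inv} on the flow, and get \eqref{eq: H is diff 0 cont} from the same splitting $H(t,x)-x=T(t,n)[\psi_n(\phi(n,t;x))-\phi(n,t;x)]+[T(t,n)\phi(n,t;x)-x]$ with the quadratic bound $\|f(r,z)\|\le L\frac{\mu'(r)}{\mu(r)}\|z\|^2$. You are more explicit than the paper about continuity at integer times and about the $G$ case; one small point to add is that the quadratic estimate needs $\|\phi(r,t;x)\|\le \hat M\|x\|$ for all intermediate $r\in[n,t]$, not just $r=n$ as in \eqref{bx1}, and this follows from the same Gronwall argument that gives \eqref{Dx-Inv}.
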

\begin{proof}
We can proceed exactly as in the proof of Theorem \ref{theo: linarization continuous time pol}, but here we apply Theorem \ref{theo: diff+holder linearization} instead of Theorem \ref{theo: main linearization discrete}.

Using the expressions for $H$ and $G$ given in \eqref{eq: def of H and G} and combining \eqref{001}, \eqref{bnd}, \eqref{eq: grbound-cont}, \eqref{910x}, and \eqref{Dx-Inv}, we obtain \eqref{eq: H and G are holder}. We now show how to establish the first estimate in \eqref{eq: H is diff 0 cont}. The second is similar, and we shall refrain from writing it. Using \eqref{000}, \eqref{bnd}, \eqref{eq: grbound-cont} and \eqref{bx1} we get that for all $n\in \N$ and $t \in [n,n+1)$,
\begin{align*}
\lVert H(t,x)-x\rVert
&=\lVert T(t,n)\psi_n(\varphi(n,t;x))-x\rVert\\
&\le
\lVert T(t,n)\psi_n(\varphi(n,t;x)) -T(t,n)\varphi(n,t;x)\rVert\\
& 
\phantom{\le}+\lVert T(t,n)\varphi(n,t;x)-T(t,n)T(n,t)x\rVert\\
&\le
K\theta^a o(\lVert \varphi(n, t;x)\rVert^{1+ \varrho})
+K\theta^a  \lVert \phi(n,t;x)-T(n,t)x\rVert \\
&\le o(\lVert x\rVert^{1+ \varrho})
+K\theta^a \lVert \phi(n,t;x)-T(n,t)x\rVert
\end{align*}
as $\|x\|\to 0$. Moreover, it follows by \eqref{bnd}, \eqref{eq: grbound-cont}, \eqref{1950}, \eqref{1952}, \eqref{def-phi}, \eqref{eq: aux int mu' mu}, and \eqref{bx1} together with the mean-value theorem that for every $n\in \N$ and $t \in [n,n+1)$,
\[
\begin{split}
\lVert \varphi(n,t;x)-T(n,t)x\rVert
&\le \int_n^{n+1}\lVert T(n,s)f(s, \varphi(n,s;x))\rVert\, ds
\\
&\le \int_n^{n+1} K\theta^a   \sup_{\delta \in (0,1)}\|D_xf(s, \delta \varphi(n,s;x))\|\|\varphi(n,s;x)\|\,ds
\\
&\le \int_n^{n+1} K\theta^a L \frac{\mu'(s)}{\mu(s)}
\|\varphi(n,s; x)\|^2 ds
\\
&\le \int_n^{n+1}K\theta^a L \frac{\mu'(s)}{\mu(s)} \hat M ^2 \|x\|^2 ds
\le K\theta^{a+1}L \hat M ^2 \lVert x\rVert^2.
\end{split}
\]
Combining these observations, we get that
\[
\begin{split}
H(t,x)&=x+ o(\lVert x\rVert^{1+ \varrho})
\end{split}
\]
as $\|x\|\to 0$ (recall that $\varrho<1$). This concludes the proof of the theorem.
\end{proof}

\appendix

\section{} \label{sec: appendix a}

This section serves as an addendum to the proof of the main results in \cite{DZZ}. More precisely, we strengthen the properties of the conjugacies presented in the aforementioned work (property \eqref{conj}) and provide an argument that was missing in the original paper.

\begin{theorem}\label{Ap}
Let $\mathbb A=(A_n)_{n\in \Z}$ be a sequence of invertible operators on $\R^d$ that  admits a strong exponential dichotomy.
Suppose that 
\begin{align*}
\Sigma_{ED, \mathbb A}=\bigcup_{i=1}^r [a_i, b_i],
\end{align*}
where $a_i$ and $b_i$ satisfy \eqref{sp1} and \eqref{sp2}. Moreover, let $f_n\colon \R^d \to \R^d$ be a sequence of $C^1$ maps such that \eqref{fzero} and \eqref{fm} hold for $n\in\mathbb{Z}$.
Then, provided that $\eta$ is sufficiently small,  there exists a sequence $h_n\colon \R^d\to \R^d$ of $C^1$-diffeomorphisms such that
\begin{itemize}
\item for $n\in \Z$,
\begin{align}\label{con-hn}
h_{n+1} \circ (A_n+f_n)=A_n\circ h_n;
\end{align}
\item there exist $M, \rho>0$ such that
\begin{equation}\label{conj}
\|D h_n(v)z\| \le M\|z\| \quad \text{and} \quad \|Dh_n^{-1}(v)z\| \le M\|z\|, \quad \forall n\in \Z,
\end{equation}
for all  $z\in \R^d$ and $v\in\mathbb{R}^d$ such that $\|v\|\le \rho$.
\end{itemize}
\end{theorem}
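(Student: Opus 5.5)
The plan is to reduce Theorem~\ref{Ap} to the main result of \cite{DZZ} (Theorem 2 there). That result is stated for nonuniform exponential dichotomies, and a strong exponential dichotomy is the special case in which the nonuniformity exponent is zero. First I would check that our hypotheses match theirs. The spectrum $\Sigma_{ED,\mathbb A}$ given by \eqref{EDB}, together with \eqref{sp1} and \eqref{sp2}, gives the spectral gap and spectral band conditions required in \cite{DZZ}. The assumptions \eqref{fzero} and \eqref{fm} are exactly the smallness and Lipschitz conditions on $Df_n$ imposed there, with the nonuniform weights trivial.

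Next I would recall the structure of the construction in \cite{DZZ}. One chooses $c_i\in(b_{i-1},a_i)$ and forms the associated invariant splitting $\R^d=\bigoplus_i E_n^i$ with projections $P^i_n$, each satisfying uniform bounds $\|P^i_n\|\le K$. The conjugacy is then built in two steps. First, the stable and unstable parts are decoupled via invariant foliations, and this uses the spectral gap $a_{k+1}-b_k>\max\{b_r,-a_1\}$. Second, each part is linearized separately by a contraction argument in a space of sequences of maps $(\phi_n)$ with $\phi_n(0)=0$ and $D\phi_n(0)=0$, whose derivatives are bounded and globally Lipschitz/H\"older uniformly in $n$. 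The band conditions $b_i-a_i\le -b_k$ and $b_i-a_i\le a_{k+1}$ are exactly what makes the resulting series converge.

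The conjugacy equation \eqref{con-hn} follows directly from the fixed point property. The new ingredient is the uniform bound \eqref{conj} on $Dh_n$ and $Dh_n^{-1}$ near the origin. I would obtain it by tracking the constants in the fixed point. Since every estimate in the uniform case has constants independent of $n$, the fixed point $h_n=\Id+\phi_n$ satisfies $\|D\phi_n(v)\|\le C\|v\|^{\beta}$ for some H\"older exponent $\beta>0$ and a constant $C$ not depending on $n$. Here I use $D\phi_n(0)=0$ and the uniform regularity built into the function space. Choosing $\rho$ so that $C\rho^\beta\le 1/2$ gives $\|Dh_n(v)\|\le 3/2$ for $\|v\|\le\rho$, and by a Neumann series $\|Dh_n(v)^{-1}\|\le 2$. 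For $h_n^{-1}$ the same argument applies with the roles of the linear and nonlinear systems exchanged: $h_n^{-1}=\Id+\tilde\phi_n$ has the analogous bound. Alternatively, one uses $Dh_n^{-1}(h_n(v))=Dh_n(v)^{-1}$ together with $\|h_n(v)\|\ge\|v\|/2$, shrinking $\rho$ if needed.

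The main obstacle I expect is the missing argument in \cite{DZZ}. I believe it concerns showing that the constructed maps are genuinely global $C^1$ diffeomorphisms, that is, that the fixed point lies in a space guaranteeing invertibility of $h_n$ and that the composition of the foliation-straightening map with the sectorial linearizations preserves $C^1$ regularity with uniform constants. I would handle this by establishing the uniform $n$-independent H\"older modulus of $D\phi_n$ directly inside the contraction, that is, by restricting to the closed subset of sequences with $\sup_n\sup_{v\ne w}\|D\phi_n(v)-D\phi_n(w)\|/\|v-w\|^\beta\le C$. I would then verify that this set is invariant under the fixed point operator when $\eta$ is small; the band conditions should again be what makes this invariance hold.
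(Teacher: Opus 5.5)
There is a genuine gap; in fact there are two.

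First, the step that carries your argument is asserted, not proved. That step is the claim that $h_n=\Id+\phi_n$ satisfies $\|D\phi_n(v)\|\le C\|v\|^\beta$ with $C$ independent of $n$. You propose to get it by restricting a fixed point operator to a set with uniform H\"older modulus, whose invariance the band conditions are supposed to guarantee. This also rests on a misreading of \cite{DZZ}: there is no contraction on a space of sequences of maps there.

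Instead, the problem is lifted to the Banach space $Y_\infty$ of bounded sequences, with $(\mathbb A^\ast\mathbf x)_n=A_{n-1}x_{n-1}$ and $(\mathbb F\mathbf x)_n=A_{n-1}x_{n-1}+f_{n-1}(x_{n-1})$. The spectrum of $\mathbb A^\ast$ is identified with $\Sigma_{ED,\mathbb A}$ via \cite[Lemma 2]{DZZ}. An autonomous Banach-space $C^1$ linearization theorem then yields a single $C^1$ diffeomorphism $\Phi$ of $Y_\infty$ with $\Phi\circ\mathbb F=\mathbb A^\ast\circ\Phi$. One sets $h_m(v)=\Phi(\mathbf v^m)_m$, where $\mathbf v^m$ has $v$ in slot $m$ and $0$ elsewhere.

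This lift is the idea you are missing, because it makes uniformity in $n$ automatic. Continuity of $D\Phi$ and $D\Phi^{-1}$ at $\mathbf 0$ gives $\|D\Phi(\mathbf x)\|,\|D\Phi^{-1}(\mathbf x)\|\le M$ for $\|\mathbf x\|_\infty\le\rho$. Since $v\mapsto\mathbf v^m$ is an isometric embedding, $\|Dh_m(v)z\|\le\|D\Phi(\mathbf v^m)\|\,\|\mathbf z^m\|_\infty\le M\|z\|$ for every $m$ at once. Only boundedness on a ball is needed; no H\"older modulus and no tracking of constants.

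Second, you misidentify the missing argument, and your remedy would not supply it. What is missing in \cite{DZZ} is that each $h_m\colon\R^d\to\R^d$ is a bijection. This does not follow from bijectivity of $\Phi$ unless $\Phi$ acts coordinatewise, i.e.\ $\Phi(\mathbf x)_n$ depends only on $x_n$. Otherwise $\Phi(\mathbf v^m)$ can have nonzero entries off slot $m$, and distinct $v,w$ can produce the same $m$-th entry.

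The paper proves this diagonal structure ingredient by ingredient:
\begin{itemize}
\item for the foliation maps $p,q$, by a coordinate-swapping argument based on characterizing leaves through exponential convergence of $\mathbb F^{\pm n}$;
\item for the splitting maps $\eta,\sigma$, as limits of iterates of a coordinatewise contraction;
\item for each step $\Phi_i$ of the block-by-block local linearization of the contracting and expanding parts;
\item for the local-to-global extension $\Phi_\ast=(\mathbb A^\ast)^{-i}\circ\Phi\circ\mathbb F^i$.
\end{itemize}
Injectivity and surjectivity of $h_m$ then follow at once.

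A uniform H\"older bound on $D\phi_n$ near $0$ is purely local and says nothing about global injectivity or surjectivity of $h_n$. Likewise, both of your treatments of $h_n^{-1}$ presuppose the invertibility that has to be established. This applies whether you write it as $\Id+\tilde\phi_n$ or use $Dh_n^{-1}(h_n(v))=Dh_n(v)^{-1}$.
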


\begin{remark}
This result is essentially established in~\cite[Theorem 2]{DZZ}, except that the conclusion~\eqref{conj} was not explicitly written.  Notice that such $C^1$ conjugacy $h_n$ without satisfying \eqref{conj} always exists by a recursive construction.
\end{remark}

\begin{remark}\label{remark: diff def spect}
We emphasize the difference between the spectral conditions in Theorem \ref{Ap} and \cite[Theorem 2]{DZZ}, which arises from the differing definitions of spectra employed in these works. Specifically, a value $a$ belongs to the strong exponential dichotomy spectrum as defined in \cite{DZZ} if and only if $\ln a$ belongs to the strong exponential dichotomy spectrum as defined in the present paper.
\end{remark}


\begin{proof}[Proof of Theorem \ref{Ap}] Set
\[
Y_\infty:=\bigg \{ \mathbf x=(x_n)_{n\in \Z} \subset \R^d: \|\mathbf x\|_\infty:=\sup_{n\in \Z} \|x_n\| <+\infty \bigg \}.
\]
Then, $(Y_\infty, \| \cdot \|_\infty)$ is a Banach space. Define a bounded linear operator $ \mathbb A ^\ast \colon Y_\infty \to Y_\infty$  by 
\begin{equation}\label{operator}
(\mathbb A ^\ast \mathbf x)_n:=A_{n-1}x_{n-1}, \quad \forall n\in \Z, \quad \forall \mathbf x=(x_n)_{n\in \Z}\in Y_\infty.
\end{equation}
It follows from~\eqref{bg} with $\mu_n=e^n$ for $n\in \Z$ that $\mathbb A^\ast$ is indeed well-defined, bounded and invertible, where the inverse is given by 
\[
((\mathbb A^\ast) ^{-1}\mathbf x)_n=A_n^{-1}x_{n+1}, \quad \mathbf x=(x_n)_{n\in \mathbb Z}\in Y_\infty.
\]
Define the spectrum of $\mathbb A ^\ast $ by
\[
\sigma(\mathbb A ^\ast):=\{\varrho\in \mathbb C: \varrho\Id-\mathbb A ^\ast ~\mbox{is not invertible on $Y_\infty$}\}.
\]
By \cite[Lemma 2]{DZZ}, we see that $\varrho\in \sigma(\mathbb A^\ast)$ (actually considering the complexification of $\mathbb A^\ast$ and $Y_\infty$)
if and only if $\left(\frac{1}{|\varrho|}A_n\right)_{n\in \Z}$ does not admit a strong exponential dichotomy. This fact shows (recall Definition~\ref{def: mu spect} and Remark~\ref{aaa}) that
\[
\Sigma_{ED, \mathbb A}=\{\ln |\lambda |: \  \lambda \in \sigma(\mathbb A^\ast)\}.
\]

Let us consider
\[
|\sigma(\mathbb A^\ast)|=\{ |\lambda|: \ \lambda \in \sigma(\mathbb A^\ast)\}.
\]
Then, by the discussion above we have that
\[
|\sigma(\mathbb A^\ast)|=\bigcup_{i=1}^r [e^{a_i}, e^{b_i}].
\]
Next, using \eqref{fzero} and \eqref{fm} together with the mean-value theorem we get that 
\[
\|f_n(x)\|=\|f_n(x)-f_n(0)\|\le \eta \|x\|,
\]
for every $x\in \mathbb R^d$ and $n\in \mathbb Z$. This implies that the map $\mathbb F\colon Y_\infty \to Y_\infty$ given by
\begin{equation}\label{eq: def F lifting}
(\mathbb F(\mathbf x))_n:=F_{n-1}(x_{n-1}):= A_{n-1}x_{n-1}+f_{n-1}(x_{n-1})
\end{equation}
for $n\in \Z$ and $\mathbf x=(x_n)_{n\in \Z}\in Y_\infty$ is well-defined. According to the proof of \cite[Theorem 2]{DZZ}, we get the following conclusions:
\begin{enumerate}[label={(C\arabic*)},ref=C\arabic*]
\item \label{Df 0 is A} $\mathbb F\in C^{1,1}$ and $D \mathbb F(\mathbf 0)=\mathbb A^\ast$;
\item \label{Df x close A} there exists $C>0$ such that $\| D\mathbb F(\mathbf x)-\mathbb A^\ast\| \le C\eta$ for $\mathbf x\in Y_\infty$.
\end{enumerate}
Hence, provided that $\eta$ is sufficiently small, it follows from the $C^1$ linearization result given in~\cite[Appendix]{DZZ} that there exists a $C^1$-diffeomorphism  $\Phi \colon Y_\infty \to Y_\infty$ such that
\begin{equation}\label{P}
\Phi \circ \mathbb F=\mathbb A^\ast\circ \Phi.
\end{equation}
For $v\in \R^d$ and $m\in \Z$, we set
\begin{equation}\label{eq: def hn}
h_m(v):=(\Phi(\mathbf v^m))_m,\qquad \forall v\in \mathbb{R}^d,~\forall m\in\mathbb{Z},
\end{equation}
where $\mathbf v^m:=(v_n^m)_{n\in \Z}$ is given by $v_m^m=v$ and $v_n^m=0$ for $n\neq m$. Then, it is verified in Proposition \ref{prop: hn is homeo} below that $h_m\colon \R^d \to \R^d$ is a bijection while in the proof of~\cite[Theorem 2]{DZZ} it is shown that $h_m$ and $h_m ^{-1}$ are $C^1$ maps satisfying \eqref{con-hn}.
Moreover, we know that
\begin{equation}\label{DhDPhi}
  D h_m(v) z=(D\Phi(\mathbf v^m)\mathbf z^m)_m \quad \text{and} \quad Dh_m^{-1}(v)z=(D\Phi^{-1}(\mathbf v^m)\mathbf z^m)_m,
\end{equation}
for every $v, z\in \R^d$ and $m\in \Z$, 
where $\mathbf z^m$ is defined as $\mathbf v^m$, replacing $v$ by $z$.

In what follows, we prove \eqref{conj}.
It follows easily from~\eqref{P} that $\Phi(\mathbf 0)=\mathbf 0$.
Since $\Phi$ is $C^1$, there exists constants $M, \rho>0$ such that
\begin{equation}\label{P1}
\|D\Phi(\mathbf x)\| \le M \quad  \text{and} \quad  \|D\Phi^{-1}(\mathbf x)\|\le M, 
\end{equation}
for $\mathbf x\in Y_\infty$ satisfying $\|\mathbf x\|_\infty \le \rho$.
Then, we obtain from \eqref{DhDPhi} and~\eqref{P1} that
\[
\|D h_m(v)z\|\le \|D\Phi(\mathbf v^m)\mathbf z^m\|_\infty
\le \|D\Phi(\mathbf v^m)\|\,\|\mathbf z^m\|_\infty
=M\|z\|,
\]
for $\|v\| \le \rho$ (which implies that $\|\mathbf v^m\|_\infty\le \rho$). Similarly,
\[
\|D h_m^{-1}(v)z\|\le M\|z\|  \text{ for $\|v\|\le \rho$.}
\]
This proves (\ref{conj}), and the proof of the theorem is completed.
\end{proof}

The rest of this appendix is dedicated to showing the following result. We observe that this argument was missing from the original proof of \cite[Theorem 2]{DZZ}.
\begin{proposition}\label{prop: hn is homeo}
Under the assumptions of Theorem \ref{Ap}, for each $m\in \Z$, the map $h_m\colon \R^d \to \R^d$ given by \eqref{eq: def hn} is a bijection.
\end{proposition}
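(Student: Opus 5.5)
The plan is to show that $\Phi$ acts \emph{coordinatewise}. By this we mean that there are maps $\phi_n\colon\R^d\to\R^d$ such that
\[
(\Phi(\mathbf x))_n=\phi_n(x_n), \quad n\in\Z,\ \mathbf x\in Y_\infty .
\]
Once this is established, bijectivity of $h_m$ follows at once. Since $\Phi$ is a bijection of $Y_\infty$, each $\phi_n$ must be a bijection of $\R^d$. Indeed, surjectivity of $\phi_m$ follows by applying $\Phi^{-1}$ to $\mathbf w^m$. For injectivity, suppose $\phi_m(v)=\phi_m(v')$. Then $\Phi(\mathbf v^m)$ and $\Phi((\mathbf v')^m)$ agree at the index $m$. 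They also agree at every other index, since both equal $\phi_n(0)=0$ there; here $\phi_n(0)=0$ because $\Phi(\mathbf 0)=\mathbf 0$. Hence $\mathbf v^m=(\mathbf v')^m$ and $v=v'$. Because $h_m=\phi_m$ by \eqref{eq: def hn}, this gives the claim, with inverse $h_m^{-1}(w)=(\Phi^{-1}(\mathbf w^m))_m$.

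To prove that $\Phi$ is coordinatewise, I would exploit the structure of the two maps being conjugated. Both $\mathbb A^\ast$ and $\mathbb F$ are ``coordinatewise up to a shift'': by \eqref{operator} and \eqref{eq: def F lifting}, $(\mathbb F(\mathbf x))_n$ depends only on $x_{n-1}$, and the same holds for $\mathbb A^\ast$. I would introduce the class $\mathcal D$ of maps $\Psi\colon Y_\infty\to Y_\infty$ with $(\Psi(\mathbf x))_n=\psi_n(x_n)$, and the analogous class of shifted maps. These classes have the following properties.
\begin{itemize}
\item Composing a map in $\mathcal D$ with $\mathbb F$, $\mathbb F^{-1}$, $(\mathbb A^\ast)^{\pm1}$, or with projections built from the dichotomy projections $P_n$ (which also act coordinatewise), yields a map of the appropriate shifted class.
\item Operators of the form $\Psi\mapsto (\mathbb A^\ast)^{-1}\circ\Psi\circ\mathbb F$, restricted to a spectral subspace, map $\mathcal D$ into itself.
\item $\mathcal D$ is closed in every norm used in the construction, whether sup-norm or weighted $C^1$-type norms.
\end{itemize}

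Next I would revisit the construction of $\Phi$ in \cite[Appendix]{DZZ}. There $\Phi$ is produced by a finite sequence of steps:
\begin{enumerate}
\item splitting along the spectral gap using the coordinatewise projections associated with the $S_i$-type decomposition of $\sigma(\mathbb A^\ast)$;
\item building invariant manifolds or foliations and the partial conjugacies on each piece as fixed points of contractions of the type $\Psi\mapsto (\mathbb A^\ast)^{-1}\circ\Psi\circ\mathbb F$, or their unstable counterparts, on spaces of maps;
\item composing the resulting partial conjugacies.
\end{enumerate}
The argument is then an induction over these steps. If every contraction preserves the closed subset of $\mathcal D$-type maps, the unique fixed point lies in that subset. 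Compositions of $\mathcal D$-maps remain in $\mathcal D$. Hence $\Phi\in\mathcal D$.

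The main obstacle is checking that \emph{every} ingredient of the abstract Banach-space $C^1$ linearization in \cite[Appendix]{DZZ} preserves coordinatewise structure. That construction is formulated for general $C^{1,1}$ maps on a Banach space and does not see the product structure of $Y_\infty$. In particular, one must confirm two things:
\begin{itemize}
\item any bump functions or cut-offs are applied componentwise, for instance $\|x_n\|$ rather than $\|\mathbf x\|_\infty$, or can be replaced by componentwise ones;
\item the spectral projections of $\mathbb A^\ast$ coincide with the coordinatewise dichotomy projections.
\end{itemize}
If a global cut-off in $\|\mathbf x\|_\infty$ is used, I would first try to replace it by the componentwise cut-off. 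This is harmless because \eqref{fm} already gives global bounds, so the estimates \eqref{Df 0 is A}--\eqref{Df x close A} persist.

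As a fallback, I would use the uniqueness of $\Phi$ among conjugacies of the form $\Id+$ (small $C^1$ perturbation). For each bounded sequence of orthogonal maps or coordinate-restriction operator $\pi_S$, the conjugates $\pi_S\circ\Phi\circ\pi_S$ and the like again solve \eqref{P}. Uniqueness would then force $\Phi$ to commute with every $\pi_S$, which yields the coordinatewise form.
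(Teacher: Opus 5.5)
Your plan is essentially the paper's. The paper also shows that $\Phi(\mathbf x)_n$ depends only on $x_n$ by going through each ingredient of the construction: the coordinatewise projections and block-diagonalization, the stable and unstable foliations, the splitting map $\eta$ obtained as a limit of iterates of a coordinatewise contraction, the limits $\lim_m(\mathbb A_k^\ast)^{-m}\Pi^b\mathbb F_k^m$, the componentwise cut-offs, and the local-to-global extension $(\mathbb A^\ast)^{-k}\circ\Phi\circ\mathbb F^k$. It then deduces that $h_m$ is a bijection exactly as you do. The one noteworthy difference is that the paper gets the coordinatewise structure of the foliations from the dynamical characterization of the leaves, by splicing two leaf points, rather than from the contraction that constructs them.

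Your uniqueness fallback would not work, for two reasons. The restrictions $\pi_S$ satisfy $\mathbb F\circ\pi_S=\pi_{S+1}\circ\mathbb F$, so they do not commute with $\mathbb F$ and conjugating by them gives no new solution of \eqref{P}. Also, such conjugacies are not unique; for example, $(1+\epsilon)\Phi$ is another one. Your main line does not need the fallback.
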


The main step in the proof of Proposition \ref{prop: hn is homeo} consists of showing that $\Phi$ as in \eqref{P} has the property that its $n$-th coordinate depends only on the $n$-th coordinate of the argument. In order to prove this, we need several auxiliary observations. We retain all the notation introduced in the proof of Theorem \ref{Ap}.

Observe initially that condition \eqref{pd1} with $\mu_n=e^n$ for $n\in \Z$ can be translated into
\begin{equation}\label{dic1}
\|\mathcal A(m, n)P_n\|\le Ke^{-\lambda (m-n)} \text{ for $m\ge n$}
\end{equation}
and
\begin{equation}\label{dic2}
\|\mathcal A(m, n)(\Id-P_n)\|\le K e^{-\lambda (n-m)} \text{ for $m\le n$.}
\end{equation}
This implies that $\mathbb A^\ast $ is hyperbolic. That is, if we denote by $\Pi^s\colon Y_\infty \to Y_\infty$ the projection given by 
\[
(\Pi^s \mathbf x)_n=P_n x_n, \quad \mathbf x=(x_n)_{n\in \mathbb Z}\in Y_\infty,
\]
then
\[
\|(\mathbb A^\ast)^n \Pi^s\|\le Ke^{-\lambda n} \; \text{ for } n\ge 0,
\]
and 
\[
\|(\mathbb A^\ast)^{n}\Pi^u\|\le Ke^{\lambda n} \; \text{ for } n\le 0,
\]
where $\Pi^u:=\Id-\Pi^s$. Note that these estimates follow readily from~\eqref{dic1} and~\eqref{dic2}.


\subsection{Foliations}\label{sec: foliations}
Due to the hyperbolicity of $\mathbb A ^{\ast}$ and~\eqref{Df x close A}, $\mathbb F$ has stable and unstable global foliations (provided that $\eta$ is sufficiently small). More precisely, let $Y_\infty^s=\Pi^s Y_\infty$ and $Y_\infty^u=\Pi^u Y_\infty$. Then the stable leaf at $\mathbf x\in Y_\infty$ is given by 
\[
\mathcal M_s(\mathbf x)=\left \{\mathbf y\in Y_\infty: \ \sup_{n\in \mathbb N}(e^{\varrho n}\|\mathbb F^n(\mathbf x)-\mathbb F^n(\mathbf y)\|_\infty)<+\infty \right  \},
\]
where $\varrho \in (0, \lambda)$ is sufficiently small (independently of $\mathbf x$). By~\cite[Appendix]{DZZ},  there exists $p\colon Y_\infty \times Y_\infty^s \to Y_\infty^u$ of class $C^1$ such that 
\[
\mathcal M_s(\mathbf x)=\{\mathbf v+p(\mathbf x, \mathbf v): \ \mathbf v\in Y_\infty^s\}.
\]
Furthermore, $p(\mathbf x, \Pi^s \mathbf x)=\Pi^u \mathbf x$.
We claim that the value $p(\mathbf x, \mathbf v)_n$ depends only on the $n$-th coordinates of $\mathbf x$ and $\mathbf v$.
\begin{lemma}\label{lem: p x k depends only vk}
Take $\mathbf x\in Y_\infty$ and 
let $\mathbf v^i\in Y_\infty^s$, $i=1, 2$ be such that $v_k^1=v_k^2$ for some $k\in \mathbb Z$. Then
\[
p(\mathbf x, \mathbf v^1)_k=p(\mathbf x, \mathbf v^2)_k.
\]
\end{lemma}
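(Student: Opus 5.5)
The plan is to use the standard Lyapunov–Perron characterization of the stable leaf and exploit the fact that $\mathbb F$ acts ``coordinatewise with a shift''. By \eqref{eq: def F lifting}, $(\mathbb F^j(\mathbf y))_{n+j}=\mathcal F(n+j,n)(y_n)$, where $\mathcal F(m,n)=F_{m-1}\circ\cdots\circ F_n$. So the orbit of the $n$-th coordinate under $\mathbb F$ only involves the $n$-th coordinate of the initial point. I will therefore characterize the leaf coordinatewise. For fixed $\mathbf x$ and $k$, I will show that $\mathbf y\in\mathcal M_s(\mathbf x)$ implies
\[
\sup_{j\ge0}e^{\varrho j}\|\mathcal F(k+j,k)(y_k)-\mathcal F(k+j,k)(x_k)\|<\infty .
\]
This holds because $\|\mathbb F^j(\mathbf x)-\mathbb F^j(\mathbf y)\|_\infty\ge \|(\mathbb F^j\mathbf x)_{k+j}-(\mathbb F^j\mathbf y)_{k+j}\|$.

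The key step is a finite-dimensional (fiberwise) stable-manifold statement at time $k$. Consider
\[
W_k(x_k):=\{y\in\R^d:\ \sup_{j\ge 0}e^{\varrho j}\|\mathcal F(k+j,k)(y)-\mathcal F(k+j,k)(x_k)\|<\infty\}.
\]
I claim that for each $v\in \Ima P_k$ there is exactly one $y\in W_k(x_k)$ with $P_k y=v$. Existence and uniqueness come from the usual Lyapunov–Perron fixed-point argument. Write $u_j=\mathcal F(k+j,k)(y)-\mathcal F(k+j,k)(x_k)$. This sequence solves
\[
u_{j+1}=A_{k+j}u_j+\big(f_{k+j}(\cdot+u_j)-f_{k+j}(\cdot)\big),
\]
and its $e^{-\varrho j}$-bounded solutions are exactly the fixed points of
\[
u_j=\mathcal A(k+j,k)(v-P_kx_k)+\sum_{i<j}\mathcal A(k+j,k+i+1)P\,(\dots)-\sum_{i\ge j}\mathcal A(k+j,k+i+1)Q\,(\dots).
\]
The dichotomy estimates \eqref{dic1}–\eqref{dic2}, the bound $\varrho<\lambda$, and the Lipschitz constant $\eta$ from \eqref{fm}, with $\eta$ small, make this map a contraction on the weighted space. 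This gives the unique $u$ with prescribed stable part, hence a unique $y$.

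With this in hand, the conclusion follows. Write $\mathbf y^i=\mathbf v^i+p(\mathbf x,\mathbf v^i)\in\mathcal M_s(\mathbf x)$. By the first step, $y^i_k\in W_k(x_k)$. Their stable components are $P_k y^i_k=v^i_k$, since $p$ takes values in $Y^u_\infty$ so $P_k p(\mathbf x,\mathbf v^i)_k=0$. These components agree, so uniqueness in $W_k(x_k)$ gives $y^1_k=y^2_k$. Hence $p(\mathbf x,\mathbf v^1)_k=Q_ky^1_k=Q_ky^2_k=p(\mathbf x,\mathbf v^2)_k$.

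The main obstacle is making the fiberwise contraction consistent with the global one. The rate $\varrho$ and the smallness of $\eta$ used in \cite[Appendix]{DZZ} for $\mathcal M_s$ must also work in the coordinatewise Lyapunov–Perron operator. I expect this to be fine because the fiber estimates are uniform in $k$, coming from the same $K,\lambda$. The subtle point is only that the leaf is defined via a $\sup$ over all coordinates, while I use just the single diagonal coordinate $n=k+j$. The inequality in the first step is the right direction, so only uniqueness at the fiber level is needed, not existence.
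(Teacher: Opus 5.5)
Your proof is correct, but it takes a genuinely different route from the paper's.

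\textbf{The paper's route.} The paper never analyses a single fibre. It splices $\mathbf y^2=\mathbf v^2+p(\mathbf x,\mathbf v^2)$ by replacing its $k$-th coordinate with that of $\mathbf y^1$. From the shift structure $(\mathbb F^j\mathbf y)_m=\mathcal F(m,m-j)(y_{m-j})$, the quantity $\|\mathbb F^j(\mathbf x)-\mathbb F^j(\mathbf y)\|_\infty$ for the spliced sequence is bounded by the maximum of the corresponding quantities for $\mathbf y^1$ and $\mathbf y^2$. Hence the spliced sequence lies in $\mathcal M_s(\mathbf x)$. Its stable projection is $\mathbf v^2$, because $v_k^1=v_k^2$. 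Uniqueness of the graph representation of $\mathcal M_s(\mathbf x)$ over $Y_\infty^s$ then forces it to equal $\mathbf y^2$. So the paper pushes the already-available global uniqueness down to the $k$-th coordinate. It needs no new estimates and no further smallness of $\eta$.

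\textbf{Your route.} You prove fibrewise uniqueness from scratch, by a Lyapunov--Perron contraction on sequences with $\sup_j e^{\varrho j}\|u_j\|<\infty$. This does go through:
\begin{itemize}
\item decaying solutions satisfy the Lyapunov--Perron equation, because the tail term $\mathcal A(k+j,k+N)Q_{k+N}u_N$ tends to $0$ as $N\to\infty$ by \eqref{dic2};
\item the nonlinearity $u\mapsto f_{k+j}(z_j+u)-f_{k+j}(z_j)$ is globally $\eta$-Lipschitz;
\item since $\varrho<\lambda$, the weighted sums are bounded uniformly in $k$ and $j$.
\end{itemize}
The cost is a separate contraction estimate, possibly with $\eta$ taken smaller, which is harmless under the standing ``$\eta$ sufficiently small'' hypothesis.

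\textbf{What each buys.} The paper's argument is shorter and reuses existing structure. Yours gives more: it identifies the $k$-th coordinate of the leaf intrinsically with the fibre leaf $W_k(x_k)$, which depends only on $x_k$. So the same argument proves Lemma~\ref{lem p x k depends only on xk} and the subsequent corollary at once, namely $p(\mathbf x,\mathbf v)_k=p_k(x_k,v_k)$.
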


\begin{proof}
We define $\mathbf y=(y_n)_{n\in \mathbb Z}\in Y_\infty$ by
\[
\begin{split}
y_n &=\begin{cases}
v_n^2+p(\mathbf x, \mathbf v^2)_n & n\neq k; \\
v_k^1+p(\mathbf x, \mathbf v^1)_k & n=k
\end{cases} \\
&=\begin{cases}
v_n^2+p(\mathbf x, \mathbf v^2)_n & n\neq k; \\
v_k^2+p(\mathbf x, \mathbf v^1)_k & n=k.
\end{cases}
\end{split}
\]
Observe that for $n\in \mathbb N$ and $m\in \mathbb Z$ we have 
\[
\begin{split}
&(\mathbb F^n(\mathbf x)-\mathbb F^n(\mathbf y))_m \\
&=
\begin{cases}
\mathcal F(m, m-n)(x_{m-n})-\mathcal F(m, m-n)(v_{m-n}^2+p(\mathbf x, \mathbf v^2)_{m-n}) & m\neq n+k; \\
\mathcal F(m, m-n)(x_{m-n})-\mathcal F(m, m-n)(v_{m-n}^1+p(\mathbf x, \mathbf v^1)_{m-n}) &m=n+k, 
\end{cases}
\end{split}
\]
where 
\[
\mathcal F(k, l):=\begin{cases}
F_{k-1} \circ \ldots \circ F_l & k>l; \\
\Id & k=l,
\end{cases}
\]
and $F_k$ is given in \eqref{eq: def F lifting}.
Consequently,
\[
\|\mathbb F^n(\mathbf x)-\mathbb F^n(\mathbf y)\|_\infty\le \max \{\|\mathbb F^n(\mathbf x)-\mathbb F^n(\mathbf v^2+p(\mathbf x, \mathbf v^2))\|_\infty,  \|\mathbb F^n(\mathbf x)-\mathbb F^n(\mathbf v^1+p(\mathbf x, \mathbf v^1))\|_\infty \}.
\]
Thus, since $\mathbf v^i+p(\mathbf x, \mathbf v^i)\in  \mathcal M_s(\mathbf x)$ for $i=1,2$, it easily follows that $\mathbf y\in \mathcal M_s(\mathbf x)$. Therefore, there is $\mathbf v\in Y_\infty^s$ such that
\[
\mathbf y=\mathbf v+p(\mathbf x, \mathbf v).
\]
By projecting onto $Y_\infty^s$, we see that $\mathbf v=\mathbf v^2$. Hence, 
\[
v_k^1+p(\mathbf x, \mathbf v^1)_k=y_k=v_k^2+p(\mathbf x, \mathbf v^2)_k=v_k^1+p(\mathbf x, \mathbf v^2)_k,
\]
yielding the desired claim.
\end{proof}

\begin{lemma}\label{lem p x k depends only on xk}
Let $\mathbf x^i\in Y_\infty$, $i=1, 2$ and $\mathbf v\in Y_\infty^s$ be such that $x_k^1=x_k^2$ for some $k\in \mathbb Z$. Then
\[
p(\mathbf x^1, \mathbf v)_k=p(\mathbf x^2, \mathbf v)_k.
\]
\end{lemma}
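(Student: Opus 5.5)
We will argue in the same spirit as the proof of Lemma~\ref{lem: p x k depends only vk}: we build a mixed point and use the uniqueness of the representation of a stable leaf as a graph over $Y_\infty^s$.

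Fix $\mathbf x^1,\mathbf x^2$ with $x_k^1=x_k^2$ and $\mathbf v\in Y_\infty^s$. Put $\mathbf w^i:=\mathbf v+p(\mathbf x^i,\mathbf v)\in\mathcal M_s(\mathbf x^i)$, $i=1,2$. Define $\mathbf y=(y_n)_{n\in\Z}$ by $y_n=w^2_n$ for $n\neq k$ and $y_k=w^1_k$. Clearly $\mathbf y\in Y_\infty$.

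The first step is to show that $\mathbf y\in\mathcal M_s(\mathbf x^2)$. Since $\mathbb F$ acts coordinatewise with a shift, $(\mathbb F^n(\mathbf z))_m=\mathcal F(m,m-n)(z_{m-n})$, we compute the differences coordinatewise.
\begin{itemize}
\item For $m\neq n+k$ we have $(\mathbb F^n(\mathbf x^2)-\mathbb F^n(\mathbf y))_m=(\mathbb F^n(\mathbf x^2)-\mathbb F^n(\mathbf w^2))_m$.
\item For $m=n+k$ the difference equals $\mathcal F(m,k)(x^2_k)-\mathcal F(m,k)(w^1_k)$. Because $x^2_k=x^1_k$, this is $(\mathbb F^n(\mathbf x^1)-\mathbb F^n(\mathbf w^1))_m$.
\end{itemize}
Hence
\[
\|\mathbb F^n(\mathbf x^2)-\mathbb F^n(\mathbf y)\|_\infty\le\max\{\|\mathbb F^n(\mathbf x^2)-\mathbb F^n(\mathbf w^2)\|_\infty,\|\mathbb F^n(\mathbf x^1)-\mathbb F^n(\mathbf w^1)\|_\infty\}.
\]
After multiplying by $e^{\varrho n}$, both terms on the right are bounded in $n\in\N$, because $\mathbf w^i\in\mathcal M_s(\mathbf x^i)$. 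The rate $\varrho$ is the same for every base point, which is what makes the maximum work. Therefore $\mathbf y\in\mathcal M_s(\mathbf x^2)$.

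The second step uses the graph representation. We get $\mathbf y=\mathbf u+p(\mathbf x^2,\mathbf u)$ for some $\mathbf u\in Y_\infty^s$, and applying $\Pi^s$ gives $\mathbf u=\Pi^s\mathbf y$. Since $\Pi^s$ acts coordinatewise, $(\Pi^s\mathbf y)_n=P_ny_n$. For $n\neq k$ this equals $P_nw^2_n=v_n$. For $n=k$ it equals $P_kw^1_k=v_k$. Here we use that $p$ takes values in $Y^u_\infty$, so $P_n p(\cdot,\cdot)_n=0$, and that $P_nv_n=v_n$. Hence $\mathbf u=\mathbf v$, and so $\mathbf y=\mathbf w^2$. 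Comparing $k$-th coordinates gives $v_k+p(\mathbf x^1,\mathbf v)_k=v_k+p(\mathbf x^2,\mathbf v)_k$, which is the claim.

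The only delicate point is the first step: we must justify that the stable leaf is characterized by the exponential rate $\varrho$ uniformly in the base point. We must also check that mixing coordinates of two different orbits is compatible with the coordinatewise structure of $\mathbb F$. Both follow from the definition of $\mathcal M_s$ and from \eqref{eq: def F lifting}, so we do not expect a real obstacle.
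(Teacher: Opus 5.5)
Your proposal is correct and follows essentially the same argument as the paper. You build the mixed point $\mathbf y$ from $\mathbf v+p(\mathbf x^2,\mathbf v)$ off the $k$-th coordinate and $\mathbf v+p(\mathbf x^1,\mathbf v)$ at it, show $\mathbf y\in\mathcal M_s(\mathbf x^2)$ via the coordinatewise max bound (using $x^1_k=x^2_k$), and conclude from $\Pi^s\mathbf y=\mathbf v$ and uniqueness of the graph representation.
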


\begin{proof}
We define $\mathbf y=(y_n)_{n\in \mathbb Z}\in Y_\infty$ by 
\[
y_n=\begin{cases}
v_n +p(\mathbf x^2, \mathbf v)_n &n\neq k; \\
v_k+p(\mathbf x^1, \mathbf v)_k & n=k.
\end{cases}
\]
Then, using the notation introduced in the proof of Lemma \ref{lem: p x k depends only vk}, for $n \in \mathbb N$ and $m\in \mathbb Z$, we have 
\[
\begin{split}
&(\mathbb F^n (\mathbf x^2)-\mathbb F^n(\mathbf y))_m \\
&=\begin{cases}
\mathcal F(m, m-n)(x_{m-n}^2)-\mathcal F(m, m-n)(v_{m-n}+p(\mathbf x^2, \mathbf v)_{m-n})& m\neq n+k; \\
\mathcal F(m, m-n)(x_{m-n}^1)-\mathcal F(m, m-n)(v_{m-n}+p(\mathbf x^1, \mathbf v)_{m-n}) & m=n+k.
\end{cases}
\end{split}
\]
Hence, 
\[
\|\mathbb F^n (\mathbf x^2)-\mathbb F^n(\mathbf y)\|_\infty \le \max \{\|\mathbb F^n(\mathbf x^2)-\mathbb F^n(\mathbf v+p(\mathbf x^2, \mathbf v))\|_\infty, \|\mathbb F^n(\mathbf x^1)-\mathbb F^n(\mathbf v+p(\mathbf x^1, \mathbf v))\|_\infty\},
\]
and thus $\mathbf y\in \mathcal M_s(\mathbf x^2)$. Hence, there exists $\tilde{\mathbf v}\in Y_\infty^s$ such that
\[
\mathbf y=\tilde{\mathbf v}+p(\mathbf x^2, \tilde{\mathbf v}).
\]
In particular, $\Pi^s\mathbf y=\tilde{\mathbf v}$. On the other hand, from the definition of $\mathbf y$ we have $\Pi^s\mathbf y=\mathbf v$. Thus, $\tilde{\mathbf v}=\mathbf v$. Therefore, the equality above, together with the definition of $\mathbf y$, implies that
\[
v_k+p(\mathbf x^1, \mathbf v)_k =y_k=v_k+p(\mathbf x^2, \mathbf v)_k,
\]
yielding the desired claim.
\end{proof}

\begin{corollary}
Let $\mathbf x^i\in Y_\infty$ and $\mathbf v^i\in Y_\infty^s$, $i=1, 2$ be such that $x_k^1=x_k^2$ and $v_k^1=v_k^2$ for some $k\in \mathbb Z$. Then
\[
p(\mathbf x^1, \mathbf v^1)_k=p(\mathbf x^2, \mathbf v^1)_k.
\]
\end{corollary}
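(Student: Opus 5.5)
The plan is to chain Lemma \ref{lem p x k depends only on xk} and Lemma \ref{lem: p x k depends only vk}, changing one argument of $p$ at a time. No new estimates are needed. Both lemmas are purely coordinatewise statements about the stable-leaf parametrization $p$, and they require only that the relevant $k$-th coordinates agree.

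First, fix the stable coordinate $\mathbf v^1\in Y_\infty^s$. Since $x_k^1=x_k^2$, Lemma \ref{lem p x k depends only on xk}, applied with $\mathbf v=\mathbf v^1$, gives directly
\[
p(\mathbf x^1,\mathbf v^1)_k=p(\mathbf x^2,\mathbf v^1)_k .
\]
This is exactly the identity displayed in the corollary. In this form the hypothesis $v_k^1=v_k^2$ is not even used.

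Second, to record the stronger and presumably intended conclusion $p(\mathbf x^1,\mathbf v^1)_k=p(\mathbf x^2,\mathbf v^2)_k$, which is what will be used when we show that $(\Phi(\mathbf x))_k$ depends only on $x_k$, I would continue with the base point $\mathbf x^2$ fixed. Since $\mathbf v^1,\mathbf v^2\in Y_\infty^s$ satisfy $v_k^1=v_k^2$, Lemma \ref{lem: p x k depends only vk}, applied with $\mathbf x=\mathbf x^2$, yields
\[
p(\mathbf x^2,\mathbf v^1)_k=p(\mathbf x^2,\mathbf v^2)_k .
\]
Concatenating the two equalities gives the claim.

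There is no genuine obstacle here. The only point to check is that the hypotheses of each lemma are met exactly. In the first step, a single stable vector is shared by both base points. In the second step, a single base point is shared by both stable vectors, and both vectors lie in $Y_\infty^s$. This is immediate from the assumptions.
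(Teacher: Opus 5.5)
Your proposal is correct and is essentially the paper's argument, which also just chains the two lemmas, in the opposite order: $p(\mathbf x^1,\mathbf v^1)_k=p(\mathbf x^1,\mathbf v^2)_k=p(\mathbf x^2,\mathbf v^2)_k$. You read the statement correctly. The paper's proof ends at $p(\mathbf x^2,\mathbf v^2)_k$, so the $\mathbf v^1$ on the right-hand side of the displayed statement is a typo and your stronger version is the intended one; the literal version does follow from Lemma~\ref{lem p x k depends only on xk} alone.
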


\begin{proof}
Using Lemmas \ref{lem: p x k depends only vk} and \ref{lem p x k depends only on xk}, we have 
\[
p(\mathbf x^1, \mathbf v^1)_k=p(\mathbf x^1, \mathbf v^2)_k=p(\mathbf x^2, \mathbf v^2)_k
\]
which proves the corollary.
\end{proof}
It follows from the above corollary that $p$ has the form
\[
p(\mathbf x, \mathbf v)=(p_n(x_n, v_n))_{n\in \mathbb Z},
\]
for some sequence of maps $p_n\colon \mathbb R^d \times \Ima P_n \to \Ker P_n$. The above also show that the stable manifold $\mathcal M_s(\mathbf 0)$ (where $\mathbf 0=(0)_{n\in \mathbb Z}$) is the graph of the function $g^s \colon Y_\infty^s \to Y_\infty^u$ given by $g^s(\mathbf v):=p(\mathbf 0, \mathbf v)$. Observe that $g^s(\mathbf v)_n$ depends only on $v_n$.

The same discussion applies to unstable foliation. Thus, for $\mathbf x\in Y_\infty$, the unstable leaf \[\mathcal M_u(\mathbf x)=\left \{\mathbf y\in Y_\infty: \  \sup_{n\in \mathbb N}(e^{\varrho n}\|\mathbb F^{-n}(\mathbf x)-\mathbb F^{-n} (\mathbf y)\|_\infty) <+\infty \right \}\] can be represented as 
\[
\mathcal M_u(\mathbf x)=\{\mathbf v+q(\mathbf x, \mathbf v): \ \mathbf v\in Y_\infty^u\},
\]
where $q\colon Y_\infty \times Y_\infty^u \to Y_\infty^s$ is of class $C^1$ such that $q(\mathbf x, \mathbf v)_n$ depends only on $x_n$ and $v_n$. The unstable manifold $\mathcal M_u(\mathbf 0)$ is the graph of the function $g^u:=q(\mathbf 0, \cdot) \colon Y_\infty^u \to Y_\infty^s$ such that $g^u(\mathbf v)_n$ depends only on $v_n$. 

\subsection{Splitting}\label{splitting}
Our next goal is to show that the homeomorphisms $\eta, \sigma \colon Y_\infty \to Y_\infty$ given~\cite[Lemma 3.1]{BT} also have the property that $\eta(\mathbf x)_n$ and $\sigma(\mathbf x)_n$ depend only on $x_n$.  We note that since $p$ and $q$ are of class $C^1$, $\eta$ and $\sigma$ are also of class $C^1$ (see~\cite[Lemma 3.1]{BT}).

To this end,  we note that the construction of $\eta$ in~\cite[Lemma 3.1]{BT} gives that 
$\eta(\mathbf x)$ is a fixed point for the contraction $G_\mathbf x\colon Y_\infty \to Y_\infty$ defined by 
\[
G_\mathbf x(\mathbf y):=p(\mathbf x, g^u(\Pi^u \mathbf y))+q(\mathbf x, g^s(\Pi^s\mathbf y)), \quad \mathbf y\in Y_\infty.
\]
Since $(\Pi^s \mathbf y)_n$ and $(\Pi^u \mathbf y)_n$ depend only on $y_n$, it follows from the discussion in the previous section that $G_\mathbf x(\mathbf y)_n$ depends only on $x_n$ and $y_n$.

\begin{lemma}\label{lem eta n depend on xn}
For $\mathbf x, \tilde{\mathbf x}\in Y_\infty$ such that $x_n=\tilde x_n$ for some $n\in \mathbb Z$, we have $\eta(\mathbf x)_n=\eta(\tilde{\mathbf x})_n$.
\end{lemma}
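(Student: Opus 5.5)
The idea is to exploit the fact that $\eta(\mathbf x)$ is the unique fixed point of the contraction $G_\mathbf x$, and that $G_\mathbf x$ acts coordinatewise. By the discussion preceding the lemma, there are maps $\Gamma_m\colon \R^d\times\R^d\to\R^d$ with $G_\mathbf x(\mathbf y)_m=\Gamma_m(x_m,y_m)$ for all $m\in\Z$ and $\mathbf x,\mathbf y\in Y_\infty$. This follows because $(\Pi^s\mathbf y)_m=P_my_m$ and $(\Pi^u\mathbf y)_m=(\Id-P_m)y_m$. Moreover, $g^u(\mathbf w)_m$ and $g^s(\mathbf w)_m$ depend only on $w_m$, while $p(\mathbf x,\mathbf v)_m$ and $q(\mathbf x,\mathbf v)_m$ depend only on $x_m,v_m$.

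Next I fix $\mathbf x,\tilde{\mathbf x}$ with $x_n=\tilde x_n$ and run the Picard iteration for both contractions from the same initial point, say $\mathbf y^0=\tilde{\mathbf y}^0=\mathbf 0$. I set $\mathbf y^{j+1}=G_\mathbf x(\mathbf y^j)$ and $\tilde{\mathbf y}^{j+1}=G_{\tilde{\mathbf x}}(\tilde{\mathbf y}^j)$. By induction on $j$ I show that $y^j_n=\tilde y^j_n$. The base case is trivial. For the inductive step,
\[
y^{j+1}_n=\Gamma_n(x_n,y^j_n)=\Gamma_n(\tilde x_n,\tilde y^j_n)=\tilde y^{j+1}_n .
\]
Since $G_\mathbf x$ and $G_{\tilde{\mathbf x}}$ are contractions on the Banach space $Y_\infty$, the iterates converge in $\|\cdot\|_\infty$ to $\eta(\mathbf x)$ and $\eta(\tilde{\mathbf x})$ respectively. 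Convergence in the sup norm implies coordinatewise convergence, so
\[
\eta(\mathbf x)_n=\lim_j y^j_n=\lim_j\tilde y^j_n=\eta(\tilde{\mathbf x})_n .
\]

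An alternative, avoiding iteration, is a splicing argument in the spirit of Lemmas~\ref{lem: p x k depends only vk} and \ref{lem p x k depends only on xk}. I would define $\mathbf z$ to equal $\eta(\tilde{\mathbf x})$ off the $n$-th coordinate and $\eta(\mathbf x)_n$ at coordinate $n$. One checks that $G_{\tilde{\mathbf x}}(\mathbf z)=\mathbf z$, using the coordinatewise form and $x_n=\tilde x_n$. Uniqueness of the fixed point then gives $\mathbf z=\eta(\tilde{\mathbf x})$. I would present whichever is cleaner, probably the splicing one.

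The main point needing care is justifying the coordinatewise structure $G_\mathbf x(\mathbf y)_m=\Gamma_m(x_m,y_m)$ rigorously. For $g^s,g^u$ this is exactly what the Corollary yields, specialized to $\mathbf x=\mathbf 0$. For the composition one must note that $g^u(\Pi^u\mathbf y)_m$ depends only on $(\Pi^u\mathbf y)_m$, hence only on $y_m$, and then feed this into $p(\mathbf x,\cdot)_m$, which depends only on $x_m$ and the $m$-th entry. The $q$ term is handled symmetrically. With that in place, the remaining steps are routine.
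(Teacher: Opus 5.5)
Your Picard-iteration argument is exactly the paper's proof. The paper starts both iterations at $\mathbf 0$, shows $G_{\mathbf x}^m(\mathbf 0)_n=G_{\tilde{\mathbf x}}^m(\mathbf 0)_n$ by induction using that $G_{\mathbf x}(\mathbf y)_n$ depends only on $(x_n,y_n)$, and then passes to the limit coordinatewise.

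Your splicing alternative is also correct. The spliced $\mathbf z$ is bounded and is a fixed point of $G_{\tilde{\mathbf x}}$ coordinate by coordinate, so uniqueness gives $\mathbf z=\eta(\tilde{\mathbf x})$. It replaces the limit passage by uniqueness of the fixed point, in the style of Lemmas~\ref{lem: p x k depends only vk} and~\ref{lem p x k depends only on xk}, so either presentation is fine.
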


\begin{proof}
Since $G_{\cdot}$ is a contraction, we have 
\[
\eta(\mathbf x)=\lim_{m\to \infty}G_{\mathbf x}^m(\mathbf 0) \; \text{ and }\;  \eta(\tilde{\mathbf x})=\lim_{m\to \infty}G_{\tilde{\mathbf x}}^m(\mathbf 0).
\]
We now show that for each $m\in \mathbb N$, 
\begin{equation}\label{nn}
G_{\mathbf x}^m(\mathbf 0)_n=G_{\tilde{\mathbf x}}^m(\mathbf 0)_n.
\end{equation}
For $m=1$ the desired conclusion follows from the discussion that precedes the statement of the lemma. Assume that it holds for some $m\in \mathbb N$. Then 
\[
\begin{split}
G_{\mathbf x}^{m+1}(\mathbf 0)_n= (G_{\mathbf x} (G_{\mathbf x}^m(\mathbf 0))_n &=(G_{\mathbf x} (G_{\tilde{\mathbf x}}^m(\mathbf 0))_n  \quad \text{(since $G_{\mathbf x}^m(\mathbf 0)_n=G_{\tilde{\mathbf x}}^m(\mathbf 0)_n$)}\\
&=(G_{\tilde{\mathbf x}} (G_{\tilde{\mathbf x}}^m(\mathbf 0))_n \quad \text{(since $x_n=\tilde x_n$)}\\
&=G_{\tilde{\mathbf x}}^{m+1}(\mathbf 0)_n,
\end{split}
\]
yielding~\eqref{nn} for $m+1$. Thus, the claim follows by induction.

Consequently,
\[
\eta(\mathbf x)_n=\lim_{m\to \infty}G_\mathbf x^m(\mathbf 0)_n=\lim_{m\to \infty}G_{\tilde {\mathbf x}}^m(\mathbf 0)_n=\eta(\tilde{\mathbf x})_n.
\]
\end{proof}

For $n\in \mathbb Z$, we define $\eta_n \colon \mathbb R^d \to \mathbb R^d$ by 
\[
\eta_n(v):=\eta (\mathbf v)_n \quad \text{for $v\in \mathbb R^d$,}
\]
where $\mathbf v=(v_m)_{m\in \mathbb Z}\in Y_\infty$ is any such that $v_n=v$. Note that $\eta_n$ is well-defined due to Lemma \ref{lem eta n depend on xn}. Since $\eta$ is a homeomorphism, we find that $\eta_n$ is a homeomorphism for each $n$. Indeed, take $v, w\in \mathbb R^d$ such that $\eta_n(v)=\eta_n(w)$. Let $\mathbf v=(v_m)_{m\in \mathbb Z}, \mathbf w=(w_m)_{m\in \mathbb Z}\in Y_\infty$ be such that $v_n=v$, $w_n=w$, and $v_m=w_m=0$ for $m\neq n$.
Observe that $\eta(\mathbf v)=\eta(\mathbf w)$ as $\eta(\mathbf v)_m=\eta(\mathbf w)_m$ for each $m\in \mathbb Z$ (recall Lemma \ref{lem eta n depend on xn}). Hence, $\mathbf v=\mathbf w$ and $v=w$ and we conclude that $\eta_n$ is injective. Now, take an arbitrary $w\in \mathbb R^d$ and choose $\mathbf w=(w_m)_{m\in \mathbb Z}\in Y_\infty$ such that $w_n=w$. Since $\eta$ is onto, there is $\mathbf v=(v_m)_{m\in \mathbb Z}\in Y_\infty$ such that $\eta(\mathbf v)=\mathbf w$. Then
\[
w=w_n=\eta(\mathbf v)_n=\eta_n(v_n),
\]
showing that $\eta_n$ is onto.

Note that
\[
\eta(\mathbf  v)=(\eta_n(v_n))_{n\in \mathbb Z} \quad \text{for $\mathbf v=(v_n)_{n\in \mathbb Z}\in Y_\infty$,}
\]
and 
\[
\eta^{-1}(\mathbf  v)=(\eta_n^{-1}(v_n))_{n\in \mathbb Z} \quad \text{for $\mathbf v=(v_n)_{n\in \mathbb Z}\in Y_\infty$.}
\]
Since $\sigma\colon Y_\infty \to Y_\infty$ given by~\cite[Lemma 3.1]{BT} is the inverse of $\eta$, we immediately get the following result.
\begin{lemma}
For $\mathbf x, \tilde{\mathbf x}\in Y_\infty$ such that $x_n=\tilde x_n$ for some $n\in \Z$, we have $\sigma(\mathbf x)_n=\sigma(\tilde{\mathbf x})_n$.
\end{lemma}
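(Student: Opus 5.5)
The statement follows directly from what was just established for $\eta$, since $\sigma=\eta^{-1}$. Lemma~\ref{lem eta n depend on xn} and the discussion after it show that $\eta$ acts coordinatewise: there are bijections $\eta_n\colon\R^d\to\R^d$ with
\[
\eta(\mathbf v)=(\eta_n(v_n))_{n\in\Z} \quad \text{for } \mathbf v\in Y_\infty .
\]

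The first step is to identify $\sigma$ coordinatewise. Take any $\mathbf x\in Y_\infty$ and put $\mathbf y:=\sigma(\mathbf x)$. Since $\sigma$ is the inverse of $\eta$ (\cite[Lemma 3.1]{BT}), we have $\eta(\mathbf y)=\mathbf x$. Reading this in the $n$-th coordinate with the representation above gives $\eta_n(y_n)=x_n$ for every $n$. Because $\eta_n$ is a bijection, this yields
\[
\sigma(\mathbf x)_n=y_n=\eta_n^{-1}(x_n), \quad n\in\Z .
\]
So the $n$-th coordinate of $\sigma(\mathbf x)$ is a function of $x_n$ alone.

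The second step is the conclusion. If $x_n=\tilde x_n$, then
\[
\sigma(\mathbf x)_n=\eta_n^{-1}(x_n)=\eta_n^{-1}(\tilde x_n)=\sigma(\tilde{\mathbf x})_n .
\]

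There is no real obstacle here. The only point to check is that $\mathbf y=\sigma(\mathbf x)$ lies in $Y_\infty$, so that the coordinatewise formula for $\eta$ applies to it. This holds because $\sigma$ maps $Y_\infty$ into $Y_\infty$.
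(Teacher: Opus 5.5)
Your proof is correct and follows essentially the same approach as the paper. The paper also obtains the lemma immediately from $\sigma=\eta^{-1}$ and the coordinatewise formula $\eta^{-1}(\mathbf v)=(\eta_n^{-1}(v_n))_{n\in\Z}$; you just spell out the short verification of that formula using the bijectivity of $\eta_n$.
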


We define $\mathbb F_{-}\colon Y_\infty^s \to Y_\infty^s$ and $\mathbb F_+\colon Y_\infty^u \to Y_\infty^u$ by 
\begin{equation*}\label{F1}
\mathbb F_{-}:=\Pi^s \circ \mathbb F\circ (\Id_{Y_\infty^s}+g^s)
\end{equation*}
and 
\begin{equation*}\label{F2}
\mathbb F_+:=\Pi^u \circ \mathbb F\circ (\Id_{Y_\infty^u}+g^u).
\end{equation*}
Then, \cite[Lemma 3.2]{BT} gives that
\begin{equation}\label{lin1}
\eta\circ \mathbb F=(\mathbb F_{-}\circ \Pi^s+\mathbb F_+\circ \Pi^u)\circ \eta
\end{equation}
and
\[
\mathbb F\circ \sigma=\sigma \circ (\mathbb F_{-}\circ \Pi^s+\mathbb F_+\circ \Pi^u).
\]
We note that $\mathbb F_{\pm}(\mathbf x)_n$ depends only on $x_{n-1}$ and $D\mathbb F_{-}(\mathbf 0)=\mathbb A_-^\ast:=\mathbb A^\ast\rvert_{Y_\infty^s}$ and $D\mathbb F_+(\mathbf 0)=\mathbb A_+^\ast:=\mathbb A^\ast\rvert_{Y_\infty^u}$.

\subsection{Linearization of a contractive system}\label{sec: linear contraction}
We now turn our attention to the linearization of contractive systems. We will follow the idea developed in \cite[Sections 5 and 7]{ZZJ}. The case of expanding systems can be treated in an analogous manner.

Let us assume that the linear part $(A_n)_{n\in \mathbb Z}$ admits an exponential contraction (that is, an exponential dichotomy with $P_n=\Id$ for every $n\in\Z$) and that the corresponding Sacker-Sell spectrum is of the form (recall Remark \ref{remark: diff def spect})
\[
a_1\le b_1<a_2\le b_2\le \ldots \le a_k\le b_k <0.
\]

\subsubsection{Local linearization}
We can assume that $A_n$, $n\in \mathbb Z$ are of the block-diagonal form $A_n=\diag (A_n^1, \ldots, A_n^k)$, $n\in \mathbb  Z$, where the Sacker-Sell spectrum of $(A_n^i)_{n\in \mathbb Z}$ is given by $[a_i, b_i]$ for each $i\in \{1, \ldots, k\}$ (apply~\cite[Theorem 3.11]{CZSX} to the standard dichotomy spectrum).  Assume that the dimension of the block $A_n^i$ is $d_i$ for $i\in \{1, \ldots, k\}$. Notice that the decomposition $\mathbb R^d=\bigoplus_{i=1}^k \mathbb R^{d_i}$ induces the decomposition
\[
Y_\infty=\bigoplus_{i=1}^k Y_\infty^i,
\]
where $Y_\infty^i$ consists of all $\mathbf x=(x_n)_{n\in \mathbb Z}\in Y_\infty$ with $x_n\in \mathbb R^{d_i}$ for each $n$.

Now, the general strategy in the proof of the main result in \cite{ZZJ} consists in finding a sequence of $C^1$ diffeomorphisms $(\Phi_i)_{i=1}^k$ such that $\Phi_i$ is a conjugacy between $\mathbb F_i$ and $\mathbb F_{i-1}$ where $\mathbb F_k= \mathbb F$ and $\mathbb F_0=\mathbb A^\ast$ and, for $i=1,\ldots,k-1 $, each $\mathbb F_i$ has the form $\mathbb F_i(\mathbf x)=(F_n^i(x_n))_{n\in \Z}$ for $\mathbf x=(x_n)_{n\in \Z}\in Y_\infty$ with 
\[F_n^i(x)=\begin{pmatrix}
    A^1_nx^1&+&f^1_{in}(x^1,x^2,\ldots,x^k)\\
    A^2_nx^2&+&f^2_{in}(x^1,x^2,\ldots,x^k)\\
    &\vdots & \\
    A^i_nx^i&+&f^i_{in}(x^1,x^2,\ldots,x^k)\\
    A^{i+1}_nx^{i+1}&&\\
    &\vdots&\\
    A^k_nx^k&&     
\end{pmatrix}\]
where $x=(x^1,x^1,\ldots,x^k)\in \oplus_{i=1}^k \mathbb R^{d_i}$ and $f^j_{in}\colon \R^d\to \R^{d_j}$. Then, considering the composition $\Phi:=\Phi_1\circ\ldots\circ\Phi_k$, we get a conjugacy between $\mathbb F$ and $\mathbb A$. The objective in the sequel is to observe that $\Phi$ has the property that its $n$-th coordinate depends only on the $n$-th coordinate of the argument.

As explained in the proof of~\cite[Lemma 10]{ZZJ}, the induction step starts from $\mathbb F_k:=\mathbb F$, i.e.,
\[
\mathbb F_k(\mathbf x)=(F_{n-1}(x_{n-1}))_{n\in \mathbb Z}, \quad \mathbf x=(x_n)_{n\in \mathbb Z}\in Y_\infty.
\]
By writing a point in $x\in \mathbb R^d$ as a pair of $(u, v)$ with $u\in \bigoplus_{i=1}^{k-1}\mathbb R^{d_i}$ and $v\in \mathbb R^{d_k}$, we can write each $F_n$ as 
\[
F_n(u, v)=(\diag (A_n^1, \ldots, A_n^{k-1})u+\tilde{f}_n^a(u, v), A_n^k v+\tilde{f}_n^b(u, v)),
\]
where $\tilde{f}_n^*=\pi^* \circ f_n$, $*=a, b$ and $\pi^a\colon \mathbb R^d\to \bigoplus_{i=1}^{k-1}\mathbb R^{d_i}$ and $\pi^b\colon \mathbb R^d \to \mathbb R^{d_k}$ are projections associated with the splitting $\mathbb R^d=(\bigoplus_{i=1}^{k-1}\mathbb R^{d_i})\oplus \mathbb R^{d_k}$. 

Consequently, each point $\mathbf x\in Y_\infty$ can be written as $(\mathbf u, \mathbf v)$, where $\mathbf u\in \bigoplus_{i=1}^{k-1}Y_\infty^i$ and $\mathbf v\in Y_\infty^k$ and
\[
\mathbb F_k(\mathbf u, \mathbf v)=(F_{n-1}(u_{n-1}, v_{n-1}))_{n\in \mathbb Z}, \quad \mathbf u\in \bigoplus_{i=1}^{k-1}Y_\infty^i, \ \mathbf v\in Y_\infty^k.
\]
Recall the map $\Psi_k$ defined in~\cite[Section 5]{ZZJ} which is given by
\[
\Psi_k:=\lim_{m\to \infty}(\mathbb A^\ast_k)^{-m}\Pi^b\mathbb F_k^m,
\]
where $\Pi^b \colon Y_\infty \to Y_\infty^k$ is the projection given by $Y_\infty \ni (x_n)_{n\in \mathbb Z}\mapsto (\pi^b x_n)_{n\in \mathbb Z}$, and $\mathbb A_k^\ast:=\mathbb A^\ast \rvert_{Y_\infty^k}$.
Since $\mathbb F_k^m(\mathbf u, \mathbf v)_n$ depends only on $(u_{n-m}, v_{n-m})$ and $((\mathbb A^\ast_k)^{-m}\mathbf x)_n$ depends only on $x_{n+m}$, we conclude that $\Psi_k(\mathbf u, \mathbf v)_n$ depends only on $(u_n, v_n)$. The same applies then to the map $\Phi_k(\mathbf u, \mathbf v):=(\mathbf u, \Psi_k(\mathbf u, \mathbf v))$ introduced in the proof of~\cite[Lemma 10]{ZZJ}.

Let $\mathbb F_{k-1}$ be given by~\cite[Eq. (5.14)]{ZZJ} (for $\ell=k$):
\[
\mathbb F_{k-1}:=\Phi_k \circ \mathbb F_{k}\circ \Phi_k^{-1},
\]
defined locally around $\mathbf 0\in Y_\infty$. Due to the property of $\Phi_k$ mentioned above (which also holds for its local inverse), $\mathbb F_{k-1}(\mathbf x)_n$ depends only on $x_{n-1}$.
Writing a point $\mathbf x$ as a triple $(\mathbf u, \mathbf v, \mathbf w)$ with $\mathbf u\in \bigoplus_{i=1}^{k-2}Y_\infty^i$, $\mathbf v\in Y_\infty^{k-1}$ and $\mathbf w\in Y_\infty^k$, we have (see the proofs of~\cite[Lemma 5 and Lemma 10]{ZZJ}) that 
\[
\begin{split}
\mathbb F_{k-1}(\mathbf u, \mathbf v, \mathbf w)
&=(\diag (A_{n-1}^1, \ldots, A_{n-1}^{k-2})u_{n-1}+\bar f_{n-1}^a(u_{n-1}, v_{n-1}, w_{n-1}), A_{n-1}^{k-1}v_{n-1}
\\
&\quad +\bar f_{n-1}^b(u_{n-1}, v_{n-1}, w_{n-1}), A_{n-1}^k w_{n-1})_{n\in \mathbb Z},
\end{split}
\]
where $\bar f_n^a \colon \mathbb R^d \to \bigoplus_{i=1}^{k-2}\mathbb R^{d_i}$ and $\bar f_n^b\colon \mathbb R^d \to \mathbb R^{d_{k-1}}$, $n\in \mathbb Z$ with $\bar f_n^*(0)=0$ and $D\bar f_n^*(0)=0$ for each $n\in \mathbb Z$, $*=a, b$.

Notice that the Sacker-Sell spectra of $(\diag (A_{n-1}^1, \ldots, A_{n-1}^{k-2}))_{n\in \mathbb Z}$, $(A_{n-1}^{k-1})_{n\in \mathbb Z}$, and $(A_n^k)_{n\in \mathbb Z}$ are given by $[a_1, b_1]\cup \ldots \cup [a_{k-2}, b_{k-2}]$, $[a_{k-1}, b_{k-1}]$, and $[a_k, b_k]$, respectively. 

From the discussion in the preceding paragraph, we may assume that 
\[
\|D\mathbb F_{k-1}^{-1}(\mathbf 0)\rvert_{\mathcal W}\|\le 1/(e^{a_k}-\delta) \quad \text{and} \quad \|D\mathbb F_{k-1}(\mathbf 0)\rvert_{\mathcal U\times \mathcal V}\|\le e^{b_{k-1}}+\delta,
\]
with $\delta>0$ sufficiently small where $\mathcal U$, $\mathcal V$ and $\mathcal W$ are subspaces of $Y_\infty$ consisting of sequences in $\bigoplus_{i=1}^{k-2}\mathbb R^{d_i}$, $\mathbb R^{d_{k-1}}$ and $\mathbb R^{d_k}$, respectively.

The idea is now to modify the nonlinear parts
of $\mathbb F_{k-1}$ outside a neighborhood of $0$ by modifying the nonlinear parts of each $(\mathbb F_{k-1})_n$, $n\in\mathbb{Z}$ on $\mathbb{R}^d$, that is, by modifying the maps $(u, v, w)\mapsto (\bar f_{n-1}^a(u, v, w), \bar f_{n-1}^b(u, v, w), 0)$.
Then, for the associated modification $\tilde{\mathbb F}_{k-1}$ (which coincides with $\mathbb F_{k-1}$ on a neighborhood of $\mathbf 0$), we can apply \cite[Theorem 2]{dLW} to $\tilde{\mathbb F}_{k-1}^{-1}$.
The associated invariant manifolds will have the same structure as the invariant manifolds discussed in Section \ref{sec: foliations}, 
that is, they can be represented as
\[
\mathcal M=\{(\mathbf u, \mathbf v, \mathbf w): \ \mathbf u=h_1(\mathbf w), \ \mathbf v=h_2(\mathbf w)\},
\]
where $h_i(\mathbf w)_n$ depends only on $w_n$ for $i=1, 2$. Indeed, by~\cite[Theorem 2.1]{dLW} 
\[
\mathcal M=\left \{\mathbf x\in Y_\infty: \ \sup_{m\ge 0}(\|\mathbb F_{k-1}^{-m}(\mathbf x)\|q^{-m})<+\infty \right \},
\]
for some $q>1$. Take now $\mathbf w^i$, $i=1, 2$ in $\mathcal W$ and $k\in \mathbb Z$ such that $w_k^1=w_k^2$. We define $\mathbf x=(x_n)_{n\in \mathbb Z}\in Y_\infty$ by
\[
x_n=\begin{cases}
(h_1(\mathbf w^1)_n, h_2(\mathbf w^1)_n, w_n^1) & n\neq k;\\
(h_1(\mathbf w^2)_n, h_2(\mathbf w^2)_n, w_n^2) & n= k.\\
\end{cases}
\]
Arguing as in the proof of Lemma~\ref{lem: p x k depends only vk}, we find that $\mathbf x\in \mathcal M$, which then easily implies that $h_i(\mathbf w^1)_k=h_i(\mathbf w^2)_k$ for $i=1, 2$. Hence, $h_i(\mathbf w)_n$ depends only on $w_n$ for $i=1, 2$.
Consequently, $\Theta$ given by~\cite[Eq. (5.21)]{ZZJ} has the property that $\Theta(\mathbf x)_n$ depends only on $x_n$. Let
\[
\tilde{\mathbb F}_{k-1}:=\Theta\circ \bar{\mathbb F}_{k-1}\circ \Theta^{-1}.
\]
By the previous discussion $\bar{\mathbb F}_{k-1}(\mathbf x)_n$ depends only on $x_{n-1}$. 

Next, let
\[
\tilde{\Psi}_{k-1}:=\lim_{m\to \infty}\mathbb B^{-m}\Pi^v\tilde{\mathbb F}_{k-1}^m,
\]
where
$
(\mathbb B\mathbf v)_n=A_{n-1}^{k-1}v_{n-1}.
$
Observe that $\tilde{\Psi}_{k-1}(\mathbf x)_n$ depends only on $x_n$. The same comment applies to
\[
\Psi_{k-1}:=\tilde{\Psi}_{k-1}\circ \Theta
\]
and for
\[
\Phi_{k-1}(\mathbf u, \mathbf v, \mathbf w):=(\mathbf u, \Psi_{k-1}(\mathbf u, \mathbf v, \mathbf w), \mathbf w).
\]
We now set 
\[
\mathbb F_{k-2}:=\Phi_{k-1}\circ \mathbb F_{k-1}\circ \Phi_{k-1}^{-1}
\]
and continue with the above procedure as indicated in the proof of~\cite[Lemma 5]{ZZJ} to obtain $\Phi_{k-2}$ up to $\Phi_1$. Moreover, as indicated in the proof of~\cite[Lemma 10]{ZZJ}, the \emph{local} linearization  is given as the composition 
\[
\Phi:=\Phi_1\circ\cdots\circ\Phi_k,
\]
where each $\Phi_i$, $i=1,...,k$, have the property that  the $n$-th coordinate of the image depends only on the $n$-th coordinate of the argument. In particular, $\Phi$ also has this property.

\subsubsection{From local to global linearization}\label{sec: global linearization}

Our objective now is to extend the local linearization $\Phi$ from a neighborhood $U\subset Y_\infty$ to the whole space $Y_\infty$. We start noticing that, by \eqref{Df x close A}, for $\eta$ sufficiently small, $\mathbb{F}$ is a contraction. Moreover, by \eqref{fzero}, $\mathbb F(\mathbf 0)=\mathbf 0$. Thus, it is possible to choose a small neighborhood $U_0\subset U$ of origin such that $\mathbb{F} (U_0)\subset U_0$ and 
$U_0\subset {\rm int}\, \mathbb{F}^{-1}(U_0)\subset U$,
where ${\rm int} \, \mathbb{F}^{-1}(U_0)$ denotes the interior of the set $\mathbb{F}^{-1}(U_0)$. In particular,
\begin{equation}\label{eq: lin on U0}
    \mathbb A^\ast \circ \Phi (\mathbf x)=\Phi \circ \mathbb F(\mathbf x) \; \text{ for every } \mathbf{x}\in U_0.
\end{equation}
Then, define
\[
X_i:=\mathbb{F}^{-(i+1)}(U_0)\backslash \mathbb{F}^{-i}(U_0),\quad V_0:=\Phi(U_0),\quad Z_i:=(\mathbb{A}^\ast)^{-(i+1)} (V_0)\backslash (\mathbb{A}^\ast)^{-i}(V_0)
\]
for all $i\in \Z^+$.
It is clear that
\begin{align*}
X_i\cap X_j=\emptyset, ~~ \forall i\ne j,\quad ~~ X_i\cap U_0=\emptyset, \quad &U_0\cup \bigcup_{i=0}^{\infty}X_i=Y_\infty,~ ~\mathbb{F}^{-1}(X_i)=X_{i+1}
\end{align*}
and
\begin{align*}
Z_i\cap Z_j=\emptyset,  ~~\forall i\ne j, \quad  ~~Z_i\cap V_0=\emptyset,  \quad &V_0\cup \bigcup_{i=0}^{\infty}Z_i=Y_\infty,   ~~(\mathbb{A}^\ast)^{-1}(Z_i)=Z_{i+1}.
\end{align*}
Then we define the global conjugacy $\Phi_*\colon Y_\infty\to Y_\infty$ by
\[
\Phi_*(\mathbf x):=
\left\{\begin{array}{lll}
\Phi(\mathbf x), & \quad\forall \mathbf x\in U_0,
\\
((\mathbb A^\ast )^{-(i+1)}\circ \Phi \circ \mathbb F^{i+1}) (\mathbf x), & \quad\forall \mathbf x\in X_i, ~~\forall i\in \Z^+.
\end{array}\right.
\]

In order to show that $\Phi_*$ is a $C^1$ diffeomorphism, we note that 
\begin{equation}\label{eq: aux F-1 U0}
\Phi_*(\mathbf x)=((\mathbb A^\ast)^{-1}\circ \Phi \circ \mathbb F) (\mathbf x), \quad \forall \mathbf x\in \mathbb F^{-1}(U_0).
\end{equation}
In fact, the above equality clearly holds for $\mathbf x\in \mathbb F^{-1}(U_0)\setminus U_0=X_0$ by the definition of $\Phi_*$. Moreover, for $\mathbf x\in U_0$, using \eqref{eq: lin on U0}, we have
\[
\Phi_*(\mathbf x)=\Phi(\mathbf x)=((\mathbb A^\ast)^{-1}\circ \Phi \circ \mathbb F)(\mathbf x)
\]
which proves \eqref{eq: aux F-1 U0}.
Similarly,
\[
\Phi_*(\mathbf x)=((\mathbb A^\ast)^{-2}\circ \Phi \circ \mathbb F^2) (\mathbf x) \quad \forall \mathbf x\in \mathbb F^{-2}(U_0).
\]
Indeed, the above equality holds for $\mathbf x\in X_1$ by the definition of $\Phi_*$. Now, for $\mathbf x\in \mathbb F^{-1}(U_0)$, by \eqref{eq: aux F-1 U0}, we know that
\[
\Phi_*(\mathbf x)=((\mathbb A^\ast)^{-1}\circ \Phi \circ \mathbb F)(\mathbf x).
\]
On the other hand, for $\mathbf x\in \mathbb F^{-1}(U_0)$ we have that $\mathbb F(\mathbf x)\in U_0$. Thus, by \eqref{eq: lin on U0} applied to $\mathbb F(\mathbf{x})$,
\[\Phi\circ \mathbb F(\mathbf x)=((\mathbb A^\ast)^{-1}\circ \Phi \circ \mathbb F)(\mathbb F(\mathbf x)).\]
Therefore, combining these two observations, we conclude that
\[
\Phi_*(\mathbf x)=((\mathbb A^\ast)^{-2}\circ \Phi \circ \mathbb F^2)(\mathbf x)
\] 
as claimed.
Proceeding in the same manner, we find that 
\begin{equation}\label{1013}
\Phi_*(\mathbf x)=((\mathbb A^\ast)^{-i}\circ \Phi \circ \mathbb F^i)(\mathbf x), \quad \forall \mathbf x\in \mathbb F^{-i}(U_0), \ \forall i\in \Z^+ .
\end{equation}
This implies that $\Phi_*$ is $C^1$ on each of the open sets 
$\mathbb F^{-i}(U_0)$, $i\in \Z^+$ whose union is the entire $Y_\infty$. Consequently, $\Phi_*$ is $C^1$ on $Y_\infty$. Moreover, by \eqref{1013}, 
\[
\Phi_*\circ \mathbb{F}^{-i}(U_0)= ((\mathbb A^\ast)^{-i}\circ \Phi)(U_0)= (\mathbb A^\ast)^{-i}(V_0).
\]
Thus, $\Phi_*(X_i)=Z_i$ for all $i\in \Z^+$ which implies that $\Phi_*\colon Y_\infty\to Y_\infty$ is one-to-one and, therefore, a $C^1$ diffeomorphism. Furthermore, it follows easily from the previous observations that $\mathbb A^\ast \circ \Phi_\ast =\Phi_\ast \circ \mathbb F$. In particular, $\Phi_\ast$ is a (global) $C^1$ linearization of $\mathbb F$.

Finally, we note that $\Phi_*$ has the property that $\Phi_* (\mathbf x)_n$ depends only on $x_n$. Indeed, take $\mathbf x, \mathbf y\in Y_\infty$ with $x_n=y_n$ and let $k\in \mathbb \Z^+$ be such that $\mathbb F^k(\mathbf x)$ and $\mathbb F^k(\mathbf y)$  belong to $U_0$ (recall that $\mathbb F$ is a contraction with $\mathbb F(\mathbf 0)=\mathbf 0$). By~\eqref{1013}, we have 
\begin{equation}\label{eq: Phi ast n coord}
\Phi_*(\mathbf x)=((\mathbb A^\ast)^{-k}\circ \Phi \circ \mathbb F^k)(\mathbf x) \;\text{ and }\; 
\Phi_*(\mathbf y)=((\mathbb A^\ast)^{-k}\circ \Phi \circ \mathbb F^k)(\mathbf y).
\end{equation}
Thus, since $\mathbb F^k (\mathbf x)_{n+k}=\mathbb F^k(\mathbf y)_{n+k}$, and recalling that $\mathbb A$ (resp. $\Phi$) have the property that the $j$-th coordinate of the image depends only on the $(j-1)$-th (resp. $j$-th) coordinate of the argument, it follows by \eqref{eq: Phi ast n coord} that $\Phi_* (\mathbf x)_{n}=\Phi_* (\mathbf y)_{n}$ as claimed.

\subsection{Conclusion} Let $\mathbb A^\ast_-, \mathbb A^\ast_+, \mathbb F_-$ and $\mathbb F_+$ be as in the end of Section \ref{splitting}. Note that, by \eqref{Df x close A}, for $\eta$ sufficiently small, $\mathbb F_-$ is a contractive system, while $\mathbb F_+$ is an expanding system. Thus, from the discussion in Section \ref{sec: linear contraction} it follows that there exist $C^1$ diffeomorphisms $\Phi_{-}\colon Y_\infty^s \to Y_\infty^s$ and $\Phi_+\colon Y_\infty^u \to Y_\infty^u$ such that 
\[
\mathbb A^\ast_-\circ \Phi_-=\Phi_-\circ \mathbb F_{-} \quad \text{and} \quad \mathbb A^\ast_+\circ \Phi_+=\Phi_+\circ \mathbb F_{+}.
\]
In addition, $\Phi_{\pm}(\mathbf x)_n$ depends only on $x_n$.

Setting 
\begin{equation}\label{eq: def Phi final}
\Phi :=(\Phi_-\circ \Pi^s+\Phi_+\circ \Pi^u)\circ \eta,
\end{equation}
where $\eta$ is as in Section \ref{splitting}, we find that $\Phi \colon Y_\infty\to Y_\infty$ is a homeomorphism such that (recall \eqref{lin1})
\[
\Phi \circ \mathbb F=\mathbb A^\ast \circ \Phi.
\]
Moreover, $\Phi(\mathbf x)_n$ depends only on $x_n$ once every object involved in the definition of $\Phi$ has this property (as observed in Sections \ref{splitting} and \ref{sec: linear contraction}).  In addition, $\Phi$ is of class $C^1$ as $\Phi_{\pm}$ and $\eta$ are of class $C^1$.

For $n\in \mathbb Z$, let $h_n\colon \R^d \to \R ^d$ be given by 
\[
h_n(v):=\Phi(\mathbf v^n)_n,
\]
where $\mathbf v^n=(v_m^n)_{m\in \mathbb Z}$ is given by $v_m^n=0$ for $m\neq n$ and $v_n^n=v$. Note that, by construction, the conjugacy $\Phi$ given by \eqref{eq: def Phi final} coincides with the map $\Phi$ used in the proof of Theorem \ref{Ap} that comes from \cite[Appendix A]{DZZ}. In particular, $h_n$ given above coincide with $h_n$ given in \eqref{eq: def hn}. Thus, all we have to do to finish the proof of Proposition \ref{prop: hn is homeo} is to show that each $h_n$ is a bijection. We start showing that $h_n$ is injective. 

Let $v, w\in \R^d$ be such that $h_n(v)=h_n(w)$. Then $\Phi(\mathbf v^n)_n=\Phi(\mathbf w^n)_n$. On the other hand, since $v_m^n=w_m^n=0$ for $m\neq n$, we have $\Phi(\mathbf v^n)_m=\Phi(\mathbf w^n)_m=0$ for each $m\neq n$ (recall that $\Phi(\mathbf x)_n$ depends only on $x_n$). Hence, $\Phi(\mathbf v^n)=\Phi(\mathbf w^n)$, and thus, since $\Phi$ is injective, $\mathbf v^n=\mathbf w^n$. Consequently,  $v=v_n^n=w_n^n=w$ and $h_n$ is injective. Let us now show that $h_n$ in also onto. Take an arbitrary $w\in \R^d$ and define $\mathbf w^n$ as above. Then, since $\Phi$ is onto, there is $\mathbf y\in Y_\infty$ such that $\Phi(\mathbf y)=\mathbf w^n$. Thus, taking $v:=y_n$ we have that $\Phi(\mathbf y)_n=\Phi(\mathbf v^n)_n$, which implies that $w=h_n(v)$ and $h_n$ is onto as claimed. This finishes the proof of Proposition \ref{prop: hn is homeo}.

\begin{remark}\label{855rem}
In the context of the version of Theorem~\ref{t1} for two-sided sequences (which is needed to prove Theorem~\ref{t1}), the only difference is that $p$ and $q$ (and consequently by~\cite[Lemma 3.1]{BT} also $\eta$) are locally H\"older continuous and differentiable in origin (see~\cite[Lemma 4]{DZZ20}).
\end{remark}



\medskip{\bf Acknowledgments.}
The authors are ranked alphabetically, and their contributions should be treated equally. L. Backes was partially supported by a CNPq-Brazil PQ fellowship under Grants No. 307633/2021-7 and 304806/2024-2; D. Dragi\v cevi\'c was funded by the European Union-NextGenerationEU-StatDinMatAn-uniri-iz-25-108
; W. Zhang was partially supported by  NSF-CQ \# CSTB2023NSCQ-JQX0020, NSFC \# 12271070.

\medskip {\bf Competing interests} The authors declare no competing interests.

\medskip{\bf Author Contributions.}
All authors  made substantial contributions 
 to this research study and to writing this paper.

 \medskip{\bf Data Availability}
 Datasets were not generated or analyzed during the current study.


\end{document}